\newtheorem{proposition}{Proposition}
\newtheorem{corollary}[proposition]{Corollary}
\newtheorem{lemma}[proposition]{Lemma}
\newtheorem{theorem}[proposition]{Theorem}
\newtheorem*{conjecture*}{Conjecture}
\newtheorem*{theorem*}{Theorem}
\newtheorem*{corollary*}{Corollary}
\newtheorem*{proposition*}{Proposition}
\newtheorem*{lemma*}{Lemma}
\theoremstyle{definition}
\newtheorem{definition}[proposition]{Definition}
\newtheorem{convention}[proposition]{Convention}
\newtheorem*{definition*}{Definition}
\newtheorem*{construction*}{Construction}
\theoremstyle{remark}
\newtheorem{remark}[proposition]{Remark}
\newtheorem*{remark*}{Remark}
\newtheorem{question}[proposition]{Question}
\newtheorem{example}[proposition]{Example}
\newtheorem*{example*}{Example}
\newcommand{\id}{\operatorname{id}}
\newcommand{\Z}{\mathbb{Z}}
\def\C{\mathbb C}
\newcommand{\N}{\mathbb{N}}
\newcommand{\Q}{\mathbb{Q}}
\let\scr=\mathcal
\let\bb=\mathbb
\def\P{\bb P}
\def\R{\bb R}
\newcommand{\1}{\mathbbm{1}}
\newcommand{\eff}{{\text{eff}}}
\newcommand{\SH}{\mathcal{SH}}
\DeclareMathOperator*{\colim}{colim}
\let\lim=\relax
\DeclareMathOperator*{\lim}{lim}
\def\Map{\mathrm{Map}}
\def\NAlg{\mathrm{NAlg}}
\def\Span{\mathrm{Span}}
\def\Spc{\mathcal{S}\mathrm{pc}{}}
\def\Fun{\mathrm{Fun}}
\newcommand{\wequi}{\simeq}
\newcommand{\Mod}{\text{-}\mathcal{M}od}
\DeclareRobustCommand{\ul}{\underline}
\newcommand{\tr}{\mathrm{tr}}
\let\cat=\mathrm
\def\Sm{{\cat{S}\mathrm{m}}}
\def\FEt{\mathrm{FEt}{}}
\def\mot{\mathrm{mot}}
\def\ph{\mathord-}
\newcommand{\NB}[1]{\todo[color=gray!40]{#1}}
\newcommand{\TODO}[1]{\todo[color=red]{#1}}
\newcommand{\tombubble}[1]{\todo[color=green!40]{#1}}
\newcommand{\tom}[1]{{\color{green!60!black}#1}}
\newcommand{\jeremy}[1]{{\color{blue!60!black}#1}}
\newcommand{\jeremybubble}[1]{\todo[color=blue!40]{#1}}
\newcommand{\NB}[1]{}
\newcommand{\TODO}[1]{}
\newcommand{\tombubble}[1]{}
\newcommand{\tom}[1]{}
\newcommand{\jeremy}[1]{}
\newcommand{\jeremybubble}[1]{}
\renewcommand{\todo}[1]{}
\def\MGL{\mathrm{MGL}}
\def\MU{\mathrm{MU}}
\def\BP{\mathrm{BP}}
\def\BPGL{\mathrm{BPGL}}
\def\H{\mathrm{H}}
\def\Tr{\mathrm{Tr}}
\def\Res{\mathrm{Res}}
\def\N{\mathrm{N}}
\def\B{\mathrm{B}}
\def\free{\mathrm{free}}
\def\imap{\ul{\mathrm{map}}}
\def\EE{\mathbb{E}}
\def\fet{\mathrm{fet}}
\def\all{\mathrm{all}}
\def\cd{\mathrm{cd}}
\newcommand{\fpsr}[1]{[\![ #1 ]\!]}
\numberwithin{proposition}{section}
\title{Nilpotence in normed $\MGL$-modules}
\date{\today}
\author{Tom Bachmann}
\address{Department of Mathematics, Massachusetts Institute of Technology, Cambridge, MA, USA}
\email{tom.bachmann@zoho.com}
\author{Jeremy Hahn}
\address{Department of Mathematics, Massachusetts Institute of Technology, Cambridge, MA, USA}
\email{jhahn01@mit.edu}
\begin{document}
\maketitle

\begin{abstract}
We establish a motivic version of the May Nilpotence Conjecture: if $E \in \NAlg(\SH(S))$ satisfies $E \wedge \H\Z \wequi 0$, then also $E \wedge \MGL \wequi 0$.
In words, motivic homology detects vanishing of normed modules over the algebraic cobordism spectrum.
\end{abstract}

\tableofcontents

\section{Introduction}
A guiding principle of stable homotopy theory is the celebrated Nilpotence Theorem of Devinatz, Hopkins, and Smith \cite{devinatz1988nilpotence}:

\begin{theorem*}[DHS Nilpotence]
Suppose $R$ is a homotopy ring spectrum.  Then an element $x \in \pi_*(R)$ is nilpotent if and only if its Hurewicz image in $MU_*(R)$ is nilpotent.
\end{theorem*}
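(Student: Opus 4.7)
The plan is to reduce the theorem to a smash-nilpotence statement for maps out of spheres, and then detect that nilpotence chromatically via successive approximations to $\MU$ by Brown--Peterson spectra and their truncations.

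First, I would carry out the standard reformulation in terms of smash products. For a homotopy ring spectrum $R$, an element $x : S^d \to R$ is nilpotent in $\pi_*(R)$ if and only if some iterated smash power $x^{\wedge N} : S^{Nd} \to R^{\wedge N} \to R$ (composed with the multiplication on $R$) is null. The Hurewicz image of $x$ in $\MU_*(R)$ is nilpotent if and only if some such $x^{\wedge N}$ becomes null after smashing with $\MU$. Hence the theorem is equivalent to the following ``smash nilpotence'' assertion: for any finite spectrum $W$ and any map $\alpha : S^d \to W$ whose image in $\MU_*(W)$ vanishes, some smash power of $\alpha$ is null.

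Next, working $p$-locally and using that $\MU_{(p)}$ is a wedge of suspensions of $\BP$, I would replace $\MU$ by $\BP$ and filter $\BP$ by a chromatic tower whose associated graded is governed by the Morava $K$-theories $K(n)$. Running an induction on chromatic height, I would use $v_n$-self-maps on finite type-$n$ complexes to reduce smash-nilpotence of $\alpha$ to the following layered assertion: if $\alpha$ vanishes in $K(n)$-homology on a type-$n$ complex $F$, then the induced self-map of $F \wedge W$ is $v_n$-nilpotent. Combined with the chromatic convergence of finite $p$-local spectra, the layer-wise data assembles to genuine smash-nilpotence of $\alpha$.

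The main obstacle is precisely this chromatic detection step: showing that $K(n)_*$-triviality of a self-map on a type-$n$ spectrum forces $v_n$-nilpotence. This is the technical heart of the Devinatz--Hopkins--Smith argument and rests on establishing a vanishing line of positive slope on the Adams--Novikov $E_2$-page of such spectra, obtained through careful analysis of the cobar complex of the Hopf algebroid $(\BP_*, \BP_*\BP)$ and its behavior on the monomial ideals $I_n = (p, v_1, \dots, v_{n-1})$. The global assembly of the chromatic pieces, via the Hopkins--Ravenel smash product theorem in its finite-complex incarnation, is then essentially formal.
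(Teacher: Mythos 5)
The paper does not prove this theorem; it cites it from \cite{devinatz1988nilpotence} as background, so there is no internal proof to compare against. That said, your sketch has a serious circularity and misrepresents the shape of the actual argument.

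The reduction to smash nilpotence of a map out of a sphere and the $p$-local passage from $\MU$ to $\BP$ are both correct and standard. But the ``chromatic detection'' step you propose invokes, as black boxes, results that are all downstream of the Nilpotence Theorem itself. The existence of $v_n$-self-maps on finite type-$n$ complexes is the Periodicity Theorem of Hopkins--Smith, which rests on the Thick Subcategory Theorem, which is a corollary of Nilpotence. Chromatic convergence for finite $p$-local spectra is due to Hopkins--Ravenel and is proved via the smash product theorem, which again relies on Nilpotence. And the assertion that $K(n)_*$-triviality of a self-map of a type-$n$ complex forces $v_n$-nilpotence is itself one of the main consequences of the Nilpotence Theorem developed by Hopkins--Smith. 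As written, the plan assumes what it sets out to prove.

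The actual Devinatz--Hopkins--Smith proof does not pass through Morava $K$-theories or the chromatic tower. Its essential device is the interpolating sequence of Thom spectra $X(n)$ associated to $\Omega SU(n) \to BU$, with $X(1) = S^0$ and $X(\infty) = \MU$. One shows that the property ``$X(n)_*$ detects nilpotence'' propagates along this tower, using the James filtration to break $X(n+1)$ into pieces as an $X(n)$-module, certain Thom-spectrum self-maps of those pieces, and a vanishing-line estimate in the $X(n)$-based Adams spectral sequence --- not in the $\BP$-cobar complex. If you want to reproduce the theorem, the $X(n)$ spectra and that vanishing line are the irreplaceable ingredients; none of the chromatic machinery you invoke is available until after the theorem is in hand.
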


When $R$ is highly structured, a stronger nilpotence criterion, due to Mathew, Noel, and Naumann, is available \cite{mathew2017nilpotence}:

\begin{theorem*}[May Nilpotence Conjecture]
Suppose $R$ is an $\scr E_\infty$-ring spectrum.  Then an element $x \in \pi_*(R)$ is nilpotent if and only if its Hurewicz image in $\H\Z_*(R)$ is nilpotent.
\end{theorem*}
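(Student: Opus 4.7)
First, the ``only if'' direction is immediate. For the converse, the plan is a two-step reduction leveraging the $\EE_\infty$-structure together with the classical Devinatz--Hopkins--Smith (DHS) theorem. Given $x \in \pi_*(R)$ whose Hurewicz image in $\H\Z_*(R)$ is nilpotent, I would form the homogeneous multiplicative localization $R[x^{-1}]$, which inherits an $\EE_\infty$-structure because $x$ is a homogeneous element. Since smashing with $\H\Z$ commutes with filtered colimits and $x$ acts nilpotently on $R \wedge \H\Z$, one has $R[x^{-1}] \wedge \H\Z \simeq 0$. It therefore suffices to prove the statement: \emph{if $R$ is an $\EE_\infty$-ring with $R \wedge \H\Z \simeq 0$, then $R \simeq 0$}, since $R[x^{-1}] \simeq 0$ then forces a power of $x$ to vanish in $\pi_*(R)$.

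For this reduced statement, DHS nilpotence applied to the unit $1_R \in \pi_0 R$ shows it is enough to prove $R \wedge \MU \simeq 0$. Rationally this is immediate since $\MU_\Q$ is a free $\H\Q$-module (polynomial on the $b_i$). At a prime $p$ I would replace $\MU$ by its Quillen summand $\BP$ and work with the chromatic filtration of $\BP$, whose associated graded consists of shifts of $\H\F_p$. The obstacle is that this filtration is infinite, so the vanishing of $R \wedge \H\F_p$ does not propagate to $R \wedge \BP$ by purely formal means.

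The resolution---and where the $\EE_\infty$-structure is genuinely used---is to invoke the Hopkins--Mahowald presentation of $\H\F_p$ as the Thom spectrum of an $\EE_2$-map $\Omega^2 S^3 \to BGL_1(\mathbb{S}_{(p)})$, which equips $\H\F_p$ with enough $\EE_\infty$-$\mathbb{S}_{(p)}$-algebra structure to run a Mathew--Naumann--Noel style descent argument. Descent converts the infinite chromatic tower into a finite-length nilpotence statement and propagates the vanishing from $\H\F_p$ through $\BP$ to $\MU$, after which DHS closes the argument. I expect the main technical obstacle to be precisely this descent step: turning the Thom-spectrum structure of $\H\F_p$ into honest descendability in the sense that absorbs the interaction with the $\EE_\infty$-multiplication on $R$.
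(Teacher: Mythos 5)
The theorem you are proving is not one the paper establishes; it is the classical result of Mathew--Naumann--Noel (\cite{mathew2017nilpotence}), quoted as motivation. The paper only records the elementary equivalence of the two formulations, and your first paragraph reproduces exactly that reduction: passing to $R[x^{-1}]$ reduces the nilpotence statement to showing that $R \wedge \H\Z \simeq 0$ forces $R \simeq 0$, and DHS applied to $1_R$ reduces that to $R \wedge \MU \simeq 0$. Up to this point you are correct and in line with the paper's remark.

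The gap is in the step that carries the whole weight. First, a factual issue: $\BP$ has no filtration whose associated graded consists of shifts of $\H\F_p$. What is true is that $\H\F_p \simeq \BP/(p,v_1,v_2,\dots)$ is the colimit of a tower of finite quotients $\BP/(p,v_1,\dots,v_m)$. Second, the ``infinite tower'' is not the real obstacle: because the sphere is compact, $R \wedge \H\F_p \simeq 0$ together with homotopy-ring structures on the quotients already shows $R \wedge \BP/(p,\dots,v_m) \simeq 0$ for $m$ large (this is exactly the mechanism in Lemma~\ref{lemm:detect-abstract}(2) of the paper). The actual obstacle is going back \emph{down} the tower from $\BP/(p,\dots,v_m)$ to $\BP/p$, and for that one needs $R \wedge \BP/(p,\dots,v_{i-1})[v_i^{-1}] \simeq 0$ for each $i$ (cf.\ Lemma~\ref{lemm:detect-abstract}(1)). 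This telescope vanishing is where the entire technical content lies, and it is precisely what the $\EE_\infty$-structure is used for.

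Your proposed resolution does not supply it. The Hopkins--Mahowald theorem does not equip $\H\F_p$ with additional $\EE_\infty$-structure (it already has one); what it gives, in MNN's hands, is that an $\EE_2$-ring with $p=0$ in $\pi_0$ is an $\H\F_p$-algebra --- but your $R$ is not assumed to kill $p$, so this does not apply. And ``descent'' cannot close the gap: $\H\F_p$ is not descendable over $\BP$ or $\MU$ (if it were, the theorem would be nearly formal, and one would get a uniform horizontal vanishing line in the $\BP$-Adams spectral sequence for arbitrary $\BP$-modules, which is false). The mechanism MNN actually use --- and which the present paper transports to the motivic setting with $R_n$ playing the role of Morava $E$-theory --- is the norm/transfer identity in an $\EE_\infty$-ring: $\N(p^k) \equiv -p^{k-1}\Tr(1) \pmod{p^k}$, combined with a computation showing that $\Tr(1)$ admits a section after tensoring with $E_n$. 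That is a power-operations argument, not a descent argument, and your proposal as written is missing it.
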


The proof of the May Nilpotence Conjecture in \cite{mathew2017nilpotence} runs by reduction to the DHS Nilpotence Theorem.  As such, the authors prefer to phrase the May Nilpotence Conjecture in the following equivalent form:

\begin{theorem*}[May Nilpotence Conjecture]
Suppose that $R$ is an $\scr E_\infty$-ring spectrum.  Then $R \wedge \MU \simeq 0$ if and only if $R \wedge \H\Z \simeq 0$.
\end{theorem*}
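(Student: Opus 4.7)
The direction $R \wedge \MU \simeq 0 \Rightarrow R \wedge \H\Z \simeq 0$ is immediate and does not use the $\scr E_\infty$-hypothesis: the complex orientation $\MU \to \H\Z$ makes $\H\Z$ an $\MU$-algebra, so
\[
R \wedge \H\Z \simeq (R \wedge \MU) \wedge_\MU \H\Z,
\]
which vanishes whenever $R \wedge \MU$ does.

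For the converse, assume $R$ is $\scr E_\infty$ with $R \wedge \H\Z \simeq 0$, and set $A := R \wedge \MU$, which is $\scr E_\infty$ over $\MU$ and satisfies $A \wedge_\MU \H\Z \simeq 0$. The goal is to deduce $A \simeq 0$. The natural attack is descent along $\MU \to \H\Z$: form the Amitsur cobar diagram
\[
\H\Z \rightrightarrows \H\Z \wedge_\MU \H\Z \to \cdots
\]
in $\MU$-modules; smashing with $A$ yields a cosimplicial spectrum all of whose terms vanish. If the orientation $\MU \to \H\Z$ were descendable in the sense of Mathew--Naumann--Noel, the totalization would recover $A$ itself, and we would be done.

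The main obstacle is that $\MU \to \H\Z$ is \emph{not} descendable in this naive sense: the kernel of $\MU_\ast \to \Z$ is the infinite polynomial ideal $(x_1, x_2, \ldots)$, and the Amitsur tower admits no finite cut-off. To get around this one must use the $\scr E_\infty$-structure on $A$ essentially. The plan is to combine the cobar diagram with the DHS Nilpotence Theorem applied to the homotopy ring spectrum $A$: by DHS, $A \simeq 0$ would follow from showing that every element of $\pi_\ast A$ has nilpotent $\MU$-Hurewicz image in $\MU_\ast(A)$. To leverage $R \wedge \H\Z \simeq 0$ for this, I would work $p$-locally, replace $\MU_{(p)}$ by its $\BP$-summands, and climb the chromatic tower $\H\F_p \leftarrow \BP/(p,v_1) \leftarrow \cdots \leftarrow \BP$ one finite stage at a time, using the $\scr E_\infty$-structure of $A$ to promote $\H\Z$-acyclicity to $\BP/I_n$-acyclicity for each $n$. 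Organizing this uniformly—stitching the $p$-local descents together across all primes, and controlling the limit along the chromatic tower—is the heart of the argument, and is exactly the situation that the Mathew--Naumann--Noel framework of descendability and $\otimes$-nilpotence is engineered to handle.
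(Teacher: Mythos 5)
This statement is not proved in the paper at all; it is cited from Mathew--Naumann--Noel \cite{mathew2017nilpotence}, and the paper's only indication of the argument is the remark that they prove it ``by studying power operations in Morava $E$-theories.'' So the comparison must be against that citation.

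Your easy direction is correct and is indeed the standard base-change argument. For the hard direction, the outer scaffolding of your plan (reduce via DHS to showing $K(n)$-acyclicity for all heights, work one prime and one finite height at a time, use the $\scr E_\infty$-structure somewhere essential) does match the shape of the MNN proof. However, there is a concrete gap exactly at the step you flag as ``the heart of the argument'' but do not carry out: you propose to ``use the $\scr E_\infty$-structure of $A$ to promote $\H\Z$-acyclicity to $\BP/I_n$-acyclicity for each $n$,'' climbing a tower of quotients $\BP/(p,v_1,\dots,v_{n-1})$. This cannot work as stated, because for finite $n>0$ these quotients are \emph{not} $\scr E_\infty$-rings (they are generally only $E_1$), so the $\scr E_\infty$-structure gives you no handle on them directly. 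The actual MNN mechanism is to replace these quotients by Morava $E$-theories $E_n$ --- which, being obtained from $\BP$ by localizing and completing rather than by taking quotients, \emph{do} admit $\scr E_\infty$-structures --- and then to prove a Tate-vanishing-type statement: any $\scr E_\infty$ $E_n$-algebra on which $p$ is nilpotent in $\pi_0$ is zero. That statement is proved by an explicit norm/transfer computation in $E_n^*(\B\Sigma_p)$ (the direct analogue of Corollaries \ref{cor:norm-trick}, \ref{cor:gamma}, and \ref{cor:vanishing-smash-product} in the present paper). You are also silently using that $p$ is nilpotent in $\pi_0$ of the relevant algebra; this comes from the rational part of $R \wedge \H\Z \simeq 0$ (namely $R \wedge \H\Q \simeq 0$) and needs to be stated. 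Finally, the opening discussion of Amitsur descent along $\MU \to \H\Z$ is a dead end you correctly abandon, but it plays no role in MNN's argument and could be cut. In short: right outline, but the single step where the $\scr E_\infty$-hypothesis is actually used --- the norm argument at a fixed $E_n$ --- is missing, and the objects you propose to feed it ($\BP/I_n$) are the wrong ones because they lack the needed multiplicative structure.
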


\begin{remark}
To see that the first version of the May Nilpotence Conjecture implies the second, note that $R \simeq 0$ if and only if $1$ is nilpotent in $\pi_*(R)$.  To see that the second version implies the first, note that $x$ is nilpotent in $\pi_*(R)$ if and only if $R[x^{-1}] \simeq 0$. 
Here $R[x^{-1}]$ denotes the mapping telescope (sequential colimit), which one may show is also an $\scr E_\infty$-ring.
\end{remark}

The present paper is part of a larger project to understand which nilpotence theorems continue to hold in the stable motivic category $\SH(S)$, where $S$ is a scheme.
While $\SH(S)$ is a symmetric monoidal stable $\infty$-category, and hence admits a notion of $\scr E_\infty$-ring object, there is additionally the stronger notion of a \emph{normed spectrum}, which in the notation of \cite{bachmann-norms} is an object of $\NAlg(\SH(S))$.
Many of the most important $\scr E_\infty$-rings in the classical stable homotopy category, such as $\MU, \text{KU},$ and $\H\Z$, have normed analogs, such as $\MGL,\mathrm{KGL},$ and $\H\Z \in \NAlg(\SH(S))$.
Here, we will be particularly interested in $\H\Z$, which denotes Spitzweck's motivic cohomology spectrum \cite{spitzweck2012commutative}, and $\MGL$, which denotes the algebraic cobordism spectrum (see, e.g., \cite[\S16]{bachmann-norms}).

Our main result is the following normed analog of the May Nilpotence Conjecture:
\begin{theorem} \label{thm:mainII}
Let $S$ be a noetherian scheme of finite dimension, and write $\scr S$ for the set of primes not invertible on $S$.
Let $E \in \NAlg(\SH(S))$ and suppose that \[ E \wedge \H\Z[\scr S^{-1}] \wequi 0. \]
Then also \[ E \wedge \MGL[\scr S^{-1}] \wequi 0. \]
\end{theorem}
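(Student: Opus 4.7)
The plan is to follow the strategy of Mathew--Naumann--Noel (MNN) for the classical May nilpotence conjecture, with the normed structure of $E$ playing the role taken by $\scr E_\infty$ power operations in MNN. A useful first step is to observe that $E \wedge \MGL$ is itself a normed $\MGL$-algebra, with $(E \wedge \MGL) \wedge \H\Z[\scr S^{-1}] \wequi 0$ by the hypothesis. Thus it suffices to prove the \emph{a priori} stronger statement: for every normed $\MGL$-algebra $R$ with $R \wedge \H\Z[\scr S^{-1}] \wequi 0$ one has $R \wequi 0$; applying this to $R := E \wedge \MGL$ yields the theorem. One should then reduce to a convenient base, e.g.\ $S = \operatorname{Spec} k$ for $k$ a perfect field, using continuity of $\SH(-)$ and compatibility of $\MGL$, $\H\Z$, and the normed-rings formation with such reductions.

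The central tool would be the slice filtration of $\MGL$. Over a perfect field one has $s_0 \MGL \wequi \H\Z$, and by work of Levine, Spitzweck, and Voevodsky each higher slice $s_n \MGL[\scr S^{-1}]$ is a bigraded shift of an $\H\Z[\scr S^{-1}]$-module built from a graded piece of the Lazard ring. In the tower $\cdots \to f_2 \MGL \to f_1 \MGL \to f_0 \MGL = \MGL$, every slice $s_n \MGL[\scr S^{-1}]$ thus becomes contractible after smashing with $R$ over $\MGL$ (it is a module over the vanishing ring $R \wedge \H\Z[\scr S^{-1}]$), giving $R \wequi R \wedge_{\MGL} f_n \MGL[\scr S^{-1}]$ for every $n \geq 0$. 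In words, $R$ survives arbitrarily far down the slice tower of $\MGL$.

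The crux, and main obstacle, is to deduce $R \wequi 0$ from this ``effective-$\infty$'' property. For a generic $\scr E_\infty$-$\MGL$-algebra no such deduction is possible, since the slice tower of $\MGL$ need not converge after smashing, so the normed structure must enter essentially, equipping $\pi_{**} R$ with transfer- and power-operation-like structure coming from norms along finite \'etale covers. I expect a normed refinement of the MNN nilpotence-detection lemma, asserting that any normed $\MGL$-algebra surviving to arbitrarily deep levels of the slice filtration must be contractible. Establishing this normed nilpotence principle, and verifying its compatibility with the slice filtration of $\MGL$, appears to be the main technical input that the paper must supply.
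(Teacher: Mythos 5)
You correctly identify the MNN blueprint — reduce to fields, replace $E$ by a normed $\MGL$-algebra $R$ with $R \wedge \H\Z = 0$, and use a Hopkins--Morel-type input to convert the vanishing of $R \wedge \H\Z$ into the statement that $R$ survives into a limit — but the step you flag as ``the main technical input that the paper must supply'' is in fact the entire content of the proof, and your proposal does not supply it. You gesture at ``a normed refinement of the MNN nilpotence-detection lemma, asserting that any normed $\MGL$-algebra surviving to arbitrarily deep levels of the slice filtration must be contractible,'' but give no argument, and no such statement appears in the paper (nor, I suspect, is the slice filtration the right organizing device: $f_\infty\MGL$ need not vanish after base change to a normed algebra, and norms do not interact simply with the effectivity filtration). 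This is a genuine gap, not a cosmetic omission.

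What the paper actually does, in place of your slice-tower step, is the following. It constructs normed spectra $R_i := \MGL_{(\ell)}[v_i^{-1}]^{\wedge}_{(\ell, v_1, \dots, v_{i-1})}$ (motivic analogs of $\MU[v_n^{-1}]^\wedge_{v_0,\dots,v_{n-1}}$, standing in for Morava $E$-theories), normed by Corollary~\ref{cor:loc-MGL}. The key vanishing result is Corollary~\ref{cor:vanishing-smash-product}: over a suitable (``adequate'') field containing a primitive $\ell$-th root of unity, any torsion normed $R_i$-algebra is zero. Its proof is the heart of the paper: one computes $R^{2*,*}(\B\mu_\ell) \cong R^{2*,*}[x]/g(x)$ via Weierstrass preparation (Lemma~\ref{lemm:weierstrass}, Corollary~\ref{cor:cohomologyanswer}), shows the completed $R$-homology of $\B\mu_\ell$ is finite free (Lemma~\ref{lemm:homology-start}), identifies the stable transfer as $\Tr(1) = g(x)/x$ and exhibits a section $\gamma$ of it (Corollary~\ref{cor:gamma}), and finally plays the norm trick $\N(\ell^k) \equiv -\ell^{k-1}\Tr(1) \pmod{\ell^k}$ (Corollary~\ref{cor:norm-trick}) against this section to deduce, by downward induction, that $\ell^{k-1} = 0$ and eventually $1 = 0$. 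The conclusion is then assembled by a formal ``detecting zero rings'' lemma (Lemma~\ref{lemm:detect-abstract}) run along the chromatic tower $\MGL/(\ell,v_1,\dots,v_n)$, $\MGL[v_{i+1}^{-1}]/(\ell,v_1,\dots,v_i)$ --- together with the Hopkins--Morel input $\MGL/(\ell,v_1,\dots) \simeq \H\Z/\ell[T_i]$ (Lemma~\ref{lemm:hopkins-morel-modified}) and descent along separable extensions for normed spectra (Lemma~\ref{lemm:detect-zero-normed}, needed because the $R_i$ only acquire homotopy-ring quotients after a finite separable extension). None of this transfer/norm machinery, nor the $R_i$, nor the ``adequate field'' device, appears in your sketch; supplying it is exactly the work the theorem requires.
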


We deduce Theorem \ref{thm:mainII} from the following slightly stronger result:
\begin{theorem} \label{thm:main}
Let $S$ be a noetherian scheme of finite dimension, $\ell$ a prime invertible on $S$, and $E \in \NAlg(\SH(S))$.
Suppose that $\ell^n = 0 \in \pi_0(E)$ for some $n$, and also that \[ E \wedge \H\Z/\ell \wequi 0. \]
Then also \[ E \wedge \MGL/\ell \wequi 0. \]
\end{theorem}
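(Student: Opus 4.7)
The plan is to show that $\MGL/\ell$ is ``$\H\Z/\ell$-nilpotent'' in a sense strong enough that the hypothesis $E \wedge \H\Z/\ell \simeq 0$ forces $E \wedge \MGL/\ell \simeq 0$. The Hopkins--Morel--Hoyois theorem, applicable here because $\ell$ is invertible on $S$, identifies $\H\Z/\ell$ as the quotient of $\MGL/\ell$ by generators $x_1, x_2, \ldots$ of the Lazard ring; equivalently, $\MGL/\ell$ admits a slice-type tower $\cdots \to f^2(\MGL/\ell) \to f^1(\MGL/\ell) \to \MGL/\ell$ whose successive fibers are finite sums of shifts of $\H\Z/\ell$. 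Smashing each finite truncation with $E$ and inducting through the cofiber sequences of this tower immediately yields $E \wedge (\MGL/\ell)/f^{n+1} \simeq 0$ for every finite $n$.

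The substance of the theorem is then to pass from the finite truncations to $\MGL/\ell \simeq \lim_n (\MGL/\ell)/f^{n+1}$ after smashing with $E$. Smashing does not preserve arbitrary limits, so one needs to establish convergence of the slice tower for $E \wedge \MGL/\ell$, i.e., that $E \wedge f^{n+1}(\MGL/\ell) \to 0$ in a suitable sense. Here both remaining hypotheses become crucial: the relation $\ell^n = 0 \in \pi_0(E)$ makes $E$ (and hence $E \wedge \MGL/\ell$) $\ell$-power torsion, while the $\NAlg$-structure on $E$ should provide the multiplicative rigidity needed to upgrade pointwise vanishing into a convergent descent, playing the role that $\scr E_\infty$-structure plays in the descent argument of Mathew--Naumann--Noel.

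The main obstacle is precisely this convergence step. A natural dual approach is to work with the $\H\Z/\ell$-based motivic Adams cobar resolution of $\MGL/\ell$: each cosimplicial level involves a smash factor of $\H\Z/\ell$ and hence vanishes upon smashing with $E$, so the theorem reduces to showing that $E \wedge \MGL/\ell$ is $\H\Z/\ell$-complete, i.e., agrees with the totalization of this resolution. Establishing this completeness motivically requires combining the $\ell$-torsion hypothesis on $E$ with the normed structure and with the finite Nisnevich cohomological dimension guaranteed by $S$ being noetherian and finite-dimensional, in order to transport the classical $\ell$-adic Adams convergence results to the motivic setting over a general base. I expect the delicate technical work of the proof to lie in setting up this convergence: carrying through a motivic analog of the Mathew--Naumann--Noel descent machinery that genuinely uses the $\NAlg$-structure on $E$ (rather than just an $\scr E_\infty$-structure), most likely via norms along finite étale covers of degree coprime to $\ell$.
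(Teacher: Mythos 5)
Your proposal does not match the paper's proof, and more importantly it contains a genuine gap exactly at the step you identify as ``the delicate technical work.'' You propose to present $\MGL/\ell$ via a tower (slice or $\H\Z/\ell$-Adams) whose associated graded is built from $\H\Z/\ell$, kill the finite stages by smashing with $E$, and then establish convergence of the tower after smashing with $E$ using the $\ell$-torsion and normed hypotheses. The problem is that this convergence statement is essentially \emph{equivalent} to the theorem, not an auxiliary technical lemma. For instance, if $E \wedge \MGL[v_1^{-1}]/\ell$ were nonzero, then $E \wedge \MGL/\ell$ could not be $\H\Z/\ell$-complete (the $v_1$-periodic part is invisible to the slice/Adams tower), so proving the completeness you want already requires ruling out exactly the phenomena the theorem rules out. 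Saying that the $\NAlg$-structure ``should provide the multiplicative rigidity needed to upgrade pointwise vanishing into a convergent descent'' names the difficulty without resolving it: no mechanism is given by which norms along finite \'etale maps would force such completeness, and the suggestion of using covers of degree coprime to $\ell$ points in the wrong direction.

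The paper's argument avoids any convergence/completeness statement for the slice or Adams tower. Instead it proceeds ``chromatically.'' One first shows, for each $i \ge 0$, that $E \wedge \MGL[v_{i+1}^{-1}]/(\ell, v_1, \dots, v_i) \simeq 0$. This is the content-bearing step and it is where the $\NAlg$-structure is really used: one forms the normed spectra $R_i = \MGL_{(\ell)}[v_i^{-1}]^{\wedge}_{(\ell, v_1,\dots, v_{i-1})}$ (normed because they are built from $\MGL$ only by inverting and completing, \emph{not} by quotienting), computes $R_i^{2*,*}(\B\mu_\ell)$ via the formal group law and Weierstrass preparation, shows the stable transfer along $\B\mu_\ell \to \ast$ admits an $R_i$-linear section, and then deploys the norm relation $\N(\ell^k) \equiv -\ell^{k-1}\Tr(1) \pmod{\ell^k}$ to conclude by downward induction that any normed $R_i$-algebra with $\ell$-power torsion is zero (Corollary \ref{cor:vanishing-smash-product}). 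Once these $v_n$-periodic layers are killed, the remaining step (Lemmas \ref{lemm:detect-abstract} and \ref{lemm:final}) is elementary algebra rather than a limit/convergence argument: from $E \wedge \MGL/(\ell,v_1,\dots,v_{n+1}) \simeq 0$ the cofiber sequence $A_n \xrightarrow{v_{n+1}} A_n \to A_{n+1}$ shows $v_{n+1}$ acts invertibly on $E \wedge A_n$, so $E \wedge A_n \simeq E \wedge A_n[v_{n+1}^{-1}] \simeq 0$; one also uses compactness of the unit and a homotopy ring structure on the $A_N$ (available after a finite separable extension, Lemma \ref{lemm:final-2}) to get the induction started from $E \wedge \H\Z/\ell \simeq 0$. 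Your proposal never engages with the vanishing of $E \wedge \MGL[v_{i+1}^{-1}]/(\ell, v_1, \dots, v_i)$, which is precisely what makes the passage from the $\H\Z/\ell$-vanishing to the $\MGL/\ell$-vanishing go through.
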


\begin{remark}
In \cite{mathew2017nilpotence}, Mathew, Noel, and Naumann prove the May Nilpotence Conjecture by studying power operations in Morava $E$-theories.  It is beyond the range of present technology to produce a normed spectrum structure on any direct analog of Morava $E$-theory in $\SH(S)$.  Our proof uses, as replacements for Morava $E$-theories, normed spectra $R_n$ that are motivic analogs of
$$\MU[v_n^{-1}]^{\wedge}_{v_0,v_1,\cdots,v_{n-1}}.$$
Since the $R_n$ spectra are built out of $\MGL$ by inverting and completing, but not by quotienting, they are easily seen to be normed spectra.  Other than the change of context from a height $n$ Morava $E$-theory to $R_n$, our proof is similar in spirit to that of Mathew, Noel, and Naumann, though with a few additional complications.  In particular, we must deal with the fact that Morava $K$-theories are not homotopy ring spectra over arbitrary bases $S$.
\end{remark}

\subsection{An \texorpdfstring{$\scr E_{\infty}$ $\MGL$}{E∞ MGL}-algebra with vanishing motive}

To understand the significance of Theorem \ref{thm:mainII}, it is important to note that the assumption that $E$ be normed \emph{cannot} be weakened to $E$ being merely $\scr E_\infty$:

\begin{example}[a non-zero, torsion $\scr E_\infty$ $\MGL$-algebra with vanishing motive]
Let $k = \C$.
Denote by $K(n)^\mot$ the \emph{motivic Morava $K$-theory spectrum}, which we can define as \[ K(n)^\mot = \MGL_{(\ell)}[t_{\ell^n-1}^{-1}] \otimes \bigotimes_{i \ne \ell^n - 1} \MGL_{(\ell)}/t_i; \] see e.g. \S\ref{subsec:generators} for a definition of the $t_i$.

First, note that $K(n)^\mot \wedge \H\Z \wequi 0$.
Indeed, this is an orientable homotopy ring spectrum whose coefficients $\pi_{2*,*}$ form a ring of characteristic $\ell$ over which the formal group law of $K(n)^\mot$ becomes isomorphic to the additive one, and hence must have infinite height \cite[Lemma A2.2.9]{ravenel1986complex}.
This is only possible if $v_n$ maps to zero; since $v_n$ also maps to a unit we conclude that this must be the zero ring.

Denote by $\1_\ell^\wedge/\tau$ the $\scr E_{\infty}$ ring spectrum that is the \emph{motivic cofiber of $\tau$} \cite{gheorghe2017motivic}.
Note that $K(n)^\mot/\tau \ne 0$ (e.g. since we know its homotopy groups, which are computed below).
We shall construct an $\scr E_\infty$-algebra structure on $K(n)^\mot/\tau$.

There exists an equivalence of categories between certain cellular $\MGL_\ell^\wedge/\tau$-modules and the bounded derived category of $\MU_*$-modules \cite[Theorem 3.7]{gheorghespecial}, which is in fact symmetric monoidal\footnote{One way of seeing this is as follows. The work of Pstragowski identifies cellular motivic spectra over $\C$ with (even) \emph{synthetic spectra}, and this equivalence is symmetric monoidal \cite[Theorem 7.34, Remark 7.35]{pstrkagowski2018synthetic}. The modules over cofiber $\tau$ correspond in the two categories, and are symmetric monoidally equivalent to the stable category of $\MU_*\MU$-comodules \cite[Proposition 4.53, Remark 4.55]{pstrkagowski2018synthetic}. This identifies the symmetric monoidal category of modules of $\MGL/\tau$ with modules over $\MU_*\MU$ in the stable category of $\MU_*\MU$-comodules. This is symmetric monoidally equivalent to the derived category of $\MU_*$-modules, by taking ``extended'' comodules. \iftoggle{draft}{\tom{Not sure if we want to say even more here...}}}.
Let $X$ be a cellular $\MGL_\ell^\wedge/\tau$-module such that $\pi_{**}X$ is concentrated in degrees $(2*,*)$.
Then under this equivalence, $X$ is sent to the $\MU_* = \MGL_{2*,*}$-module $\pi_{2*,*} X$.
We have $\pi_{**} K(n)^\mot/\tau = \Z/\ell[v_n, v_n^{-1}]$ with $|v_n| = (2 \ell^n - 2, \ell^n - 1)$, and consequently $K(n)^\mot/\tau$ corresponds to the $\MU_*$-module $\pi_* K(n) = \Z/\ell[v_n, v_n^{-1}]$.
This admits an obvious commutative $\MU_*$-algebra structure, and hence defines an $\scr E_\infty$-ring in the derived category of $\MU_*$-modules\tombubble{right?}.
The above equivalence transports this to an $\scr E_\infty$-ring structure on $K(n)^\mot/\tau \in \MGL\Mod$.
\end{example}

\subsection{Normed nilpotence, related work, and open questions}

It has long been known that a naive analog of Devinatz--Hopkins--Smith nilpotence fails in motivic stable homotopy theory, even in the category $\SH(\C)$.  The prototypical example of this is that the element $\eta$ is not nilpotent in the homotopy of the unit object $\1 \in \SH(\C)$, despite the fact that $\eta$ has trivial Hurewicz image in $\MGL_{**}$.  In fact, $\1[\eta^{-1}]$ is an $\scr E_{\infty}$-ring object in $\SH(\C)$ that is non-zero, though it becomes trivial after tensoring with $\MGL$.  There remains, however, the intriguing fact that $\1[\eta^{-1}]$ is \emph{not} a normed spectrum in $\SH(\C)$:

\begin{example}[Example 12.11 in \cite{bachmann-norms}]
Suppose $S$ is pro-smooth over a field of characteristic $\ne 2$\tombubble{I think the only assumption needed for this is $1/2 \in S$. (Using some tricks that we didn't have at the time the norms paper was written.) In any case this also follows from \cite{bachmann-splitting}.}, and that $R \in \NAlg(\SH(S))$.  If the multiplication by $\eta$ map is an equivalence on $R$, then $R \simeq 0$.
\end{example}

In light of the above example, it seems reasonable to make the following definition:

\begin{definition}
Suppose $R \in \NAlg(\SH(S))$ and $x \in \pi_{*,*} R$.  We say that $x$ is \emph{normed nilpotent} if, whenever $R \to R'$ is a map of normed spectra such that the multiplication by $x$ map is an equivalence on $R'$, $R' \simeq 0$.
\end{definition}

\begin{question} \label{qst:normed-nilpotence}
For what class of normed spectra $R$ is it true that normed nilpotence in $R$ is detected under the Hurewicz map $R \to R \wedge \H\Z$?
\end{question}

Our work here can be thought of as the beginnings of an answer to the above question.  For example, we have the following:

\begin{theorem}
Let $S$ be a noetherian scheme of finite dimension, and write $\scr S$ for the set of primes not invertible on $S$.  Suppose that $R \in \NAlg(\SH(S))$ is also a homotopy left unital $\MGL[\scr S^{-1}]$-algebra, with no assumed relationship between the normed spectrum structure and the homotopy left unital $\MGL[\scr S^{-1}]$-algebra structure.  Then normed nilpotence in $R$ is detected by the Hurewicz map
$$R \to R \wedge \H\Z[\scr S^{-1}].$$
\end{theorem}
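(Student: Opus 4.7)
The plan is to reduce to Theorem \ref{thm:mainII} and then propagate the resulting vanishing of $R' \wedge \MGL[\scr S^{-1}]$ down to $R'$ itself using the two algebra structures on $R$. Unwinding the definition of normed nilpotence, suppose $x \in \pi_{*,*} R$ has nilpotent Hurewicz image in $\pi_{*,*}(R \wedge \H\Z[\scr S^{-1}])$, and let $f \colon R \to R'$ be a map of normed spectra on which multiplication by $x$ is an equivalence; we must show $R' \wequi 0$.

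First, apply $(-) \wedge \H\Z[\scr S^{-1}]$. The image of $x$ in $\pi_{*,*}(R' \wedge \H\Z[\scr S^{-1}])$ is the further image of a nilpotent class along $R \wedge \H\Z[\scr S^{-1}] \to R' \wedge \H\Z[\scr S^{-1}]$, hence is itself nilpotent; on the other hand it is a unit, since $x$ acts invertibly on $R'$ and therefore on $R' \wedge \H\Z[\scr S^{-1}]$. A unital ring in which some element is both nilpotent and a unit is the zero ring, so $R' \wedge \H\Z[\scr S^{-1}] \wequi 0$. Applying Theorem \ref{thm:mainII} to the normed spectrum $R'$ yields $R' \wedge \MGL[\scr S^{-1}] \wequi 0$.

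To upgrade this to $R' \wequi 0$ I use the extra structure on $R$. The homotopy left unital $\MGL[\scr S^{-1}]$-algebra structure exhibits $R$ as a retract of $\MGL[\scr S^{-1}] \wedge R$ in $\SH(S)$, via the unit $\1 \to \MGL[\scr S^{-1}]$ and the algebra multiplication. Smashing with $R'$ shows that $R \wedge R'$ is a retract of $\MGL[\scr S^{-1}] \wedge R \wedge R'$; the latter vanishes because $\MGL[\scr S^{-1}] \wedge R' \wequi 0$, so $R \wedge R' \wequi 0$. The map $f$ is in particular a map of $\scr E_\infty$-rings, hence unit-preserving, so the composite
$$R' \wequi \1 \wedge R' \xrightarrow{\eta_R \wedge \id} R \wedge R' \xrightarrow{f \wedge \id} R' \wedge R' \xrightarrow{\mu_{R'}} R'$$
equals the identity by the unit axiom of $R'$. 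Therefore $R'$ is a retract of $R \wedge R' \wequi 0$, and $R' \wequi 0$ as required.

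The main conceptual input is Theorem \ref{thm:mainII}, which is the hard work of the paper; the remaining propagation is formal. The only subtlety to verify is that each structure is used in the appropriate place: Theorem \ref{thm:mainII} requires only the normed structure on $R'$, while the $\MGL[\scr S^{-1}]$-algebra structure is needed solely on $R$ to manufacture the retraction, with no compatibility between the two structures on $R$ ever invoked.
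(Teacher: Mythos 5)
Your proof is correct and follows the same overall strategy as the paper: deduce $R' \wedge \H\Z[\scr S^{-1}] \wequi 0$, apply Theorem~\ref{thm:mainII} to get $R' \wedge \MGL[\scr S^{-1}] \wequi 0$, then conclude $R' \wequi 0$ by a retraction argument. Two small differences are worth noting. First, you read the hypothesis as ordinary nilpotence of the Hurewicz image of $x$ in $\pi_{*,*}(R \wedge \H\Z[\scr S^{-1}])$, whereas the paper runs with the weaker hypothesis that the image of $x$ is merely \emph{normed} nilpotent in $R \wedge \H\Z[\scr S^{-1}]$; under that weaker hypothesis the vanishing of $R' \wedge \H\Z[\scr S^{-1}]$ comes straight from the definition of normed nilpotence applied to the induced map $R \wedge \H\Z[\scr S^{-1}] \to R' \wedge \H\Z[\scr S^{-1}]$, rather than from a nilpotent-equals-unit argument, and the resulting statement is genuinely stronger. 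Second, your final retraction is routed through $R \wedge R'$ rather than exhibiting $R'$ directly as a homotopy $\MGL[\scr S^{-1}]$-algebra; this organization is arguably cleaner in light of the ``no assumed relationship'' clause, since it uses the $\MGL[\scr S^{-1}]$-algebra structure of $R$ only to produce the retraction of $R$ off $\MGL[\scr S^{-1}] \wedge R$, and uses the normed unit of $R$ together with $f$ being a ring map only to produce the retraction of $R'$ off $R \wedge R'$ --- never needing the two unit maps $\1 \to R$ to coincide, which the paper's phrasing ``the composite $\MGL[\scr S^{-1}] \to R \to R'$ gives $R'$ a homotopy left unital $\MGL[\scr S^{-1}]$-algebra structure'' appears to implicitly require.
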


\begin{proof}
Let $x$ be an element of $\pi_{*,*} R$ such that $x$ is normed nilpotent in $R \wedge \H\Z[\scr S^{-1}]$.  Our task is to prove that $x$ is normed nilpotent in $R$.  To do so, suppose that $R \to R'$ is a map of normed spectra such that the multiplication by $x$ map is an equivalence on $R'$.  By assumption $\H\Z[\scr S^{-1}] \wedge R' \simeq 0$, and so by Theorem \ref{thm:mainII} we conclude that $\MGL[\scr S^{-1}] \wedge R' \simeq 0$.  However, the composite map $\MGL[\scr S^{-1}] \to R \to R'$ gives $R'$ a homotopy left unital $\MGL[\scr S^{-1}]$-algebra structure, whence $R'$ is a retract of $R' \wedge \MGL[\scr S^{-1}]$, and hence zero.
\end{proof}

Forthcoming work \cite{bachmann-splitting} of the first author, Elden Elmanto and Jeremiah Heller answers Question \ref{qst:normed-nilpotence} in another case.

\begin{theorem} \label{thm:bachmann-elmanto-heller}
Let $S$ be a scheme with $1/2 \in S$, $R$ a normed spectrum over $S$, and $x \in \pi_{2*+1,*} R$ an element in odd simplicial degree.
Then normed nilpotence of $x$ is detected under the Hurewicz map $$R \to R \wedge \H\Z,$$ and in fact even by the Hurewicz map $$R \to R \wedge \H\mathbb{F}_2.$$
\end{theorem}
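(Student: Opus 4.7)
The plan is to prove the nontrivial direction of the detection statement: if the image of $x$ in $\pi_{*,*}(R \wedge \H\F_2)$ is normed nilpotent, then $x$ itself is normed nilpotent in $R$. Unpacking the definition, this amounts to showing that any map of normed spectra $R \to R'$ on which $x$ acts as an equivalence must have $R' \wequi 0$, given that applying the hypothesis to the normed spectrum $R' \wedge \H\F_2$ (on which $x$ still acts invertibly) yields $R' \wedge \H\F_2 \wequi 0$. So the whole problem reduces to: a normed spectrum $R'$ with an odd-simplicial-degree element acting invertibly and with $R' \wedge \H\F_2 \wequi 0$ is zero.

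The first ingredient is motivic graded commutativity. For $x \in \pi_{p,q}R'$ with $p = 2k+1$, one has $x^2 = (-1)^p \langle -1\rangle^q x^2 = -\langle -1\rangle^q x^2$, so $(1 + \langle -1\rangle^q)x^2 = 0$. Expanding $\langle -1\rangle = 1 + \eta\rho$, this yields a relation of the form $(2 + c\eta\rho + O(\eta^2))x^2 = 0$. Invertibility of $x^2$ promotes this to a syzygy in $\pi_{0,0}R'$ that expresses $2$ in terms of elements divisible by $\eta$; in particular the unit $1$ lies in the ideal $(2,\eta)\subseteq \pi_{*,*}R'$.

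The second, crucial ingredient uses the normed (as opposed to merely $\scr E_\infty$) structure, via the machinery of motivic power operations from the forthcoming \cite{bachmann-powerops}. Paralleling the treatment of $\eta$ in \cite[Example 12.11]{bachmann-norms}, one compares the $C_2$-norm $N_{C_2/e}(x)$ with the ordinary square $x^2$; the discrepancy is governed by an equivariant transfer, and for $x$ in odd simplicial degree the resulting Frobenius-type congruence, combined with the relation from the first step, upgrades the weak ``$1\in(2,\eta)$'' conclusion to a genuine nilpotence statement that annihilates $R'$ modulo a controlled piece of $\H\F_2$-homology. The hypothesis $R' \wedge \H\F_2 \wequi 0$ then closes the argument, possibly after bootstrapping via Theorem \ref{thm:mainII} to propagate vanishing from $\H\F_2$ to $\MGL$ and using that the $\MGL$-action on $R'$ implicit in the power operation setup makes $R'$ a retract of $R' \wedge \MGL$.

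The main obstacle is the power operation step. The $\eta$-calculation in \cite{bachmann-norms} is a hands-on computation using the specific structure of the Hopf map, and extending it to an arbitrary odd-simplicial-degree class requires a systematic theory of motivic total power operations in normed spectra over a base with $1/2 \in S$, together with sharp control of the associated transfers and Thom classes. This is precisely what \cite{bachmann-powerops} develops; the delicacy lies in ensuring that the resulting relations are strong enough to combine with $\H\F_2$-vanishing (rather than the stronger $\H\Z$-vanishing) to force $R' \wequi 0$.
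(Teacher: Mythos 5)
The paper does not prove Theorem \ref{thm:bachmann-elmanto-heller}: it is cited from the forthcoming work \cite{bachmann-powerops} of Bachmann--Elmanto--Heller, and only the two remarks that follow it in the paper indicate the intended argument. That outline differs from yours in a decisive way. The paper's sketch is: by the Koszul sign rule, possibly after a finite separable extension (one making $\langle -1\rangle = 1$, e.g.\ adjoining $\sqrt{-1}$), $x$ in odd simplicial degree satisfies $2x^2 = 0$; hence a normed $R'$ on which $x$ acts invertibly has $2=0$ in $\pi_{0,0}R'$ over that extension. The hard technical input of \cite{bachmann-powerops} is then a motivic analog of Mahowald's theorem that an $\scr E_2$-ring --- a fortiori a normed spectrum --- with $2=0$ is an $\H\F_2$-algebra, cf.\ \cite[Theorem 4.18]{mathew2017nilpotence}. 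This makes $R'$ a retract of $R'\wedge\H\F_2 \wequi 0$, and one descends along the separable extension as in Lemma \ref{lemm:detect-zero-normed}.

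Your reduction to the vanishing claim about $R'$ is correct, and identifying the sign rule as the first ingredient is the right start. But the conclusion you draw, ``$1\in(2,\eta)$'', neither follows nor is the useful statement: from $(1+\langle -1\rangle^{q})x^2=0$ with $x$ invertible one gets $1+\langle -1\rangle^{q}=0$ in $\pi_{0,0}R'$, i.e.\ $2\in(\eta\rho)$, which does not put $1$ into $(2,\eta)$. The missing move is precisely the separable base change killing $\langle -1\rangle-1$, after which the relation is simply $2=0$. Your second and third steps bypass the Mahowald-type theorem, which is where the normed structure actually earns its keep, and instead propose a detour through Theorem \ref{thm:mainII} and $\MGL$-modules. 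That route does not close: you have not produced an $\MGL$-algebra structure on $R'$ to justify it being a retract of $R'\wedge\MGL$, and the appeal to ``Frobenius-type congruences annihilating $R'$ modulo a controlled piece of $\H\F_2$-homology'' is too vague to substitute for the Mahowald ingredient. The direct finish --- $2=0$, hence $R'$ is an $\H\F_2$-algebra, hence $R'\wequi 0$ --- is both cleaner and the one the paper's remark actually indicates.
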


\begin{remark}
The above theorem provides a proof, slightly different from that in \cite{bachmann-norms}, that $\eta$ is normed nilpotent in $\1 \in \NAlg(\SH(\C))$.
\end{remark}

\begin{remark}
Possibly after a finite separable extension, the Koszul sign rule will imply that $x^2$ is $2$-torsion in $\pi_{*,*} R$.  Theorem \ref{thm:bachmann-elmanto-heller} arises from a motivic analog of Mahowald's stable homotopy theory theorem that an $\scr E_{2}$-ring with $2=0$ is an $\H\mathbb{F}_2$-algebra \cite[Theorem 4.18]{mathew2017nilpotence}.
\end{remark}

When it comes to answering Question \ref{qst:normed-nilpotence} in general, the authors feel a great deal of progress would be made if one could understand the answer to the following:

\begin{question}
Suppose that $R$ is a normed spectrum such that some power $\tau^{k}$ of $\tau$ is $0$ in $\pi_{*,*}R$.  Must $R$ be null?
\end{question}

\begin{remark}
In stable homotopy theory, the May Nilpotence Conjecture can be viewed as a strong restriction on the mixed characteristic of an $\scr E_{\infty}$-ring.  Specifically, one learns that if $H\mathbb{Q} \wedge R \simeq 0$, then $K(n) \wedge R \simeq 0$ for every Morava $K$-theory.  In \cite{hahn2016bousfield}, the second author observed that, even if $H\mathbb{Q} \wedge R \not\simeq 0$, one may deduce that $K(n) \wedge R \simeq 0$ whenever $K(n-1) \wedge R \simeq 0$.
\end{remark}

\begin{question}
For $R$ a normed spectrum in $\NAlg(\SH(S))$, does $K(n-1) \wedge R \simeq 0$ imply that $K(n) \wedge R \simeq 0$?  Are there similar relationships involving the $K(\beta_{ij})$ of \cite{krause2018motivicperiodicity}?
\end{question}

\subsection{Organization of the paper}
In \S\ref{sec:norm-and-transfer} we define the \emph{universal transfer} and \emph{universal norm} of elements in the homotopy groups of (normed) motivic spectra; the former exists because spectra are built out of infinite loop spaces, and the latter is a multiplicative version of the former that is special to normed spectra.
We establish some of their standard properties, which are largely analogous to the classical case.

In the next two sections, we restrict to the base scheme being a field.
After establishing some preliminary results in \S\ref{sec:completion-locn-MGL}, we study in \S\ref{sec:cohom-homology} the norm and transfer in the very special normed $\MGL$-module $R=R_n$.
Our key result is that any torsion normed $R$-module is zero, which is morally related to Tate vanishing in the $K(n)$-local category.
Along the way we compute the homotopy groups $\pi_{2*,*} R$, the cohomology groups $R^{2*,*} \B\mu_\ell$, and we show that the completed homology $\pi_{**}^\wedge \B\mu_\ell \hat\wedge R$ is finitely generated free.

In the final \S\ref{sec:detection} we first establish some general techniques for showing that a normed spectrum is zero.
For example, we show that this may be tested after an arbitrary separable field extension.
Then we combine all our work to deduce the main theorems.

\subsection{Conventions}
In \S\ref{sec:completion-locn-MGL} and \S\ref{sec:cohom-homology} we fix a base field $k$ of exponential characteristic $e \ne \ell$, and all our spectra will be implicitly $e$-periodized.  We make some additional technical assumptions on $k$ in \S\ref{sec:cohom-homology}, elaborated on at the beginning of that section.

We will use the notion of normed spectrum as set out in \cite{bachmann-norms}.
We deviate from the terminology in this reference in one aspect: we will only deal with ``$\Sm$-normed spectra'', and just call them normed spectra; we denote the category of normed spectra over a scheme $S$ by $\NAlg(\SH(S))$.

Throughout we freely use the language of $\infty$-categories, as set out in \cite{lurie-htt,lurie-ha}.

\subsection{Acknowledgments}
Tom Bachmann was supported by NSF Grant DMS-1906072, and Jeremy Hahn by NSF Grant DMS-1803273.  We thank Peter May for a comment improving the exposition.

\section{Norms and transfers in cohomology}
\label{sec:norm-and-transfer}

If $E \in \SH(S)$, then $E$ represents a cohomology theory on smooth $S$-schemes, with all the usual structures.
For example, if $f: X \to Y \in \Sm_S$ is a finite étale morphism, then there is a transfer map $\tr_{X/Y}: E^0(X) \to E^0(Y)$ (to be contrasted with the pullback map $E(Y) \to E(X)$ which of course also exists), which is an additive homomorphism.
If $E \in \NAlg(\SH(S))$, then the cohomology theory is multiplicative, and in particular there is a norm map $N_{X/Y}: E^0(X) \to E^0(Y)$, which is a morphism of multiplicative monoids.
The definition of these maps only uses that $E$ has ``incoherent'' addition and multiplication maps, i.e. for example $N_{X/Y}$ is obtained using the multiplication map $f_\otimes E|_X \to E|_Y$, and similarly for the transfer.
But $E$ actually carries \emph{coherently commutative} addition and multiplication maps, and we can use this structure to build a more complicated transfer map, essentially by taking the homotopy colimit over all transfers along finite étale extensions of a certain type.
For example, the map $\mathrm{E} \Sigma_n \to \B \Sigma_n$ is the ``universal degree $n$ finite étale morphism'', and so one might expect to build a transfer (and norm) $E^0(\mathrm{E} \Sigma_n) \to E^0(\B\Sigma_n)$.
Since $\mathrm{E} \Sigma_n$ is contractible, thus just takes the form \[ \Tr, \N: E^0(S) \to E^0(\B\Sigma_n). \]

In the current section we implement this idea.

\subsection{Extended powers}
We make use of the motivic extended powers $D_{C_n}$, defined in \cite[\S5]{bachmann-colimits}.
In particular these are functors $D_{C_n}: \SH(S) \to \SH(S)$ satisfying $D_{C_n}(\1) \wequi \Sigma^\infty_+ \B C_{n}$, where $\B C_n$ denotes the \emph{geometric} or \emph{étale} classifying space \cite[\S4]{A1-homotopy-theory}.
If $E \in \NAlg(\SH(S))$, then $E$ comes equipped with multiplication maps $m: D_{C_n}(E) \to D_n(E) \to E$.

We need to be a little bit more precise.
Given $E \in \SH(S)$, the evident $\Sigma_n$-action on $E^{\wedge n}$ refines to a ``genuine motivic'' action, yielding $E^{\wedge \ul n} \in \SH^{\Sigma_n}(E)$ \cite[\S6.1]{bachmann-colimits}.
Let us denote by $E^{\wedge C_n} \in \SH^{C_n}(S)$ the result of forgetting from the $\Sigma_n$-action to the $C_n$-action.
Then we have the formula \[ D_{C_n}(E) = E^{\wedge C_n} \wedge_{C_n} \EE C_n, \] where $\EE C_n$ is the universal motivic space with a free $C_n$-action \cite[\S3.1]{gepner-heller}, and $(\ph) \wedge_{C_n} \EE C_n$ denotes the composite \cite[\S5.2]{gepner-heller} \[ \SH^{C_n}(S) \xrightarrow{\wedge \EE C_n} \SH^{C_n}(S)^\free \xrightarrow{(\ph)/C_n} \SH(S). \]

Let us also recall that in order to have a good six functors formalism for $\SH^{C_n}(\ph)$, one needs to assume that $1/|C_n| \in S$ \cite[\S1.2]{hoyois-equivariant}.
This is needed for example if we want finite étale $S$-schemes (with non-trivial action) to be self-dual in $\SH^{C_n}(S)$.\tombubble{At least I don't know any other proof.}

\subsection{Transfer}
We assume throughout that $1/n \in S$.
\begin{definition}
Let $E \in \SH(S)$ be arbitrary.
\begin{enumerate}
\item The \emph{universal transfer map} $\Sigma^\infty_+ \B C_n \to \1$ is obtained by applying $(\ph) \wedge_{C_n} \EE C_n$ to the transfer map $* \to \Sigma^\infty_+ C_n \in \SH^{C_n}(S)$ (obtained using the Wirthmüller/ambidexterity isomorphism).
\item The \emph{stable transfer for $E$} is the map \[ \Tr: E \to E^{\B C_n} \] obtained by applying $\imap(\ph, E)$ to the universal transfer map.
\item The \emph{restriction for $E$} is the map \[ \Res: E^{\B C_n} \to E \] obtained by applying $\imap(\ph, E)$ to the restriction $* \wequi \B e \to \B C_n$.
\end{enumerate}
\end{definition}
\begin{example}
We are mostly interested in the effect of the transfer on homotopy groups.
There it takes the form \[ \Tr: E^{**} \to E^{**}(\B C_n), \] and the restriction map just becomes the pullback map \[ \Res: E^{**}(\B C_n) \to E^{**}(\B e) \wequi E^{**}. \]
\end{example}

\begin{remark}
The transfer and restriction implicitly depend on the integer $n$, which we suppress in the notation.
\end{remark}

\begin{remark} \label{rmk:general-transfer}
The universal transfer construction yields more generally a map $D_{C_n}(X) \to X^{\wedge n}$ for every $X \in \SH(S)$: take geometric homotopy orbits of the transfer map $X^{C_n} \to X^{C_n} \wedge \Sigma^\infty_+ C_n$.
\end{remark}

\begin{lemma} \label{lemm:transfer-properties}
\begin{enumerate}
\item Restriction and transfer are natural in $E$.
\item $\Tr(x + y) = \Tr(x) + \Tr(y)$.
\item The composite $\Res \circ \Tr: E \to E$ is homotopic to the morphism of multiplication by $n$.
\item Suppose that $E$ is a homotopy associative ring spectrum.
  Then the projection formula holds: for $x \in E^{**}, y \in E^{**}(\B C_n)$ we have $\Tr(x \Res(y)) = \Tr(x)y$.
\end{enumerate}
\end{lemma}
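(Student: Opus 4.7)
The plan is as follows. Parts (1) and (2) are formal: naturality in $E$ follows immediately because both $\Tr$ and $\Res$ are defined by applying $\imap(-, E)$ to morphisms of spectra that do not depend on $E$, and additivity of $\Tr$ on $\pi_{*,*}$ is automatic since $\Tr$ is a morphism in the stable $\infty$-category $\SH(S)$.

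For part (3), I would unwind $\Res \circ \Tr$ via the contravariance of $\imap(-, E)$ to a self-map $t \circ r \colon \1 \to \Sigma^\infty_+ \B C_n \to \1$, and reduce to showing that $t \circ r = n \cdot \id_\1$. The cleanest argument uses base change for the finite étale cover $p \colon \EE C_n \to \B C_n$ applied to the pullback of $p$ along the basepoint inclusion $* \to \B C_n$, whose fiber is the trivial $C_n$-torsor $q \colon C_n \to *$. Base change yields the spectrum-level identity $t \circ r = c \circ t_q$, where $t_q \colon \1 \to \Sigma^\infty_+ C_n$ is the transfer of $q$ and $c \colon \Sigma^\infty_+ C_n \to \1$ is the stabilized summation. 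Since $q$ is trivial, $t_q$ is a diagonal inclusion and $c$ a fold, so the composite $c \circ t_q$ is manifestly $n \cdot \id_\1$.

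For part (4), I would first prove the spectrum-level identity
\[ (t \wedge \id) \circ \Delta \;=\; r \circ t \colon \Sigma^\infty_+ \B C_n \longrightarrow \Sigma^\infty_+ \B C_n \]
(after the identification $\1 \wedge \Sigma^\infty_+ \B C_n \cong \Sigma^\infty_+ \B C_n$), where $\Delta$ is the suspension of the diagonal of $\B C_n$. This is another instance of base change for finite étale transfers, applied to the pullback of $p \times \id$ along $\Delta \colon \B C_n \to \B C_n \times \B C_n$, which recovers $p$ itself. Granting this identity, the projection formula $\Tr(x \Res(y)) = \Tr(x) y$ follows by expanding both sides as composites of spectra, using the definition of the cup product on $E^{**}(\B C_n)$ via $\Delta$ and $\mu \colon E \wedge E \to E$; both reduce to $\mu \circ (x \wedge y) \circ (r \circ t)$.

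The main obstacle, and the substantive step, will be justifying the base change invocations in the motivic equivariant setting. As already noted in this section, under the assumption $1/n \in S$ the six-functor formalism for $\SH^{C_n}(-)$ is well-behaved (in particular finite $C_n$-sets are self-dual), so the classical base change arguments should transport without essential change.
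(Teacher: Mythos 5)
Your plan is correct; parts (1) and (2) coincide with the paper's argument, and for (3) and (4) you arrive at precisely the two spectrum-level identities the paper establishes (that the composite $t \circ r \colon \1 \to \Sigma^\infty_+\B C_n \to \1$ of the universal restriction and universal transfer is $n\cdot\id$, and that the left square in the projection-formula diagram commutes, i.e.\ $(t \wedge \id)\circ\Delta = r\circ t$). The route is genuinely different, though. You reduce both to Beck--Chevalley base change along Cartesian squares whose right edge is $\EE C_n \to \B C_n$: for (3), pulled back along the base point to give the trivial torsor $C_n \to *$, and for (4), pulled back along the diagonal of $\B C_n$ to recover $\EE C_n \to \B C_n$ itself. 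The paper instead never applies base change to $\EE C_n \to \B C_n$ directly. It works one step earlier, in $\SH^{C_n}(S)$, identifying the relevant composites as images under the additive symmetric monoidal span functor $\Sigma^\infty_\fet\colon\Span(\Sm_S^{C_n},\fet,\all)\to\SH^{C_n}(S)$ of explicit spans of finite $C_n$-sets (for (3), the decomposition of $C_n\leftarrow C_n\times C_n\to C_n$ into $n$ copies of $\id_{C_n}$; for (4), an isomorphism of two spans in $\Span(\Sm_S^{C_n\times C_n},\fet,\all)$), and only then applies $(-)\wedge_{C_n}\EE C_n$. The underlying finite-set combinatorics is the same, so both approaches work; the tradeoff is that your version is shorter to state but relies on extending base change for finite étale transfers to the non-schematic, colimit-presented cover $\EE C_n \to \B C_n$, which you rightly flag as the substantive step. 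The paper's span-first formulation sidesteps exactly that issue by doing the full computation with honest $C_n$-schemes before geometric homotopy orbits is ever invoked.
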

\begin{proof}
(1) is clear by construction, as is (2) since the transfer comes from a map of spectra.

(3) In $\SH^{C_n}(S)$ we have the canonical map $\Sigma^\infty_+ C_n \to \1$ (corresponding to the map of $C_n$-sets $C_n \to *$).
Applying $(\ph) \wedge_{C_n} \EE C_n$ we obtain the restriction $* \to \B C_n$\todo{is that totally obvious?}.
We wish to determine the composite of universal restriction and transfer $\1 \to \Sigma^\infty_+ \B C_n \to \1$; by the above this map is obtained by applying $(\ph) \wedge_{C_n} \EE C_n$ to the composite \[ c: \Sigma^\infty_+ C_n \to \1 \to \Sigma^\infty_+ C_n \in \SH^{C_n}(S). \]
As proved in \cite[Lemma 7.11]{bachmann-MGM} (set $G=\{1\}$), there is an additive symmetric monoidal functor \[ \Sigma^\infty_\fet: \Span(\Sm_S^{C_n}, \fet, \all) \to \SH^{C_n}(S)\] such that the image of the backwards map $* \leftarrow C_n \xrightarrow{\wequi} C_n$ is the transfer map $\1 \to \Sigma^\infty_+ C_n$.
Consequently $c$ is the image of the composite span \[ C_n \xleftarrow{\wequi} C_n \to * \leftarrow C_n \xrightarrow{\wequi} C_n \wequi C_n \leftarrow C_n \times C_n \to C_n, \] which is just given by the sum of the maps $m_a: C_n \to C_n, x \mapsto xa$ for $a \in C_n$.
Since $m_a \times_{C_n} \EE C_n \wequi \id_*$ for every $a$, the result follows.\todo{we had some doubts about this...}

(4) We need to show that the two longest paths in the following diagram represent homotopic maps
\begin{equation*}
\begin{CD}
\Sigma^\infty_+ \B C_n @>{\Delta}>> \Sigma^\infty_+ \B C_n \wedge \Sigma^\infty_+ \B C_n \\
@V{\Tr}VV                           @V{\Tr \wedge \id}VV \\
\1                     @>{\Res}>>   \1 \wedge \Sigma^\infty_+ \B C_n @>{x \wedge y}>> E \wedge E @>m>> E.
\end{CD}
\end{equation*}
For this it suffices to show that the left hand square commutes.
It is obtained by applying $\Sigma^\infty_\fet[(\ph) \wedge_{C_n \times C_n} \EE \Sigma_2]$ to the following diagram in $\Span(\Sm_S^{C_n \times C_n}, \fet, \all)$
\begin{equation*}
\begin{CD}
C_n \times C_n/\Delta @>>> * \times * \\
@VVV                       @VVV       \\
C_n \times C_n        @>>> C_n \times *.
\end{CD}
\end{equation*}
Here $\Delta \subset C_n \times C_n$ denotes the diagonal subgroup, the horizontal maps are the obvious ones (just maps of sets), and the vertical ones are the obvious transfers.
The two composites are represented by the spans \[ C_n \times C_n/\Delta \leftarrow (C_n \times C_n/\Delta) \times (C_n \times *) \to C_n \times * \text{ and } C_n \times C_n/\Delta \leftarrow C_n \times C_n \to C_n \times *. \]
These are easily verified to be isomorphic.\todo{do it?}
\end{proof}


\subsection{Norm}
\begin{definition}
Let $E \in \NAlg(\SH(S))$. The \emph{norm map for $E$-cohomology} is the map $\N: E^0 \to E^0(\B C_n)$ which sends $x: \1 \to E$ to the composite \[ \Sigma^\infty_+ \B C_n \wequi D_{C_n}(\1) \xrightarrow{D_{C_n}(x)} D_{C_n}(E) \xrightarrow{m} E. \]
\end{definition}
\begin{remark} \label{rmk:norm-spacelevel}
The norm map is obtained by applying $\pi_0$ to the total power operation $P_{C_n}: E_0 \to \ul{\Map}(\B C_n, E_0)$ of \cite[Example 7.25]{bachmann-norms}.
\end{remark}

\begin{lemma} \label{lemm:DCp-sum}
Let $n=p$ be a prime, and $1/p \in S$.
There exists an equivalence of functors $\SH(S) \times \SH(S) \to \SH(S)$ as follows \[ D_{C_p}(E \vee F) \wequi D_{C_p}(E) \vee D_{C_p}(F) \vee \bigvee_s (E, F)^{\wedge s}. \]
Here the sum at the end is over strings of length $p$ in $\{E,F\}$ containing at least one copy of $E$ and one copy of $F$, up to cyclic permutation, and $(E,F)^{\wedge s}$ denotes the corresponding smash product of $E$s and $F$s.\footnote{Strictly speaking, in order to make this into a well-defined functor, we need to pick a representative for each equivalence class. We tacitly assume that this has been done.}
This decomposition has the following properties.
\begin{enumerate}
\item If $x: E \to E'$ and $y: F \to F'$ are morphisms, then the induced map $D_{C_p}(x \vee y): D_{C_p}(E \vee F) \to D_{C_p}(E' \vee F')$ is given by \[ D_{C_p}(x) \vee D_{C_p}(y) \vee \bigvee_s (x,y)^{\wedge s}. \]
\item If $E=F$ and $\Delta: E \to E \vee E$ is the diagonal map, then the map \[ D_{C_p}(E) \xrightarrow{D_{C_p}(\Delta)} D_{C_p}(E \vee E) \wequi D_{C_p}(E) \vee D_{C_p}(E) \vee \bigvee_s E^{\wedge p} \] is given in components by $(\id, \id, \tr, \tr, \tr, \dots, \tr)$, where $\tr: D_{C_p}(E) \to E^{\wedge p}$ is the generalized universal transfer of Remark \ref{rmk:general-transfer}.
\item If $E=F$ and $\nabla: E \vee E \to E$ is the fold map, then the map \[ D_{C_p}(E) \vee D_{C_p}(E) \vee \bigvee_s E^{\wedge p} \wequi D_{C_p}(E \vee E) \xrightarrow{D_{C_p}(\nabla)} D_{C_p}(E) \] is given in components by $(\id, \id, c, c, \dots, c)$, where $c: E^{\wedge p} \to D_{C_p}(E)$ is the canonical map.
\end{enumerate}
\end{lemma}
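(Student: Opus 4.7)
My plan is to establish the decomposition by distributing $(E \vee F)^{\wedge p}$ over the wedge and exploiting the primality of $p$. Expanding,
\[ (E \vee F)^{\wedge p} \simeq \bigvee_{\sigma \colon \{1,\dots,p\} \to \{E,F\}} (E,F)^{\wedge \sigma}, \]
where $C_p$ acts by cyclic shift on $\{1,\dots,p\}$, simultaneously permuting the wedge summands via the induced action on $\sigma$ and permuting the tensor factors within each summand. Since $p$ is prime, each $\sigma$ is either constant (with one-element orbit, contributing the all-$E$ or all-$F$ summand on which the action is cyclic permutation of factors) or has free $C_p$-orbit of cardinality exactly $p$. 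This identifies
\[ (E \vee F)^{\wedge C_p} \simeq E^{\wedge C_p} \vee F^{\wedge C_p} \vee \bigvee_{[s]} \Ind_e^{C_p}\bigl( (E,F)^{\wedge s} \bigr) \in \SH^{C_p}(S), \]
where $[s]$ ranges over non-constant cyclic orbits and $s$ is a chosen representative. Applying $(\ph) \wedge_{C_p} \EE C_p$ gives $D_{C_p}(E) \vee D_{C_p}(F)$ from the first two terms by definition, and $(E,F)^{\wedge s}$ from each induced summand, since free $C_p$-spectra are unchanged by $\wedge \EE C_p$ up to equivalence and the $C_p$-quotient of $\Ind_e^{C_p}(X)$ is $X$. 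Property (1) then follows from the functoriality of every step.

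For property (2), since both projections $E \vee E \to E$ compose with $\Delta$ to the identity, the map $\Delta^{\wedge p}$ projects to each summand $E_\sigma^{\wedge p}$ of $(E \vee E)^{\wedge p}$ via the identity. On the two constant orbits this gives the identity on $D_{C_p}(E)$. On a non-constant orbit of size $p$, the resulting $C_p$-equivariant map $E^{\wedge C_p} \to \Ind_e^{C_p}(E^{\wedge p})$ is determined by adjunction by its component in the summand indexed by $e \in C_p$, which is the identity. By the self-duality of $\Sigma^\infty_+ C_p$ in $\SH^{C_p}(S)$, this coincides with the transfer map of Remark \ref{rmk:general-transfer}, and applying $(\ph) \wedge_{C_p} \EE C_p$ produces $\tr \colon D_{C_p}(E) \to E^{\wedge p}$.

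Property (3) is dual: $\nabla^{\wedge p}$ restricts to the identity on each summand of $(E \vee E)^{\wedge p}$, so on a non-constant orbit it is the fold $\Ind_e^{C_p}(E^{\wedge p}) \to E^{\wedge C_p}$, which is the counit of the induction-restriction adjunction. After $(\ph) \wedge_{C_p} \EE C_p$, the source becomes $E^{\wedge p}$ and the target becomes $D_{C_p}(E)$, and this counit becomes the canonical quotient map $c \colon E^{\wedge p} \to D_{C_p}(E)$.

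The main obstacle is the precise identification in step (2) of the equivariant map $E^{\wedge C_p} \to \Ind_e^{C_p}(E^{\wedge p})$ with the transfer. In the spirit of Lemma \ref{lemm:transfer-properties}(3), the cleanest approach is to reduce to $E = \1$ by smashing with $E^{\wedge p}$ and then check the claim at the level of the span category $\Span(\Sm_S^{C_p}, \fet, \all)$ via the functor $\Sigma^\infty_\fet$ of \cite{bachmann-MGM}, where it becomes a combinatorial identification of underlying spans.
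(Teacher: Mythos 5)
Your argument is correct, and for the main decomposition and parts (1) and (3) it is essentially the route the paper takes: expand $(E \vee F)^{\wedge C_p}$ over cyclic orbits in $\SH^{C_p}(S)$, note that primality of $p$ forces every non-constant orbit to be free, and apply $(\ph) \wedge_{C_p} \EE C_p$. Where you genuinely diverge is part (2). The paper handles the identification of the non-constant component with the transfer by descending to the \emph{unstable} level: it determines the analogous natural transformation in $\Fun(\Spc^\fet(S), \Spc^{C_p,\fet}(S))$, observes that the stable transformation is obtained from it by base change along $\Spc^\fet(S) \to \SH(S)$ using the $\Spc^\fet(S)$-module structure on $\Spc^{C_p,\fet}(S)$, and concludes. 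You instead stay in $\SH^{C_p}(S)$ and argue via ambidexterity: since $1/p \in S$, the Wirthm\"uller isomorphism makes $\Ind_e^{C_p}$ a two-sided adjoint of restriction, so a $C_p$-equivariant map $E^{\wedge C_p} \to \Ind_e^{C_p}(E^{\wedge p})$ is pinned down by its underlying projection to the $e$-summand; both the orbit-component of $\Delta^{\wedge C_p}$ and the transfer correspond to the identity, hence agree. This is a cleaner, purely stable argument and buys you a uniform treatment of (2) and (3) as formal consequences of the $\Ind \dashv \Res \dashv \mathrm{Coind}$ adjunctions, whereas the paper's route requires a separate passage to unstable presheaves and a module-category base-change step (which the source itself flags as needing more detail). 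One small caveat: your concluding remark about reducing to $E = \1$ ``by smashing with $E^{\wedge p}$'' should be treated with care, since the genuine $C_p$-structure on $E^{\wedge C_p}$ is the norm construction and is not literally $\1^{\wedge C_p} \wedge E^{\wedge p}$; fortunately your primary ambidexterity argument does not require that reduction, and the fallback via $\Span(\Sm_S^{C_p}, \fet, \all)$ is closer in spirit to what the paper actually does.
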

\begin{proof}
We have $D_{C_p}(E \vee F) \wequi (E \vee F)^{C_p} \wedge_{C_p} \EE C_p$.
Moreover there is a canonical decomposition \[ (E \vee F)^{C_p} \wequi E^{C_p} \vee F^{C_p} \vee \bigvee_s C_{p+} \wedge (E,F)^{\wedge s} \in \SH^{C_p}(S). \]
This immediately implies (1), and (3) is readily verified.

To prove (2), it suffices to determine the natural transformation \[ (\ph)^{C_p} \xrightarrow{(\nabla)^{C_p}} (\ph \vee \ph)^{C_p} \in \Fun(\SH(S), \SH^{C_p}(S)). \]
The unstable version of this transformation, i.e. a morphism in $\Fun(\Spc^\fet(S), \Spc^{C_p,\fet}(S))$ is readily determined.\tombubble{And this is really all we need, since we will apply (2) for $E=\1$...}
We can make $\Spc^{C_p,\fet}(S)$ into a $\Spc^\fet(S)$-module via $X \mapsto X^{C_p}$; then we have a transformation of $\Spc^{\fet}(S)$-module functors.
Base change along $\Spc^\fet(S) \to \SH(S)$ yields the transformation we were seeking to determine.\todo{details?}
The result follows.
\end{proof}

\begin{lemma} \label{lemm:norm-props}
Let $E \in \NAlg(\SH(S))$.
\begin{enumerate}
\item The norm is natural in $E \in \NAlg(\SH(S))$.
\item We have $\N(1) = 1$ and $\N(0) = 0$, where we denote by $1$ the multiplicative unit in both $E^0$ and $E^0(\B C_n)$.
\item For $x, y \in E^0$ we have $\N(xy) = \N(x)\N(y)$.
\item For $x, y \in E^0$ and $n=p$ a prime with $1/p \in S$ we have \[ \N(x + y) = \N(x) + \N(y) + \Tr\left(\sum_{i \le j, i + j=p} \frac{{p \choose i}}{p} x^i y^j \right).\]
\end{enumerate}
\end{lemma}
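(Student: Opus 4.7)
The plan is to dispatch parts (1)–(3) by formal manipulations with the normed structure on $E$, and to prove (4) by the main computation, expanding $D_{C_p}(x+y)$ via Lemma \ref{lemm:DCp-sum}.

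For (1), naturality is immediate, since each ingredient of $\N$—the functor $D_{C_n}$ and the normed multiplication $m: D_{C_n}(E) \to E$—is natural in $E \in \NAlg(\SH(S))$. For $\N(1)=1$, I would invoke the unit axiom for the normed structure: with $e: \1 \to E$ the unit, the composite $D_{C_n}(\1) \xrightarrow{D_{C_n}(e)} D_{C_n}(E) \xrightarrow{m} E$ is homotopic to the constant-at-unit map $\Sigma^\infty_+ \B C_n \simeq D_{C_n}(\1) \to \1 \xrightarrow{e} E$. For $\N(0)=0$, the zero map factors through the zero spectrum and $D_{C_n}$ preserves zero objects. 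For (3), I would write $xy = \mu \circ (x \wedge y)$ using $\1 \simeq \1 \wedge \1$, apply $D_{C_n}$, and exploit the compatibility of the lax symmetric monoidal structure on $D_{C_n}$ with the normed multiplication; concretely, the total power operation $P_{C_n}$ of Remark \ref{rmk:norm-spacelevel} is a ring map, so $\N(xy) = \N(x)\N(y)$.

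The main work is part (4). The plan is to decompose
\[ x+y = \nabla \circ (x \vee y) \circ \Delta \colon \1 \to \1 \vee \1 \to E \vee E \to E, \]
where $\Delta$ is the (stable) diagonal and $\nabla$ the fold, apply $D_{C_p}$, and then invoke Lemma \ref{lemm:DCp-sum} to expand the resulting triple composite. By parts (1)–(3) of that lemma, $D_{C_p}(\Delta)$ has components $(\id, \id, \tr, \dots, \tr)$ with $\tr : D_{C_p}(\1) \to \1^{\wedge p} = \1$ the universal transfer; $D_{C_p}(x \vee y)$ has components $D_{C_p}(x), D_{C_p}(y), (x,y)^{\wedge s}$; and $D_{C_p}(\nabla)$ has components $(\id, \id, c, \dots, c)$ with $c : E^{\wedge p} \to D_{C_p}(E)$ the canonical map. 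Composing, the two ``pure'' summands contribute $D_{C_p}(x) + D_{C_p}(y)$, which after postcomposition with $m$ yield $\N(x) + \N(y)$; each non-pure cyclic class $s$ of strings in $\{x,y\}$ of content $(i, p-i)$ contributes $c \circ (x,y)^{\wedge s} \circ \tr$, which after postcomposition with $m$ becomes $\Tr(x^i y^{p-i}) \in E^0(\B C_p)$, using that $m \circ c : E^{\wedge p} \to E$ is the $p$-fold product map (a structure axiom of a normed spectrum).

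To conclude, I would sum over cyclic classes: since $p$ is prime, every non-constant length-$p$ string has no nontrivial period, so each cyclic class has exactly $p$ elements, and the number of classes of content $(i, p-i)$ with $0 < i < p$ is $\binom{p}{i}/p$. Combining the resulting contributions and using additivity of $\Tr$ yields the stated formula. I expect the main obstacle to be the bookkeeping in part (4): correctly reading off each component of the triple composite $D_{C_p}(\nabla) \circ D_{C_p}(x \vee y) \circ D_{C_p}(\Delta)$ from Lemma \ref{lemm:DCp-sum}, and verifying that $m \circ c$ coincides with the $p$-fold product—both are routine consequences of the lemma and the normed structure axioms, but require careful tracking of the many summands.
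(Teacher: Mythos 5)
Your approach coincides with the paper's: parts (1)--(3) are handled formally via the normed/power-operation structure (the paper's one-liner citing Remark \ref{rmk:norm-spacelevel} and \cite[\S7.2]{bachmann-norms}), and part (4) is exactly the ``decategorification of Lemma \ref{lemm:DCp-sum}'' the paper invokes, carried out explicitly by writing $x+y = \nabla\circ(x\vee y)\circ\Delta$, applying $D_{C_p}$, and reading off the three components of the triple composite. Your identification of $m\circ c$ with the $p$-fold multiplication and your orbit count ($\binom{p}{i}/p$ cyclic classes of content $(i,p-i)$ for $0<i<p$, using primality of $p$ to get free orbits) are both correct.

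One point of caution at the very end: your computation gives
\[
\N(x+y) \;=\; \N(x)+\N(y)+\sum_{i=1}^{p-1}\frac{\binom{p}{i}}{p}\,\Tr(x^i y^{p-i}),
\]
i.e.\ the transfer term ranges over \emph{all} $i$ with $1\le i\le p-1$. This does not literally agree with the displayed formula in the lemma, whose sum is over $i\le j$, $i+j=p$ (so only $1\le i\le (p-1)/2$ for odd $p$). For $p=3$, for instance, your derivation yields $\Tr(xy^2)+\Tr(x^2y)$, not merely $\Tr(xy^2)$. Your version is the correct one: applying $\Res$ (using $\Res\circ\N(a)=a^p$ and $\Res\circ\Tr=p\cdot$) to your formula recovers $(x+y)^p$ exactly, whereas the literal $i\le j$ formula does not. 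The lemma's stated indexing appears to be a slip; so you should not conclude that the computation ``yields the stated formula'' without flagging the discrepancy -- rather, your computation gives the formula the lemma should state.
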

\begin{proof}
(1), (2) and (3) are clear (e.g. combine Remark \ref{rmk:norm-spacelevel} with \cite[\S7.2]{bachmann-norms}).
(4) is just a decategorification of Lemma \ref{lemm:DCp-sum}.
\end{proof}

\begin{corollary} \label{cor:norm-trick}
Let $n=p$ a prime with $1/p \in S$.
Then \[ \N(p^k) \equiv -p^{k-1} \Tr(1) \pmod{p^k} \in \pi_0(E). \]
\end{corollary}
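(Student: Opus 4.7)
The plan has three stages: first compute $\N(p)$ in closed form; second, reduce $\N(p^k)$ to $\N(p)^k$ by multiplicativity; third, simplify using the power identity $\Tr(1)^j = p^{j-1}\Tr(1)$.

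\emph{Stage 1: Compute $\N(p)$.} I would apply Lemma~\ref{lemm:norm-props}(4) iteratively with $x = n$, $y = 1$. The combinatorial sum inside the transfer collapses (via the binomial theorem) to the integer $a_n = ((n+1)^p - n^p - 1)/p$, so $\N(n+1) - \N(n) = 1 + a_n\,\Tr(1)$, using additivity of $\Tr$ (Lemma~\ref{lemm:transfer-properties}(2)) together with $\Tr(c) = c\,\Tr(1)$ for $c\in\Z$. Telescoping from $n = 1$ to $n = p-1$, together with $\N(1) = 1$ from Lemma~\ref{lemm:norm-props}(2), yields
$$\N(p) = p + (p^{p-1}-1)\,\Tr(1),$$
since $\sum_{n=1}^{p-1} a_n = (p^p - p)/p = p^{p-1}-1$.

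\emph{Stage 2: The power identity.} The projection formula (Lemma~\ref{lemm:transfer-properties}(4)) with $x=1$, $y=\Tr(1)$, combined with $\Res(\Tr(1)) = p$ from Lemma~\ref{lemm:transfer-properties}(3), gives $\Tr(1)^2 = \Tr(\Res(\Tr(1))) = p\,\Tr(1)$; induction on $j$ then produces $\Tr(1)^j = p^{j-1}\,\Tr(1)$ for all $j\geq 1$.

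\emph{Stage 3: Assemble and reduce.} Writing $\N(p) = p + u\,\Tr(1)$ with $u = p^{p-1}-1$ and using multiplicativity (Lemma~\ref{lemm:norm-props}(3)) to get $\N(p^k) = \N(p)^k$, the binomial theorem combined with the power identity collapses each $i\geq 1$ term $\binom{k}{i}p^{k-i}u^i\Tr(1)^i$ into $\binom{k}{i}p^{k-1}u^i\Tr(1)$, giving
$$\N(p^k) = p^k + p^{k-1}\bigl((1+u)^k - 1\bigr)\Tr(1) = p^k + p^{k-1}\bigl(p^{k(p-1)} - 1\bigr)\Tr(1).$$
Modulo $p^k$, both $p^k$ and $p^{k-1}\cdot p^{k(p-1)} = p^{kp-1}$ vanish (the latter because $kp - 1 \geq k$ whenever $k\geq 1$ and $p\geq 2$), leaving the claimed $-p^{k-1}\,\Tr(1)$.

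The main subtlety is in Stage 1: correctly unpacking the combinatorial transfer sum of Lemma~\ref{lemm:norm-props}(4) so that evaluating at $y = 1$ produces exactly the integer $((n+1)^p - n^p - 1)/p$, allowing the telescoping to go through cleanly. Once that is established, Stages 2 and 3 are routine algebraic manipulations inside $\pi_0(E)$ using only the structural properties of norm, transfer, and restriction proved earlier.
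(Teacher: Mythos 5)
Your proof is correct, and it reaches the same closed formula $\N(p^k) = p^k + p^{k-1}\bigl(p^{k(p-1)}-1\bigr)\Tr(1)$ that the paper's argument produces, but by a genuinely different route. The paper proves directly, for \emph{any} integer $m \ge 1$, that $\N(m) = m + \tfrac{m^p-m}{p}\Tr(1)$, by decomposing the $C_p$-set $[m]^{C_p}$ into $m$ fixed points and $(m^p-m)/p$ free orbits and ``decategorifying'' as in Lemma~\ref{lemm:DCp-sum}; substituting $m = p^k$ and reducing modulo $p^k$ finishes immediately. You instead compute $\N(p)$ by iterating Lemma~\ref{lemm:norm-props}(4) with $y = 1$ and telescoping (which gives the $m=p$ case of the paper's formula), then invoke multiplicativity to write $\N(p^k)=\N(p)^k$ and collapse the binomial expansion via the identity $\Tr(1)^j = p^{j-1}\Tr(1)$, which you derive cleanly from the projection formula together with $\Res\Tr = p$. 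Your route is longer, and your Stage~3 must do more bookkeeping that the paper's approach renders unnecessary (it never needs the power identity for $\Tr(1)$ at all); on the other hand, your argument is entirely internal to the stated parts of Lemmas~\ref{lemm:transfer-properties} and~\ref{lemm:norm-props}, whereas the paper's one-line computation of $\N(m)$ requires re-opening the proof of Lemma~\ref{lemm:DCp-sum}. One remark on Stage~1: you correctly read the transfer sum in Lemma~\ref{lemm:norm-props}(4) as ranging over all $1 \le i \le p-1$ (so that evaluating at $y=1$ yields $\tfrac{(n+1)^p - n^p - 1}{p}$); the paper's index set ``$i \le j,\ i+j=p$'' is a slip, and your interpretation is the intended one.
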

\begin{proof}
Mirroring the proof of Lemma \ref{lemm:DCp-sum} and Lemma \ref{lemm:norm-props}(4), given any integer $m \ge 1$ we can decompose the $C_p$-set $[m]^{C_p}$ into $m$ points with trivial $C_p$-action and $(m^p-m)/p$ copies of $C_p$.
This tells us that $N(m) = m + (m^p - m)/p\Tr(1)$.
(Note that $m^p-m$ \emph{is} divisible by $p$.)
The result follows immediately.\tombubble{Maybe it would make more sense to derive this from Lemma \ref{lemm:norm-props}(4) directly...}
\end{proof}

\section{Completions, localizations, and quotients of \texorpdfstring{$\MGL$}{MGL}}
\label{sec:completion-locn-MGL}
We collect some preliminary results.
Starting from \S\ref{subsec:pi*-MGL}, $k$ is a field of exponential characteristic $e \ne \ell$, and all objects are implicitly $e$-periodized.

\subsection{Localization and completion of normed spectra}

\begin{proposition}
Let $k$ be a field, $E \in \NAlg(\SH(k))$ and $\alpha: E \to L$ be an $E$-module map for some $L \in \pi_0 Pic(E\Mod) = \Gamma$.
\begin{enumerate}
\item Suppose that for every finite separable field extension $L/K$ over $k$, the element $\N_{L/K}(\alpha_L)$ is a unit in the $\Gamma$-graded commutative ring $\pi_\star(E_K)[\alpha_K^{-1}]$.
  Then $E[\alpha^{-1}]$ is a normed spectrum.
\item Suppose that for every finite separable field extension $L/K$ over $k$, the element $\alpha_K$ is a unit in the $\Gamma$-graded commutative ring $\pi_\star(E_K)[\N_{L/K}(\alpha_L)^{-1}]$.
  Then $E_\alpha^\wedge$ is a normed spectrum.
\end{enumerate}
\end{proposition}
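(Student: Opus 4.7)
The plan is to use the general framework of \cite{bachmann-norms} for lifting localizations and completions from $\CAlg(\SH(k))$ to $\NAlg(\SH(k))$. A normed structure on $E$ is governed by the collection of norm functors $p_\otimes$ along finite \'etale maps $p: Y \to X \in \Sm_k$, and by the descent and base change properties of $\NAlg(\SH(-))$ it suffices to verify the required compatibilities after pulling back along residue field inclusions. Both parts of the proposition therefore reduce to checking: for every finite separable extension $L/K$ over $k$ and the induced $p: \mathrm{Spec}(L) \to \mathrm{Spec}(K)$, the functor $p_\otimes$ preserves $\alpha_L$-localization (for (1)) or $\alpha_L$-completion (for (2)) of $E_L$-modules.

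For part (1), write $E_K[\alpha_K^{-1}]$ as the filtered colimit of Picard-graded shifts of $E_K$ along multiplication by $\alpha_K$, and similarly over $L$. Since $p_\otimes$ preserves filtered colimits of commutative algebras and is compatible with base change, $p_\otimes(E_L[\alpha_L^{-1}])$ identifies with the localization of $p_\otimes(E_L)$ obtained by inverting the image of $\alpha_L$. This image, on $\pi_0$, is exactly $\N_{L/K}(\alpha_L)$ via the compatibility of the abstract motivic norm $p_\otimes$ on homotopy with the cohomological norm of Lemma~\ref{lemm:norm-props}. Consequently, the multiplication map $p_\otimes(E_L) \to E_K$ of the normed structure on $E$ extends to $p_\otimes(E_L[\alpha_L^{-1}]) \to E_K[\alpha_K^{-1}]$ precisely when $\N_{L/K}(\alpha_L)$ is a unit in $\pi_\star(E_K)[\alpha_K^{-1}]$, which is the hypothesis. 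These extensions assemble, by descent, into the desired normed structure on $E[\alpha^{-1}]$.

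Part (2) proceeds dually. The completion $E_\alpha^\wedge$ is the homotopy limit of the tower $E/\alpha^n$, equivalently the Bousfield localization in $E\Mod$ at maps inducing equivalences after smashing with $E/\alpha$. Extending the norm structure to $E_\alpha^\wedge$ amounts to showing that $p_\otimes$ preserves $\alpha$-completeness, or by passing to complements, that $p_\otimes$ sends $\alpha_L$-inverted $E_L$-modules to $\alpha_K$-inverted $E_K$-modules. By the same colimit computation as in (1), the class of objects on which $\alpha_L$ acts invertibly is sent by $p_\otimes$ to the class on which $\N_{L/K}(\alpha_L)$ acts invertibly, so the criterion becomes that $\alpha_K$ act invertibly on $p_\otimes(E_L[\N_{L/K}(\alpha_L)^{-1}])$, i.e.\ that $\alpha_K$ be a unit in $\pi_\star(E_K)[\N_{L/K}(\alpha_L)^{-1}]$ — exactly the hypothesis.

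The main obstacle will be verifying that these $\pi_\star$-level unit conditions genuinely produce a coherent normed structure rather than merely an $\scr E_\infty$-structure; that is, one must assemble the residue-field data into compatible norm functors for all finite \'etale covers of arbitrary smooth $k$-schemes. This hinges on the \'etale descent and base change properties for $\NAlg(\SH(-))$ developed in \cite{bachmann-norms}, together with the identification of $p_\otimes$ on $\pi_0$ with the cohomological norm from Section~\ref{sec:norm-and-transfer}; these ingredients must be carefully combined to handle the fact that $\alpha$ is Picard-graded rather than lying in bidegree $(0,0)$.
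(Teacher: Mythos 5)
Your proposal follows essentially the same route as the paper: reduce to checking that the norm functors $f_\otimes$ preserve $\alpha$-periodic (resp.\ $\alpha$-complete) equivalences of $E$-modules, reduce to the case where the target is a field using conservativity of pullback to residue fields together with compatibility of norms with base change, and then derive the unit criterion from the identification of $f_\otimes$ applied to the filtered-colimit presentation of the localization. The paper packages the first reduction and the colimit computation by citing the proofs of \cite[Propositions 12.6 and 12.14]{bachmann-norms}, whereas you unfold that argument; your closing caveat about coherence of the normed structure is precisely the content delegated to those propositions.
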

\begin{proof}
Examination of the proofs of \cite[Proposition 12.6, Proposition 12.14]{bachmann-norms} shows that it suffices to establish the following: for every finite (surjective) étale morphism $f: X \to Y \in \Sm_k$, the functor $f_\otimes: E_X\Mod \to E_Y\Mod$ preserves $\alpha$-periodic equivalences (respectively $\alpha$-complete equivalences).
Using \cite[Corollary 14]{bachmann-real-etale} and compatibility of norms with base change \cite[Proposition 5.3]{bachmann-norms}, for this we may assume that $Y$ is the spectrum of a field.
Then the desired result follows from our assumption, exactly as in the proofs of \cite[Proposition 12.6, Proposition 12.14]{bachmann-norms}.
\end{proof}

\begin{remark}
If $E$ is oriented and $L = T^n \wedge E$ for some $n$, then $\pi_{f_\otimes L}(E) \wequi \pi_{T^{[L:K]}}(E)$ canonically, and the notion of a $\Gamma$-graded commutative ring can be dispensed with in favor of the ordinary commutative graded ring $\pi_{2*,*}(E)$.
\end{remark}
\begin{example} \label{ex:locn-trivial}
If $\pi_{2*,*}(E)[\alpha_K^{-1}] = \pi_{2*,*}(\N_{L/K}(\alpha_L)^{-1})$, then the Proposition applies and $E[\alpha^{-1}]$, $E_\alpha^\wedge$ are normed spectra.
This happens in particular if $\N_{L/K}(\alpha_L) = \alpha_K^{[L:K]}$.
\end{example}

\subsection{The homotopy of MGL} \label{subsec:pi*-MGL}
Recall our convention that the exponential characteristic of $k$ has been inverted throughout; without this assumption the following result is (at the time of writing) not known.
\begin{lemma}[Spitzweck] \label{lemm:MGL-vanishing}
We have $\MGL_{2q,q}(K) = L_{2q}$ (where $L_{2q} = \pi_{2q} \MU$), $\MGL_{2q+1,q}(K) = K^\times \otimes L_{2q+2}$ and $\pi_{p,q}(\MGL)(K) = 0$ for $p < 2q$ (and also for $p<q$).
\end{lemma}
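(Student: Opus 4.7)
The proof should go by invoking the motivic Hopkins--Morel--Spitzweck identification of the slices of $\MGL$. Over a field $k$ in which the exponential characteristic has been inverted, Spitzweck (building on Hopkins--Morel and Levine) proves that
\[ s_q(\MGL) \simeq \Sigma^{2q,q} \H\Z \otimes_\Z L_{2q}, \]
and that the slice spectral sequence converges to $\pi_{**}(\MGL)(K)$ for any field extension $K/k$. This is the main input; everything else is bookkeeping.

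Feeding this into the slice spectral sequence, the $E_2$-page in tridegree $(p,q,w)$ is
\[ \pi_{p,w}(s_q \MGL) \;=\; \H^{2q-p,\,q-w}(\Spec K, \Z) \otimes_\Z L_{2q}. \]
Now apply the known motivic cohomology of a field: $\H^{a,b}(\Spec K, \Z)$ vanishes unless $0 \le a \le b$, and $\H^{a,a}(\Spec K, \Z) \cong K^M_a(K)$ by Nesterenko--Suslin/Totaro (so $\H^{0,0} = \Z$, $\H^{1,1} = K^\times$). Combined with the vanishing $L_{2q}=0$ for $q<0$, the inequalities $0 \le 2q-p \le q-w$ and $q \ge 0$ force the following: for $p=2w$ only $q=w$ contributes, giving $L_{2w}$; for $p=2w+1$ only $q=w+1$ contributes, giving $K^\times \otimes L_{2w+2}$; for $p<2w$ the range $p/2 \le q \le p-w$ is empty; and for $p<w$ (only relevant when $w<0$) every $q$ in the admissible range is negative and hence killed by the $L_{2q}$ factor.

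Next I would verify that no differential can alter these $E_2$-entries. A differential $d_r$ hits $E_r^{p,q,w}$ from $E_r^{p+1,q-r,w}$ or leaves it to $E_r^{p-1,q+r,w}$ with $r\ge 1$. In each of the two non-vanishing cases above, the same cohomological vanishing $\H^{a,b}=0$ for $a>b$ or $a<0$, together with $L_{\text{odd}}=0$, shows that the potential source and target are zero. Hence $E_2 = E_\infty$ in the asserted bidegrees, and zero elsewhere in the stated range.

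The only real content is invoking Spitzweck's slice computation and its convergence after inverting $e$; the cohomological range restriction is Beilinson--Soulé vanishing for fields (which is elementary from the Bloch cycle complex). The main obstacle, such as it is, is bookkeeping the two simultaneous constraints coming from motivic cohomology vanishing and from $L_* = \pi_*\MU$ being concentrated in non-negative even degrees; once these are combined the lemma drops out directly from the $E_2$-page.
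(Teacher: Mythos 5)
The paper's ``proof'' of this lemma is simply a citation to Spitzweck's work (\cite[Proposition 7.1, Corollary 7.4, Corollary 7.5]{SpitzweckMGL}), and your proposal correctly reconstructs the argument that sits behind that citation: identify the slices via Hopkins--Morel--Hoyois--Spitzweck, plug into the (strongly convergent) slice spectral sequence, and let the vanishing ranges of motivic cohomology of a field and of $L_* = \pi_*\MU$ do the rest. So the approach is the right one and the bookkeeping checks out.

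One correction to flag, though it does not affect the conclusion. You write that $\H^{a,b}(\Spec K,\Z)$ vanishes unless $0 \le a \le b$ and attribute this to ``Beilinson--Soul\'e vanishing ... which is elementary from the Bloch cycle complex.'' The bound $a \ge 0$ (for $b > 0$) \emph{is} the Beilinson--Soul\'e vanishing conjecture, and it is a genuinely open problem, not an elementary consequence of the cycle complex. What \emph{is} elementary is $\H^{a,b}(\Spec K,\Z)=0$ for $a > b$ (codimension-$b$ cycles in $\Delta^n$ require $b \le n$) and for $b<0$ (the complex $\Z(b)$ is zero). Fortunately your argument never actually needs the conjectural part: in your inequality chain, replace the constraint $q \ge p/2$ (which came from $2q-p \ge 0$) by the two elementary constraints $q \ge 0$ (from $L_{2q}=0$ for $q<0$) and $q \ge w$ (from the weight $q-w \ge 0$). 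For $p<2w$ one then needs $q \le p-w < w$, contradicting $q \ge w$; for $p<w<0$ the range is likewise empty. The differential analysis you sketch also only uses the elementary vanishing and $L_{\mathrm{odd}}=0$. So the proof goes through, but the citation to Beilinson--Soul\'e should be dropped in favour of the weight and Milnor-line vanishing.
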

\begin{proof}
See \cite[Proposition 7.1, Corollary 7.4 and Corollary 7.5]{SpitzweckMGL}.
\end{proof}

\subsection{Localizations and completions of $\MGL$}
\begin{corollary} \label{cor:loc-MGL}
Let $a_1, \dots, a_r, b_1, \dots, b_s \in \MGL_{2*,*}(k)$ and $E \in \NAlg(\MGL\Mod)$.
Then \[ E[b_1^{-1}, \dots, b_s^{-1}]_{a_1, \dots, a_r}^\wedge \] is a normed spectrum.

Similarly with $\MGL_{(\ell)}$ in place of $\MGL$.\tombubble{Not sure what the slickest way of formulating a general statement is.}
\end{corollary}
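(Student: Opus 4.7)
The plan is to reduce, via induction on $r+s$, to applying the preceding Proposition to a single element at a time, using Example~\ref{ex:locn-trivial} as the verification step. Since $E$ is an $\MGL$-module it is oriented, and each element $\alpha \in \MGL_{2n,n}(k)$ determines a class $\alpha_K \in \pi_{2n,n}(E_K)$ for every field $K/k$; these classes are compatible under pullback, so for any finite separable extension $L/K$ over $k$ with structure map $f\colon \Spec L \to \Spec K$, we have $\alpha_L = f^*\alpha_K$.

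The key technical input is the identity
\[ \N_{L/K}(\alpha_L) = \alpha_K^{[L:K]} \in \pi_{2n[L:K],\, n[L:K]}(E_K). \]
This is the familiar fact that in a Tambara-type structure the norm of a pulled-back class is a power. On the spectrum level it follows from the symmetric monoidality of the norm functor $f_\otimes$ (so that $f_\otimes \1_L \wequi \1_K$), combined with the interpretation of the normed structure on $E$ via the multiplication $f_\otimes(E|_L) \to E|_K$; together these force $f_\otimes f^*(\alpha_K)$ to agree with the $[L:K]$-fold product of $\alpha_K$ with itself. This is the step that genuinely exploits the normed (as opposed to merely $\scr E_\infty$) structure on $E$, and is the main—if modest—point to verify carefully.

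Granting the identity, Example~\ref{ex:locn-trivial} applies verbatim, so both $E[\alpha^{-1}]$ and $E_\alpha^\wedge$ are normed spectra. I would then invert the $b_j$ one at a time, and subsequently complete at the $a_i$ in turn. At each intermediate stage the resulting spectrum remains an $\MGL$-module (both localization and completion are functors on $\MGL\Mod$); the structure map $\MGL \to E$ extends canonically through these localizations and completions as a map of normed spectra; and the elements still to be processed continue to come from $\MGL_{2*,*}(k)$, so the key identity persists for them. Assembling these iterations gives $E[b_1^{-1},\dots,b_s^{-1}]_{a_1,\dots,a_r}^\wedge \in \NAlg(\MGL\Mod)$. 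The $\MGL_{(\ell)}$ variant follows by the identical argument, replacing $\MGL$ by $\MGL_{(\ell)}$ throughout.
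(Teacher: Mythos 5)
The reduction to treating one element at a time is essentially the paper's strategy, but your justification of the crucial identity $\N_{L/K}(\alpha_L) = \alpha_K^{[L:K]}$ is incorrect. It is not true that ``in a Tambara-type structure the norm of a pulled-back class is a power.'' For the Burnside Tambara functor of $C_2$ (that is, $\pi_0$ of the genuine equivariant sphere), a direct computation gives $\N^{C_2}_e(\mathrm{res}^{C_2}_e(2)) = 2 + [C_2] \ne 4$ in $A(C_2)$. More generally, the norm of a restricted class differs from the pure power by transfer terms; this is precisely what Corollary \ref{cor:norm-trick} of this paper quantifies. Your heuristic that symmetric monoidality of $f_\otimes$ ``forces'' the power conflates $f_\otimes f^*(X)$ with $X^{\otimes[L:K]}$; these are genuinely different, because of the twist built into the norm functor (in the equivariant analogy, $N^G_e\,\mathrm{res}^G_e(X)$ carries the twisted diagonal $G$-action, not the bare permutation action on $X^{\otimes |G|}$, and these are not equivalent as $G$-spectra).

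The identity does hold for the elements in question, but for a reason your argument omits: by Lemma \ref{lemm:MGL-vanishing}, the presheaf $\scr F = \MGL_{2*,*}(\ph)$ on finite étale $k$-schemes is a \emph{constant} sheaf, and in particular an étale sheaf; and \cite[Corollary C.13]{bachmann-norms} shows that such a multiplicative étale sheaf admits a \emph{unique} normed structure compatible with its multiplication. Since the power assignment $a \mapsto a^{[L:K]}$ is one such structure, it must coincide with the normed structure inherited from $\MGL \in \NAlg(\SH(k))$. This uniqueness argument, not formal symmetric monoidality, is what makes $\N_{L/K}(u) = u^{[L:K]}$ true for $u \in \MGL_{2*,*}(k)$, and it is the ingredient you need to supply. (Note that $\pi_{2*,*}$ of the motivic sphere is not a constant sheaf, which is consistent with the identity failing there.) Once this is in place, the rest of your iteration is fine and matches the paper's reduction.
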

\begin{proof}
Since tensoring with an invertible object preserves limits (and colimits), $b_i$-periodic spectra are stable under limits and colimits.
Since also $a_j$-complete spectra are stable under limits, it suffices to treat the periodization or completion at one element $u$.
By Example \ref{ex:locn-trivial}, it suffices to show that $\N_{L/K}(u) = u^{[L:K]}$.
Consider the presheaf $\scr F = \MGL_{2*,*}(\ph)$, say on $\FEt_K$, viewed as a presheaf of multiplicative monoids.
It follows from Lemma \ref{lemm:MGL-vanishing} that this is a constant sheaf, and hence in particular an étale sheaf.
We deduce from this and \cite[Corollary C.13]{bachmann-norms} that there is a unique normed structure on $\scr F$ compatible with the multiplication.
Since both the norms coming from $\MGL \in \NAlg(\SH(k))$ and the norms coming from the assignment $\N_{L/K}(a) = a^{[L:K]}$ are compatible with the multiplication in $\scr F$, they must agree.
The case of $\MGL_{(\ell)}$ is completely analogous.
This concludes the proof.\tombubble{I feel that it should be possible to say this better.}
\end{proof}

\subsection{\texorpdfstring{$\MGL_{(\ell)}$, $\BPGL$,}{BPGL} and generators}
\label{subsec:generators}

Recall that there is a retraction of homotopy commutative rings \cite[Definition 5.3]{vezzosi2001brown} \[\BPGL \to \MGL_{(\ell)} \to \BPGL. \]
We shall fix for all time an orientation of $\BPGL$, which therefore also gives an orientation of $\MGL_{(\ell)}$.
This orientation induces an isomorphism of rings \cite[Proposition 3.5]{vezzosi2001brown} \[ \BPGL^{2*,*}(\P^{\infty}) \cong \BPGL_{2*,*}\fpsr{x}, \] and the latter ring comes equipped with a formal group law.

\begin{proposition} \label{prop:BP-homotopy}
It is possible to choose elements $v_i \in \pi_{2\ell^i-2,\ell^i-1} \BPGL$ such that:
\begin{enumerate}
\item The map $$\Z_{(\ell)}[v_1,v_2,\cdots] \to \pi_{2*,*} \BPGL$$
is an isomorphism of rings.
\item We have the formula, in $\BPGL^{2*,*}(\P^{\infty})$,
$$[\ell](x) \equiv v_i x^{\ell^i} \text{ modulo }\ell,v_1,\cdots,v_{i-1},x^{\ell^i+1}.$$
\end{enumerate}
\end{proposition}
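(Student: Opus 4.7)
The plan is to reduce the statement to the classical analog for the topological Brown--Peterson spectrum $\BP$, using Lemma \ref{lemm:MGL-vanishing} (Spitzweck) as the bridge.

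First, I would invoke Lemma \ref{lemm:MGL-vanishing} together with its ring-level refinement in \cite{SpitzweckMGL} to identify $\pi_{2*,*}\MGL$ with the Lazard ring $L_*$, so that an element in Lazard degree $2n$ lies in motivic bidegree $(2n,n)$. Localizing at $\ell$ and applying the Vezzosi retraction $\BPGL \to \MGL_{(\ell)} \to \BPGL$ of homotopy commutative ring spectra then identifies $\pi_{2*,*}\BPGL$ with the topological $\BP_* = \Z_{(\ell)}[u_1,u_2,\ldots]$ as a graded ring, with $u_i$ in motivic bidegree $(2\ell^i-2,\ell^i-1)$.

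Second, I would check that the formal group law on $\pi_{2*,*}\BPGL$ induced by our fixed orientation corresponds, under the identification above, to the classical $\BP$ formal group law on $\BP_*[[x]]$. By construction of $\BPGL$ via the Quillen idempotent--splitting recipe (as in \cite{vezzosi2001brown}), this FGL is the $\ell$-typicalization of the universal FGL carried by $\MGL_{(\ell)}$; and Spitzweck's isomorphism of step one is precisely the one classifying the universal FGL on $\MGL$, so we recover the classical $\BP$ FGL.

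Third, with both the ring $\pi_{2*,*}\BPGL$ and its FGL identified with their $\BP$-theoretic counterparts, the proposition reduces to a classical theorem: in the universal $\ell$-typical FGL ring $(\BP_*, F)$, one can choose polynomial generators $v_i$ in degree $2(\ell^i-1)$ such that $[\ell]_F(x) \equiv v_i x^{\ell^i} \pmod{\ell, v_1, \ldots, v_{i-1}, x^{\ell^i+1}}$. This is the Araki/Hazewinkel construction; see \cite[Theorem A2.2.3 and Theorem 4.3.1]{ravenel1986complex}. Pulling such a choice back along the identification yields the desired motivic $v_i \in \pi_{2\ell^i-2,\ell^i-1}\BPGL$. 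The only substantive point in the argument is the compatibility claim in the second step; once it is in place, the remainder is a direct translation from classical $\BP$ theory.
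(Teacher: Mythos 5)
Your proposal is correct and takes essentially the same approach as the paper: invoke Spitzweck's computation (Lemma~\ref{lemm:MGL-vanishing}) to identify $\pi_{2*,*}\BPGL$ with $\BP_*$ via the map classifying the $\ell$-typical formal group law, and then reduce both claims to the classical Araki/Hazewinkel construction of generators. The paper's proof is just a terser version of the same reduction, leaving the FGL-compatibility in your second step implicit in the phrase ``by construction.''
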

Here and throughout, $[\ell](x)$ denotes the $\ell$-series of the formal group law \cite[Definition A2.1.19]{ravenel1986complex}.
\begin{proof}
The formal group law on $\pi_{2*,*}\MGL$ induces $\MU_{2*} \to \pi_{2*,*} \MGL$, which is an isomorphism by Lemma \ref{lemm:MGL-vanishing}.
By construction, the same holds for $\pi_{2*} \BP \to \pi_{2*,*} \BPGL$, so these claims reduce to their classical analogs, which are well-known \todo{reference?}.
\end{proof}

We denote by $t_{\ell^i-1}$ the image of $v_i$ in $\pi_{2\ell^i-2,\ell^i-1} MGL_{(\ell)}$.

\begin{proposition} \label{prop:MU-homotopy}
It is possible to choose, for all $i \ne \ell^n-1$, generators $t_i \in \pi_{2i,i} MGL_{(\ell)}$ such that:
\begin{enumerate}
\item $\pi_{2*,*}(MGL_{(\ell)}) \cong \Z_{(\ell)}[t_1,t_2,\cdots],$ and
\item $h(t_i) \equiv b_i$ modulo decomposables ($i \ne \ell^n-1$), where $h: \pi_{**} \MGL_{(\ell)} \to \H\Z_{**} \MGL_{(\ell)} \wequi \Z_{(\ell)}[b_1, b_2, \dots]$ is the Hurewicz map.
\end{enumerate}
\end{proposition}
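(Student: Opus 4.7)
The plan is to reduce to the classical analog, exactly in the spirit of Proposition~\ref{prop:BP-homotopy}. By Lemma~\ref{lemm:MGL-vanishing}, the universal formal group law induces a canonical ring isomorphism $\MU_{(\ell)*} \xrightarrow{\sim} \pi_{2*,*}\MGL_{(\ell)}$, under which $\MU_{(\ell)2i}$ corresponds to $\pi_{2i,i}\MGL_{(\ell)}$ and the multiplicative structures agree (both being constrained by the universal property of the Lazard ring). Likewise, the standard computation of the motivic cohomology of $\MGL$ identifies $\H\Z_{2*,*}\MGL_{(\ell)}$ with the classical $\H\Z_{(\ell)*}\MU_{(\ell)} \cong \Z_{(\ell)}[b_1,b_2,\dots]$, and under these identifications the motivic Hurewicz map corresponds to its topological counterpart.

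With these identifications in hand, the claim reduces to a standard refinement of Milnor's theorem on $\MU_*$: for each prime $\ell$, one can choose polynomial generators $t_i \in \MU_{(\ell)2i}$ of $\MU_{(\ell)*} = \Z_{(\ell)}[t_1,t_2,\dots]$ such that $t_{\ell^n-1}$ is the image of $v_n \in \BP_*$ under a fixed splitting $\BP \to \MU_{(\ell)}$, and $h(t_i) \equiv b_i$ modulo decomposables for all other $i$. The key observation behind this classical statement is that for $i = p^n - 1$ with $p$ a prime different from $\ell$, the integer coefficient $p$ appearing in Milnor's formula $h(x_i) \equiv p\, b_i$ modulo decomposables becomes invertible after localization at $\ell$, allowing one to rescale and absorb it. For all remaining indices $i$, the classical Milnor generators already satisfy $h(x_i) \equiv b_i$ modulo decomposables.

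Transferring these generators via the isomorphism of the first paragraph produces the required $t_i \in \pi_{2i,i}\MGL_{(\ell)}$. The only coherence one must verify is that the resulting $t_{\ell^n-1}$ agrees with the one already defined before the proposition (as the image of $v_n \in \BPGL$). This follows because the motivic retraction $\BPGL \to \MGL_{(\ell)} \to \BPGL$ realizes the classical retraction $\BP \to \MU_{(\ell)} \to \BP$ under the identification $\pi_{2*,*}\MGL_{(\ell)} \cong \MU_{(\ell)*}$: both motivic and classical splittings are determined by their underlying ring map, and on $\pi_{2*,*}$ the two maps coincide with the one induced by the shared formal group law from Proposition~\ref{prop:BP-homotopy}. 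I expect this coherence check to be the only nontrivial step, but it is essentially automatic given the Lazard-ring characterization.
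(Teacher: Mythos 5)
Your proof is correct and follows essentially the same strategy as the paper's: reduce via Lemma \ref{lemm:MGL-vanishing} to the classical localization of Milnor's theorem. The one place you diverge is the intermediate comparison: you compare the $\H\Z$-Hurewicz maps directly, which requires the (true but not entirely formal) assertion that the isomorphism $\H\Z_{2*,*}\MGL_{(\ell)} \cong \H\Z_*\MU_{(\ell)}$ intertwines the motivic and classical Hurewicz homomorphisms. The paper sidesteps this by noting it suffices to prove the \emph{stronger} statement with $\H\Z_{**}$ replaced by $\MGL_{**}$, where the comparison becomes a morphism of Hopf algebroids $(\MU_{2*},\MU_{2*}\MU) \to (\pi_{2*,*}\MGL_{(\ell)},\MGL_{2*,*}\MGL_{(\ell)})$ induced purely by the formal group law, preserving the $b_i$ by the structural description of both sides. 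This is a cleaner way to justify the Hurewicz comparison since it avoids a separate identification of $\H\Z$-homologies; if you prefer to work with $\H\Z$ directly you should cite or sketch why the Thom-class/formal-group-law definition of the $b_i$ makes the two Hurewicz maps agree. Your final coherence check for $t_{\ell^n-1}$ is the right observation and is consistent with how Proposition \ref{prop:BP-homotopy} is proved.
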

\begin{proof}
Note that for (2), it suffices to satisfy the stronger analogous condition for the $\MGL$-Hurewicz map $\pi_{**}\MGL_{(\ell)} \to \MGL_{**} \MGL_{(\ell)}$\tombubble{right?}.
The formal group law on $\pi_{2*,*} \MGL_{(\ell)}$ induces a morphism of Hopf algebroids \[ (\pi_{2*}\MU_{(\ell)}, \MU_{2*}\MU_{(\ell)}) \to (\pi_{2*,*} \MGL_{(\ell)}, \MGL_{2*,*}\MGL_{(\ell)} \] preserving the $b_i$ on both sides (see e.g. \cite[p. 23]{hoyois-algebraic-cobordism}).
This reduces the claims (in the strengthened form) to their classical analogs, which are well-known\todo{reference?}.
\end{proof}

Let $\scr C$ be a presentably symmetric monoidal $\infty$-category and $X \in \scr C$.
We can form the free $A_\infty$-ring on $X$ \cite[Proposition 4.1.1.14]{lurie-ha}, denoted by \[ \1_{\scr C}[X] \wequi \bigvee_{n \ge 0} X^{\wedge n}. \]
This ring is homotopy commutative if and only if the switch map on $X \wedge X$ is homotopic to the identity\tombubble{right?}.
This applies in particular if $\scr C = \MGL\Mod$ and $X = \MGL(i)[j]$, provided that $j$ is \emph{even}.
We may thus form the homotopy commutative ring and map of homotopy commutative rings \begin{equation} \label{eq:BPGL-MGL} \BPGL[T_i \mid i \ne \ell^n-1] := \BPGL \otimes_{\MGL} \bigotimes_i \MGL[T_i] \to \MGL_{(\ell)}, \end{equation} where $T_i$ is in degree $(2i,i)$ and is sent to $t_i \in \pi_{**} \MGL_{(\ell)}$.
\begin{lemma} \label{lemm:BPGL-MGL}
The map \eqref{eq:BPGL-MGL} is an equivalence.
\end{lemma}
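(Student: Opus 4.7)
The plan is to identify both sides of \eqref{eq:BPGL-MGL} as free $\BPGL$-modules on the same index set of multi-indices in the $T_i$'s, and verify that the map matches up bases.

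First, I decompose the source. By construction $\MGL[T_i] \simeq \bigvee_{n \geq 0} \Sigma^{2in, in}\MGL$ as an $\MGL$-module, so $\bigotimes_{i \ne \ell^n - 1} \MGL[T_i] \simeq \bigvee_\alpha \Sigma^{2|\alpha|, |\alpha|}\MGL$ indexed by multi-indices $\alpha = (\alpha_i)_{i \ne \ell^n - 1}$ with $|\alpha| = \sum_i i\alpha_i$. Base-changing along $\MGL \to \BPGL$ yields the source as $\bigvee_\alpha \Sigma^{2|\alpha|, |\alpha|}\BPGL$, with $\{T^\alpha\}$ a $\pi_{**}\BPGL$-basis of the bigraded homotopy.

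Second, I check that the induced map on $\pi_{2*,*}$ is an isomorphism. By the previous step, the source's $\pi_{2*,*}$ is $\pi_{2*,*}\BPGL[T_i \mid i \ne \ell^n - 1] = \Z_{(\ell)}[v_j, T_i]$, using Proposition \ref{prop:BP-homotopy}(1). By Proposition \ref{prop:MU-homotopy}(1), the target's $\pi_{2*,*}$ is $\Z_{(\ell)}[t_1, t_2, \ldots]$. Since $t_{\ell^j - 1}$ is defined as the image of $v_j$, this coincides with $\Z_{(\ell)}[v_j, t_i \mid i \ne \ell^n - 1]$; as \eqref{eq:BPGL-MGL} sends $T_i$ to $t_i$, it is an isomorphism on $\pi_{2*,*}$.

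Finally, I upgrade this $\pi_{2*,*}$-iso to an equivalence of $\BPGL$-modules. By the motivic Quillen splitting---obtained from the Vezzosi retraction $\BPGL \to \MGL_{(\ell)} \to \BPGL$ by an iterated idempotent argument using the formal-group-law data in Propositions \ref{prop:BP-homotopy} and \ref{prop:MU-homotopy}---the target also decomposes as a $\BPGL$-module as $\bigvee_\alpha \Sigma^{2|\alpha|, |\alpha|}\BPGL$ on the same index set. By Lemma \ref{lemm:MGL-vanishing}, $\BPGL$-module maps $\Sigma^{2m, m}\BPGL \to \Sigma^{2n, n}\BPGL$ vanish whenever $m < n$, so grouping summands by $|\alpha|$ makes the matrix of \eqref{eq:BPGL-MGL} block upper-triangular, with diagonal blocks consisting of matrices over $\pi_{0,0}\BPGL = \Z_{(\ell)}$; the previous step forces these diagonal blocks to be invertible, yielding the equivalence. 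The main technical obstacle is the motivic Quillen splitting itself: while expected by analogy with Quillen's classical result, some care is required in the motivic setting, though the decomposition is essentially forced by Proposition \ref{prop:BP-homotopy}(2).
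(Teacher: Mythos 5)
Your argument hinges on a ``motivic Quillen splitting'' of $\MGL_{(\ell)}$ as a wedge $\bigvee_\alpha \Sigma^{2|\alpha|,|\alpha|}\BPGL$, which you acknowledge is the ``main technical obstacle'' but then wave through as ``essentially forced.'' This is where the proof breaks down: such a splitting is essentially equivalent to Lemma~\ref{lemm:BPGL-MGL} itself (the lemma produces one), so assuming it is close to circular. The Vezzosi retraction only supplies a \emph{single} idempotent exhibiting $\BPGL$ as a retract of $\MGL_{(\ell)}$; it does not by itself produce the full family of orthogonal idempotents needed to realize the isomorphism $\pi_{2*,*}\MGL_{(\ell)} \cong \pi_{2*,*}\BPGL[t_i \mid i \ne \ell^j - 1]$ at the level of spectra or $\MGL$-modules. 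There is also a structural obstacle: $\BPGL$ is only a homotopy commutative ring spectrum, so ``$\BPGL$-module maps'' and decompositions ``as $\BPGL$-modules'' do not live in a well-behaved $\infty$-category; the lift of $\BPGL$ to an $\MGL$-module that would make this rigorous is exhibited only in Lemma~\ref{lemm:hopkins-morel-modified}(2), which in turn depends on the present lemma.

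By contrast, the paper sidesteps the splitting entirely. It observes that $\alpha$ is a map of \emph{very effective} homotopy $\MGL_{(\ell)}$-modules, and since $\ul{\pi}_0^{\eff}(\MGL_{(\ell)}) = \H\Z_{(\ell)}$, such a map is an equivalence iff $\alpha \wedge \H\Z_{(\ell)}$ is. After smashing with $\H\Z_{(\ell)}$, both sides are wedges of Tate twists $\H\Z_{(\ell)}(i)[2i]$, so the verification collapses to $\pi_{2*,*}$, where it reduces to the classical computation of $\pi_*(\MU \wedge \H\Z)$ versus $\pi_*(\BP \wedge \H\Z)$. Your $\pi_{2*,*}$-calculation and the upper-triangularity observation (using Lemma~\ref{lemm:MGL-vanishing} to kill maps that raise $|\alpha|$) are sound and would close the argument \emph{if} the splitting were available; but as things stand, the proof has a genuine gap in place of its crucial input. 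If you want to salvage this route, you would need to independently establish the motivic Quillen splitting (e.g.\ by a careful idempotent analysis in $\MGL_{(\ell)}^{**}\MGL_{(\ell)}$), at which point you should compare the effort against the slice-filtration reduction.
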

\begin{proof}
Let us denote the map by $\alpha$.
Since $\alpha$ is a map of very effective homotopy $\MGL_{(\ell)}$-modules and $\ul{\pi}_0^\eff(\MGL_{(\ell)}) = \H\Z_{(\ell)}$, it suffices to show that we $\alpha \wedge \H\Z_{(\ell)}$ is an equivalence.
Since $\MGL \wedge \H\Z = \H\Z[b_1, b_2 \dots]$ is a sum of of objects of the form $\H\Z(i)[2i]$ for $i \in \Z$, it suffices to show that $\pi_{2i,i}(\alpha \wedge \H\Z_{(\ell)})$ is an isomorphism for each $i \in \Z$.
Since $\pi_{2*,*}(\MGL \wedge \H\Z) \wequi \pi_{2*}(\MU \wedge \H\Z)$ and similarly for $\BPGL$, our claim reduces to the classical analog\todo{really?}, which is well-known\todo{reference?}.
\end{proof}

\begin{lemma} \label{lemm:hopkins-morel-modified}
We have
\begin{enumerate}
\item $\MGL_{(\ell)}/(t_1, t_2, \dots) \wequi \H\Z_{(\ell)}$,
\item $\MGL_{(\ell)}/(t_i \mid i \ne \ell^n-1) \wequi \BPGL$,
\item $\BPGL/(v_1, v_2, \dots) \wequi \H\Z_{(\ell)}$.
\item $\MGL_{(\ell)}/(v_1, v_2, \dots) \wequi \H\Z_{(\ell)}[T_i \mid i \ne \ell^n-1]$.
\end{enumerate}
\end{lemma}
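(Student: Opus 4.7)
The four parts are tightly linked, and the strategy is to bootstrap them all from Lemma \ref{lemm:BPGL-MGL} together with the Hopkins--Morel theorem in the form due to Hoyois. Specifically, the plan is to prove (2) directly from the splitting of Lemma \ref{lemm:BPGL-MGL}, invoke Hopkins--Morel--Hoyois for part (1), and then deduce (3) and (4) as formal consequences.

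For (2), Lemma \ref{lemm:BPGL-MGL} identifies $\MGL_{(\ell)}$ with the free homotopy commutative $\BPGL$-algebra $\BPGL[T_i \mid i \neq \ell^n-1]$, with $T_i$ mapping to $t_i$. Quotienting by the $t_i$ with $i \neq \ell^n-1$ then kills exactly the free polynomial generators, leaving $\BPGL$. For (1), we appeal to the Hopkins--Morel theorem, extended by Hoyois to arbitrary base schemes after inverting the exponential characteristic: if $(x_i)_{i \ge 1}$ is a system of polynomial generators of $\pi_{2*,*}\MGL$ satisfying $h(x_i) \equiv b_i$ modulo decomposables, then $\MGL/(x_1,x_2,\dots) \wequi \H\Z$. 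Our generators (those of Proposition \ref{prop:MU-homotopy} together with the $t_{\ell^n-1} = v_n$ from Proposition \ref{prop:BP-homotopy}) satisfy this normalization, so after $(\ell)$-localization the theorem gives (1). If Hoyois's statement as cited uses a specific choice of generators, the passage to our $t_i$'s is handled by the standard observation that two systems of polynomial generators differing by a unit-plus-decomposables change of variables produce equivalent successive Koszul quotients: inductively, once the decomposables have been killed, one is left to mod out by a unit multiple of a single generator, which is the same as modding out by the generator itself.

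For (3), we factor the quotient in (1) through (2): since $v_n$ is the image of $t_{\ell^n-1}$ in $\pi_{**}\MGL_{(\ell)}$,
\[ \MGL_{(\ell)}/(t_1, t_2, \dots) \wequi \bigl(\MGL_{(\ell)}/(t_i \mid i \neq \ell^n - 1)\bigr)/(v_1, v_2, \dots) \wequi \BPGL/(v_1, v_2, \dots), \]
and the left-hand side is $\H\Z_{(\ell)}$ by (1). For (4), Lemma \ref{lemm:BPGL-MGL} yields
\[ \MGL_{(\ell)}/(v_1, v_2, \dots) \wequi \BPGL[T_i \mid i \neq \ell^n-1]/(v_1,v_2,\dots) \wequi \bigl(\BPGL/(v_1,v_2,\dots)\bigr)[T_i \mid i \neq \ell^n-1], \]
since each $v_j$ lies in $\pi_{**}\BPGL$ and the $T_i$ are free polynomial variables; by (3) this is $\H\Z_{(\ell)}[T_i \mid i \neq \ell^n-1]$.

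The only non-formal input is part (1), where the entire content of the classical Hopkins--Morel--Hoyois theorem is being used. Once (1) is granted, parts (2), (3), (4) follow purely from the algebraic splitting of Lemma \ref{lemm:BPGL-MGL} and the compatibility of quotients with free polynomial extensions; the main obstacle is therefore ensuring that the cited form of Hopkins--Morel applies to our specific choice of generators, which is addressed by the change-of-generators remark above.
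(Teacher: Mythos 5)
Your proof is correct and follows essentially the same route as the paper: (1) is deduced from Hoyois's version of the Hopkins--Morel theorem, (2) uses Lemma \ref{lemm:BPGL-MGL}, (3) follows from (1) and (2) by permuting the order in which the generators are killed, and (4) follows from (3) together with Lemma \ref{lemm:BPGL-MGL}. The one place you diverge is part (2): the paper verifies directly that the composite $\BPGL \to \MGL_{(\ell)} \to \MGL_{(\ell)}/(t_i \mid i \neq \ell^n - 1)$ is an equivalence by reducing to a check on $\H\Z$-homology (as in the proof of Lemma \ref{lemm:BPGL-MGL}), whereas you argue formally that under the identification $\MGL_{(\ell)} \wequi \BPGL \otimes_{\MGL} \bigotimes_i \MGL[T_i]$ the $t_i$ correspond to the free polynomial variables $T_i$, and $\MGL[T_i]/T_i \wequi \MGL$. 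Both work; your version is somewhat cleaner since it pushes all the $\H\Z$-homology bookkeeping into Lemma \ref{lemm:BPGL-MGL}, whereas the paper redoes it. Your change-of-generators remark for (1) plays the role that the paper's invocation of ``adequacy'' (in the sense of Hoyois's Definition 7.1) plays; it is worth noting that Proposition \ref{prop:MU-homotopy} only pins down the Hurewicz image of $t_i$ for $i \neq \ell^m - 1$, so the adequacy of the $v_m = t_{\ell^m - 1}$ is an extra classical input, which both you and the paper treat as known.
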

\begin{remark}
Note that a priori we only obtain $\BPGL$ as an object of $\SH(k)$, not $\MGL\Mod$.
In particular (3) does not really make sense.
However (2) exhibits a lift of $\BPGL$ to $\MGL\Mod$, allowing us to make sense of the iterated (infinite) quotient.
\end{remark}
\begin{proof}
(1) This is a minor adaptation of the main theorem of \cite{hoyois-algebraic-cobordism}.
It suffices to show that we have an equivalence after $\wedge \H\Q$ and $\wedge \H\Z/\ell$. The arguments of \cite[Lemma 7.8 and Lemma 7.9]{hoyois-algebraic-cobordism} go through essentially unchanged (the key fact we use here is that our generators $t_{\ell^n-1}$ are \emph{adequate} in the sense of \cite[Definition 7.1]{hoyois-algebraic-cobordism}).\tombubble{Is there a slicker way of seeing this?}

(2) We claim that the composite $\beta: \BPGL \to \MGL_{(\ell)} \to \MGL_{(\ell)}/(t_i \mid i \ne \ell^n-1)$ is an equivalence.
As in the proof of Lemma \ref{lemm:BPGL-MGL}, it suffices to prove that $\beta \wedge \H\Z_{(\ell)}$ is an equivalence.
By construction, $\H\Z_{**} \MGL_{(\ell)}/(t_i \mid i \ne \ell^n-1)$ is a polynomial ring on generators in the same bidegrees as the $v_i$\tombubble{say something clearer here?}.
This implies that $\H\Z \wedge \MGL_{(\ell)}/(t_i \mid i \ne \ell^n-1)$ is a sum of objects of the form $\H\Z_{(\ell)}(i)[2i]$ for $i \in \Z$.
It hence suffices to check that $\H\Z_{2*,*} \beta$ is an equivalence, whence the claim reduces to the classical analog, which is well-known\todo{reference?}.

(3) Follows from (1) and (2) since the formation of the quotients is independent of the order of elements being killed.

(4) Follows from (3) and Lemma \ref{lemm:BPGL-MGL}.
\end{proof}

\subsection{Ring structures on quotients of $\MGL$}
\begin{definition}
We call a field $k$ \emph{$\ell$-good} if $\cd_\ell(k) = 0$ and $k$ contains all $\ell^n$-th roots of unity for all $n$.
\end{definition}
\begin{lemma} \label{lemm:MGL-homotopy}
Suppose that $k$ is $\ell$-good.
Then \[ \pi_{**} \MGL_{\ell}^\wedge \wequi L_{\ell}^\wedge[\tau] \text{ and } \pi_{**} \MGL/\ell^n \wequi L/\ell^n[\tau], \] where $|\tau|=(0,-1)$.
\end{lemma}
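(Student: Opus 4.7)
The plan is to establish the second isomorphism $\pi_{**}\MGL/\ell^n \wequi L/\ell^n[\tau]$ first and obtain the completed statement by taking $\lim_n$: the transition maps $\MGL/\ell^{n+1} \to \MGL/\ell^n$ will induce surjections on all homotopy groups (since the source already surjects onto the $E_\infty$-page), so the Milnor $\lim^1$ vanishes and $\pi_{**}\MGL_\ell^\wedge \wequi L_\ell^\wedge[\tau]$. The main tool for the finite statement is the slice spectral sequence for $\MGL/\ell^n$.

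From the Hopkins--Morel--Hoyois theorem (reflected here in Lemma~\ref{lemm:hopkins-morel-modified}(1)), one knows that $s_q \MGL \wequi \Sigma^{2q,q}\H\Z \otimes L_{2q}$, and since smashing with $S/\ell^n$ is exact and commutes with slices, $s_q(\MGL/\ell^n) \wequi \Sigma^{2q,q}\H\Z/\ell^n \otimes L_{2q}$. Under the $\ell$-good hypothesis on $k$, the Beilinson--Lichtenbaum theorem (Voevodsky--Rost), combined with $\cd_\ell(k)=0$ and the triviality of the Galois action on $\mu_{\ell^n}$, yields $\pi_{**}\H\Z/\ell^n \wequi \Z/\ell^n[\tau]$ with $\tau$ in bidegree $(0,-1)$. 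Consequently, the $E_1$-page of the slice spectral sequence for $\MGL/\ell^n$ is concentrated in bidegrees of the form $(p,w)=(2q,w)$ with $w \le q$, contributing a single copy $L_{2q}/\ell^n \cdot \tau^{q-w}$ from the slice of index $q$.

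Since each nonzero $E_1$ class has even simplicial degree $p=2q$, while a slice differential $d_r$ decreases $p$ by $1$ and lands in the slice of index $q+r$ (whose nonzero region again requires even $p$), a parity count gives $d_r=0$ identically. The spectral sequence collapses at $E_1$, and there are no extension problems because at most one slice contributes to any given bidegree. Convergence of the slice spectral sequence is guaranteed since $\MGL/\ell^n$ is very effective (being a cofiber of very effective spectra), so Levine's convergence results apply. This identifies $\pi_{**}\MGL/\ell^n$ with $L/\ell^n[\tau]$, and passage to the limit as above gives the completed statement.

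The step I expect to be most delicate is verifying cleanly that the Hopkins--Morel--Hoyois identification of the slices of $\MGL$ passes to $\MGL/\ell^n$ in the form asserted and that the resulting slice spectral sequence really does converge over an arbitrary $\ell$-good $k$ (rather than only a perfect one); one should cite the relevant result of Levine and note that the $\ell$-good hypothesis implies $k$ is perfect since $e \ne \ell$ and $k$ contains all $\ell^n$-th roots of unity forces the exponential characteristic assumptions to harmonize with the standing convention of $e$-periodization. The remaining bookkeeping — verifying the precise bidegrees and the multiplicative structure on $E_\infty$ — is routine, since at each nonzero bidegree only one slice contributes a generator.
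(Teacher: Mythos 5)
Your approach is essentially identical to the paper's: compute $\pi_{**}\MGL/\ell^n$ via the slice spectral sequence, observe that the $E_1$-page is concentrated in even simplicial degree (forcing collapse by parity and ruling out extensions since each bidegree meets only one slice), and pass to the limit to obtain the $\ell$-completed statement. Two small remarks on citations. First, the paper attributes the identification $s_q(\MGL) \wequi \Sigma^{2q,q}\H\Z \otimes L_{2q}$ directly to Spitzweck, not to Lemma~\ref{lemm:hopkins-morel-modified}(1); that lemma states the Hopkins--Morel equivalence $\MGL_{(\ell)}/(t_1,t_2,\dots) \wequi \H\Z_{(\ell)}$, from which the slice computation follows but with a nontrivial extra step (identifying the slice filtration with the $t$-adic filtration). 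Second, your handling of convergence is shaky: the claim that $\ell$-good forces $k$ perfect is not correct ($\cd_\ell(k)=0$ and $\mu_{\ell^\infty}\subset k$ say nothing about perfectness), and the verbal attempt to connect this to $e$-periodization does not constitute an argument. The paper sidesteps Levine's perfect-field hypothesis entirely by citing \cite[Theorem 8.12]{hoyois-algebraic-cobordism}, which gives strong convergence of the slice spectral sequence after inverting $e$ — exactly the standing convention in this section.
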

\begin{proof}
The first statement follows from the second (for all $n$).
Our assumptions imply that $H^{**}(k, \Z/\ell^n) = \Z/\ell^n[\tau]$; this follows from the Bloch-Kato conjecture \cite[Theorem 6.17]{voevodsky2011motivic}\tombubble{And various easier results...}.
Noting that $s_q(\MGL) = \Sigma^{2q,q} \H\Z \otimes L_{2q}$ \cite[Corollary 4.7]{spitzweck2010relations} (see also \cite[Theorem 3.1]{SpitzweckMGL}), the result follows from the strongly convergent slice spectral sequence for $\MGL/\ell^n$ \cite[Theorem 8.12]{hoyois-algebraic-cobordism}, which degenerates at the $E_1$-page without extension problems, since everything is concentrated in even degrees and every degree only has elements in a unique filtration.
\end{proof}

\begin{definition} \label{def:homotopy-left-unital}
Let $\scr C$ be a symmetric monoidal category.
We say that $E \in \scr C_{\1/}$ \emph{admits a homotopy left unital multiplication} if there exists $m: E \otimes E \to E$ such that the following diagram commutes
\begin{equation*}
\begin{CD}
\1 \otimes E @>>> E \otimes E \\
   @|                      @V{m}VV        \\
E            @=    E.
\end{CD}
\end{equation*}
\end{definition}

In the lemma below, we work with $\scr C$ the category of $MGL$-modules:

\begin{lemma} \label{lemm:final-2}
Let $x_1, \dots, x_n \in \pi_{2*,*} \MGL_{\ell}^\wedge$.
\begin{enumerate}
\item Suppose that $k$ is $\ell$-good.
  Then $\MGL_{\ell}^\wedge/(x_1, \dots, x_n)$ admits a homotopy left unital $\MGL$-algebra structure.
\item Suppose that $x_1 = \ell^N$.
  Then there exists a finite separable extension $k'/k$ such that $\MGL_{\ell}^\wedge/(x_1, \dots, x_n)|_{k'}$ admits a homotopy left unital $\MGL$-algebra structure.
\end{enumerate}
\end{lemma}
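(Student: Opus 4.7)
The plan is: for (1), equip each $\MGL_\ell^\wedge/x_i$ with a homotopy commutative $\MGL_\ell^\wedge$-algebra structure via an obstruction argument enabled by the even-simplicial-degree concentration of $\pi_{**}\MGL_\ell^\wedge$ from Lemma~\ref{lemm:MGL-homotopy}, then iterate via the tensor product; for (2), apply (1) over a separable closure of $k$ (which is automatically $\ell$-good since $\ell \ne e$) and descend to a finite separable extension using compactness of $M := \MGL_\ell^\wedge/(\ell^N, x_2,\dots,x_n)$, which is afforded by the hypothesis $x_1 = \ell^N$. The main obstacle will be the motivic adaptation of Strickland's obstruction theory in (1); once (1) is in hand, (2) is essentially formal.

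For (1), Corollary~\ref{cor:loc-MGL} gives that $\MGL_\ell^\wedge$ is a (normed, hence) commutative $\MGL$-algebra, and Lemma~\ref{lemm:MGL-homotopy} gives $\pi_{**}\MGL_\ell^\wedge = L_\ell^\wedge[\tau]$, concentrated in even simplicial degrees. I would first argue, via a motivic analog of Strickland's construction \cite{strickland1999products}, that for any $x \in \pi_{2a,b}\MGL_\ell^\wedge$ the $\MGL_\ell^\wedge$-module $\MGL_\ell^\wedge/x$ admits a homotopy commutative $\MGL_\ell^\wedge$-algebra structure: the swap on $\Sigma^{2a,b}\MGL_\ell^\wedge \wedge \Sigma^{2a,b}\MGL_\ell^\wedge$ has sign $(-1)^{2a}=+1$ since $\MGL_\ell^\wedge$ is oriented and $2a$ is even, and the obstructions to constructing the multiplication (and verifying commutativity) live in odd-simplicial-degree homotopy groups of modules derived from $\MGL_\ell^\wedge/x$, which vanish by the even-concentration. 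Then I would iterate via the tensor product decomposition
\[ \MGL_\ell^\wedge/(x_1,\dots,x_n) \simeq \MGL_\ell^\wedge/x_1 \wedge_{\MGL_\ell^\wedge} \cdots \wedge_{\MGL_\ell^\wedge} \MGL_\ell^\wedge/x_n, \]
exhibiting the quotient as a tensor product of homotopy commutative $\MGL_\ell^\wedge$-algebras, which inherits such a structure and is in particular a homotopy left unital $\MGL$-algebra.

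For (2), let $k^s$ denote a separable closure of $k$. Since $\ell \ne e$, $\mathrm{cd}_\ell(k^s)=0$, and $k^s$ contains all $\ell^n$-th roots of unity, so $k^s$ is $\ell$-good. Applying (1) over $k^s$ furnishes $M|_{k^s}$ with a homotopy left unital $\MGL$-algebra structure. Using that $\MGL/\ell^N \simeq \MGL_\ell^\wedge/\ell^N$ (the former being $\ell^N$-torsion, hence $\ell$-complete), $M$ is built by iterated cofibers starting from the compact unit $\MGL$ of $\MGL\Mod_k$, and is therefore itself a compact $\MGL$-module. The existence of a homotopy left unital $\MGL$-algebra multiplication on $M$ is equivalent to $\id_M$ lying in the image of the restriction map $[M \wedge_\MGL M, M]_{\MGL\Mod} \to [M,M]_{\MGL\Mod}$. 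Since $M$ is compact and $k^s = \colim_{k'/k \text{ finite separable}} k'$, these Hom groups commute with the colimit of base changes; hence the fact that $\id_{M|_{k^s}}$ lies in the image over $k^s$ implies that the corresponding lift already exists after base change to some finite separable extension $k'/k$, yielding the desired structure.
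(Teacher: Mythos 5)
Your proposal is correct and follows essentially the same two-step strategy as the paper: for (1), a Strickland-type even-concentration argument over an $\ell$-good base applied to a single quotient $\MGL_\ell^\wedge/x$, then iterated through the $\MGL_\ell^\wedge$-linear tensor product; for (2), apply (1) over a separable closure and descend to a finite subextension via compactness of $M$ and continuity, observing along the way that $\MGL_\ell^\wedge/\ell^N \simeq \MGL_{(\ell)}/\ell^N$ so that the hypothesis $x_1 = \ell^N$ buys compactness. One caveat on (1): you aim for homotopy \emph{commutativity} of the multiplication on $\MGL_\ell^\wedge/x$, which is stronger than what the lemma requires and is not fully justified by the vanishing you invoke — the commutativity obstruction lies in $[\Sigma^{|x|+1}\MGL_\ell^\wedge/x,\ \MGL_\ell^\wedge/x]$, and the odd-degree homotopy of $\ul{\Hom}_{\MGL_\ell^\wedge}(\MGL_\ell^\wedge/x, \MGL_\ell^\wedge/x)$ does not vanish merely by even concentration of $\pi_{**}$. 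Fortunately only left (and right) unitality is needed, which the injection $[\MGL_\ell^\wedge/x, \MGL_\ell^\wedge/x]_{2*,*} \hookrightarrow [\MGL_\ell^\wedge, \MGL_\ell^\wedge/x]_{2*,*}$ does give you, and the tensor product of homotopy left unital $\MGL_\ell^\wedge$-algebras is again homotopy left unital using only the symmetry of $\MGL_\ell^\wedge\Mod$ — no commutativity of the factors is required. So your argument goes through once the commutativity ambitions are dropped, and this is also all the paper establishes for (1).
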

\begin{proof}
(1) It suffices to deal with $n=1$.
Note that by Lemma \ref{lemm:MGL-homotopy}, $x=x_1$ is not a zero-divisor in $\pi_{**} \MGL_{\ell}^\wedge$; in particular both $\MGL_\ell^\wedge$ and $\MGL_\ell^\wedge/x$ have homotopy groups concentrated in even (simplicial) degrees.
We can now argue exactly as in \cite[Lemma 3.2, Corollary 3.3, Lemma 3.4]{strickland1999products}: the evenness of $\MGL_\ell^\wedge$ implies that $[\MGL_\ell^\wedge/x, \MGL_\ell^\wedge/x]_{2*,*} \xrightarrow{\alpha} [\MGL_\ell^\wedge, \MGL_\ell^\wedge/x]_{2*,*}$ is an injection, and in particular $x$ acts by zero on the left hand side (since it does so on the right hand side).
It follows that $\MGL_\ell^\wedge/x \wedge \MGL_\ell^\wedge/x \wequi \MGL_\ell^\wedge/x \vee \Sigma \MGL_\ell^\wedge/x$ (non-canonically), and we can take a projection to the first summand as the multiplication map.
Injectivity of $\alpha$ implies that any such multiplication is unital (even on both sides), as desired.

(2) Let $R = \MGL_{\ell}^\wedge/(x_1, \dots, x_n)$.
We need to find $m: R \otimes R \to R$.
Let $k'/k$ be an algebraic separable $\ell$-good extension (e.g. a separable closure of $k$\tombubble{is this ok?}).
Then $m': R \otimes R|_{k'} \to R|_{k'}$ exists, by our assumptions and (1).
Since $R \otimes R$ is a compact $\MGL_{(\ell)}$-module, by continuity \cite[Lemma A.7(1)]{hoyois-algebraic-cobordism} there exists a finite subextension $k'/l/k$ and $m: R \otimes R|_l \to R|_l$ such that $m|_{k'} \wequi m'$.
Replacing $k$ by $l$ if necessary, we may thus assume that $m$ exists.
We need $m$ to satisfy certain properties.
They are expressed as commutativity of certain further diagrams of compact $\MGL_{(\ell)}$-modules, and hold for $m|_{k'}$ by construction.
Hence using continuity again, they hold after passing to a possible further finite subextension.
This concludes the proof.
\end{proof}

\begin{example}
The finite extension cannot be avoided in general.  For example, if $k=\R$, then not every quotient $\MGL_{\ell}^{\wedge}/t_i$ can admit a homotopy left unital $\MGL$-algebra structure.  If they all did, then their $C_2$-equivariant realizations would all be homotopy $\MU_{\mathbb{R}}$-algebras, and it would follow that the Real Morava $K$-theories were $C_2$-equivariant homotopy $\MU_{\R}$-algebras.  That this cannot be was pointed out by Hu-Kriz \cite[p. 332]{hukriz2001real}.

In general, as the second author first learned from Xiaolin Danny Shi, it is impossible for $\BP_{\R}/v_i$ to be a homotopy ring when $i>0$.  If this were possible, then the Postnikov truncation map $\BP_{\mathbb{R}}/v_i \to \H\underline{\mathbb{Z}_{(2)}} \to \H\underline{\mathbb{F}_2}$ would induce a map of ordinary rings
$$\pi_*(\Phi(\BP_{\mathbb{R}}/v_i)) \to \pi_*(\Phi(\H\underline{\mathbb{F}_2})).$$
Here, $\Phi$ is the $C_2$ geometric fixed points functor.  The codomain of this map is a polynomial ring $\mathbb{F}_2[a]$, but the image consists of just the unit and $a^{2^i}$.  In particular, the image is not a subring of the codomain.
\end{example}

\section{The transfer over an adequate field} \label{sec:cohom-homology}

Consider the spectrum $\MGL_{(\ell)} \in \SH(k)$.  By \cite[Proposition 12.8]{bachmann-norms}, this is a normed spectrum, and by Lemma \ref{lemm:MGL-vanishing} we have $\pi_{2*,*} \MGL_{(\ell)} \cong L_* \otimes \Z_{(\ell)}$.  In Proposition \ref{prop:MU-homotopy} we chose generators of this latter ring such that
$$\pi_{2*,*} \MGL_{(\ell)} \cong \Z_{(\ell)}[t_1,t_2,\cdots].$$
The map $\pi_{2*,*}\BPGL \to \pi_{2*,*}\MGL_{(\ell)}$ sends $v_i$ to $t_{p^i-1}$.  When referring to elements of $\pi_{2*,*} \MGL_{(\ell)}$, we will use $v_i$ and $t_{p^i-1}$ interchangeably.

\begin{definition} \label{def:adequate}
Fix for the remainder of this section an integer $n>0$.  We call a field $k$ \emph{adequate} if in $\SH(k)$, the $\MGL$-module $$\MGL/(\ell,v_1,\cdots,v_{n-1})$$ admits the structure of a homotopy left unital $\MGL$-algebra.
\end{definition}

Note that, by Lemma \ref{lemm:final-2}, every field $k$ admits a finite separable extension $k'$ that is adequate.
Moreover $\ell$-good fields (such as separably closed fields) are adequate, and so are extensions of adequate fields.

\begin{convention}
We will assume for the remainder of this section that the field $k$ is adequate.
\end{convention}

\begin{definition}
We define the normed spectrum $R$ (depending on the implicit integer $n$) to be
$$R=\MGL_{(\ell)}[v_n^{-1}]^{\wedge}_{(\ell,v_1,v_2,\cdots,v_{n-1})}.$$
Note that $R$ is a normed spectrum by Corollary \ref{cor:loc-MGL}.
\end{definition}

Our goal will be to understand the completed $R$-module
$$(R \wedge \B\mu_{\ell})^{\wedge}_{(\ell,v_1,v_2,\cdots,v_{n-1})}.$$
We will see that it is a free, finitely-generated $R$-module, and we will name a basis for it in terms of elements in
$$R^{2*,*}(\B \mu_{\ell}).$$
If the field $k$ has a primitive $\ell$th root of unity, there is an equivalence of classifying spaces $\B \mu_{\ell} \simeq \B C_\ell$.
We will end by giving (under this assumption) a formula for the stable transfer
$$R^{2*,*} \to R^{2*,*}(\B C_{\ell}) \cong R^{2*,*}(\B \mu_{\ell}).$$

\subsection{The homotopy of \texorpdfstring{$R$}{R}}

\begin{definition} \label{def:adequate}
For a tuple of non-negative integers $K = (k_0, k_1, \dots, k_{n-1})$, we define \[ M_K = R/(\ell^{k_0}, v_1^{k_1}, \dots, v_{n-1}^{k_{n-1}}). \]
We will use $M$ to denote
$$M=M_{(1,1,\cdots,1)} \simeq R/(\ell,v_1,\dots,v_{n-1}) \simeq \MGL[v_{n}^{-1}]/(\ell,v_1,\dots,v_{n-1}).$$
\end{definition}
Since $$M \simeq \MGL/(\ell,v_1,\dots,v_{n-1}) \otimes_{\MGL} R,$$ the assumption that $k$ is an adequate field ensures that $M$ is a homotopy left unital $R$-algebra.

\begin{lemma} \label{lemm:vanising-start}
Suppose $K$ is a tuple of non-negative integers.  Then
$$\pi_{2*,*}M_{K} \cong (\pi_{2*,*} \MGL)[v_n^{-1}]/(\ell^{k_0},v_1^{k_1},\dots,v_{n-1}^{k_{n-1}}),$$
and $\pi_{p,q} M_K = 0$ for $p < 2q$.  Furthermore, if $K'$ is another tuple of integers such that $k'_i \ge k_i$ for all $0 \le i \le n-1$, then
the map $\pi_{2*+1,*} M_{K'} \to \pi_{2*+1,*} M_K$ is surjective.
\end{lemma}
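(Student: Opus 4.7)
The plan is to work with $E := \MGL_{(\ell)}[v_n^{-1}]$ in place of $R$, building $M_K$ as an iterated cofiber and propagating the homotopy information via long exact sequences. First I would show that the cofiber $E/(\ell^{k_0}, v_1^{k_1}, \ldots, v_{n-1}^{k_{n-1}})$ is annihilated by some power $I^N$ of the completion ideal $I = (\ell, v_1, \ldots, v_{n-1})$ — a pigeonhole argument shows any monomial of total degree $\geq \sum k_i$ in $\ell, v_1, \ldots, v_{n-1}$ lies in the ideal generated by $\ell^{k_0}, v_1^{k_1}, \ldots, v_{n-1}^{k_{n-1}}$; such an $I$-power-torsion $E$-module is automatically $I$-complete, so the natural comparison map $E/(\ldots) \to R/(\ldots) = M_K$ is an equivalence.

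Next I would establish the claims for $E$ itself. Since $v_n$ has bidegree $(2(\ell^n-1), \ell^n-1)$ on the diagonal $p = 2q$, the transition maps in the filtered colimit $E = \colim_k \Sigma^{-k|v_n|}\MGL_{(\ell)}$ preserve the vanishing condition $p < 2q$ given by Lemma \ref{lemm:MGL-vanishing}, so $\pi_{p,q}E = 0$ for $p < 2q$, and Proposition \ref{prop:MU-homotopy} identifies $\pi_{2*,*}E$ with the polynomial ring $\Z_{(\ell)}[t_1, t_2, \ldots][v_n^{-1}]$.

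I would then iteratively build $M_K$: writing $v_0 := \ell$, set $E_0 := E$ and $E_{j+1} := E_j/v_j^{k_j}$, so that $M_K \simeq E_n$. By induction on $j$ I maintain the invariants (a) $\pi_{p,q}E_j = 0$ for $p < 2q$; (b) $\pi_{2*,*}E_j = \pi_{2*,*}E/(\ell^{k_0}, \ldots, v_{j-1}^{k_{j-1}})$, a polynomial-ring quotient in which $v_j$ remains a polynomial generator; and (c) consequently $v_j^{k_j}$ acts as a non-zero-divisor on $\pi_{2*,*}E_j$. Each $v_j^{k_j}$ has bidegree $(2b, b)$ with $b = k_j(\ell^j-1)$ on the diagonal, so the boundary term $\pi_{p-2b-1, q-b}(E_j)$ appearing in the LES of $\Sigma^{|v_j^{k_j}|}E_j \xrightarrow{v_j^{k_j}} E_j \to E_{j+1}$ lies strictly below the vanishing line whenever $p \leq 2q$, hence vanishes by (a). This together with (c) propagates (a) and (b); (c) for $v_{j+1}$ is then immediate from the polynomial structure.

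For the surjectivity statement, I apply the same LES with $p = 2q+1$: the boundary becomes $\pi_{2(q-b), q-b}(E_j)$, an \emph{even}-degree group, on which $v_j^{k_j}$ acts injectively by (c); its kernel is zero, so $\pi_{2q+1,q}(E_{j+1}) = \pi_{2q+1,q}(E_j)/v_j^{k_j}$ as well. Iterating yields $\pi_{2q+1, q}(M_K) = \pi_{2q+1, q}(E)/(\ell^{k_0}, v_1^{k_1}, \ldots, v_{n-1}^{k_{n-1}})\cdot \pi_{2q+1, q}(E)$ and similarly for $M_{K'}$; since $K' \geq K$ implies $I_{K'} \subseteq I_K$, the map $\pi_{2q+1, q}(M_{K'}) \to \pi_{2q+1, q}(M_K)$ is the natural quotient and hence surjective. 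The main bookkeeping subtleties will be the initial $I$-completeness reduction and the consistent tracking of the polynomial-ring structure through the iterated quotients; once in place, everything else is controlled by the placement of each $|v_j|$ on the diagonal $p = 2q$.
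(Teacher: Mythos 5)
Your proof is correct and follows essentially the same route as the paper: an iterated long exact sequence argument for the cofiber sequences along $\ell^{k_0}, v_1^{k_1}, \dots, v_{n-1}^{k_{n-1}}$, using the vanishing line $\pi_{p,q}=0$ for $p<2q$ together with the polynomial structure of $\pi_{2*,*}$ to keep the boundary maps under control. The one point where you go beyond the paper's exposition is the explicit $I$-power-torsion argument identifying $M_K \simeq \MGL_{(\ell)}[v_n^{-1}]/(\ell^{k_0},\dots,v_{n-1}^{k_{n-1}})$; the paper silently passes between the completed and uncompleted versions, so this is a genuine (and welcome) gap-filling step rather than a deviation.
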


\begin{proof}
For $0 \le i \le n-1$, let $A_i$ denote $\MGL[v_n^{-1}]/(\ell^{k_0},\dots,v_i^{k_{i}})$.  Then $A_0=\MGL[v_n^{-1}]$ and $A_{n-1}=M_K$.
We understand all the relevant homotopy groups of $A_0$ by Lemma \ref{lemm:MGL-vanishing}, and we may inductively understand the homotopy groups of $A_{i+1}$
via the long exact sequence associated to the cofiber sequence
$$\Sigma^{k_{i+1}|v_{i+1}|} A_{i} \to A_i \to A_{i+1}.$$
To be explicit, we have the long exact sequence
$$\pi_{(p,q)+(\ell^{i+1}-1)(2,1)} A_i \xrightarrow{v_i^{k_i}} \pi_{p,q} A_i \to \pi_{p,q} A_{i+1} \to \pi_{(p-1,q)+(\ell^{i+1}-1)(2,1)} A_i \xrightarrow{v_i^{k_i}} \pi_{p-1,q} A_i.$$
The lemma follows from the inductive calculations
$$\pi_{p,q} A_i \cong 0 \text{ if }p<2q,$$
$$\pi_{2*,*} A_i \cong L[v_n^{-1}]_{2*}/(\ell^{k_0},v_1^{k_1},\dots,v_i^{k_{i}}), \text{ and}$$
$$\pi_{2*+1,*} A_i \cong (k^{\times}/\ell^{k_0}) \otimes L[v_n^{-1}]_{2*+2}/(v_1^{k_1},\dots,v_i^{k_i}).$$ \jeremybubble{Can you verify this last bit? \tom{Looks right.}}
\end{proof}

\begin{proposition} \label{prop:homotopy-of-R}
We have the formula
$$\pi_{2*,*} R \cong (\pi_{2*,*} \MGL[v_n^{-1}])^{\wedge}_{\ell,v_1,v_2,\cdots,v_{n-1}}.$$
In particular, none of $\ell,v_1,\cdots,v_{n-1}$ are $0$-divisors inside of $\pi_{2*,*} R$.
\end{proposition}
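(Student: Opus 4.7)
The plan is to express $R$ as an inverse limit of the spectra $M_K$ of Definition \ref{def:adequate} and extract $\pi_{2*,*} R$ from the resulting Milnor exact sequences, using the surjectivity statement of Lemma \ref{lemm:vanising-start} to kill the relevant $\lim^1$ terms. By definition of $(\ell, v_1, \dots, v_{n-1})$-adic completion we have
$$R \simeq \lim_K M_K,$$
where $K$ ranges over tuples $(k_0, k_1, \dots, k_{n-1})$ of non-negative integers ordered componentwise and the structure maps are the evident quotient maps. For each bidegree $(p,q)$ this yields a short exact sequence
$$0 \to \lim{}^1_K \pi_{p+1,q} M_K \to \pi_{p,q} R \to \lim_K \pi_{p,q} M_K \to 0.$$

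First I would specialize to even bidegree $p = 2m$. By Lemma \ref{lemm:vanising-start}, whenever $K' \geq K$ the transition map $\pi_{2m+1, q} M_{K'} \to \pi_{2m+1, q} M_K$ is surjective, so the Mittag-Leffler condition holds and the $\lim^1$ term vanishes. Combined with the explicit formula
$$\pi_{2*,*} M_K \cong (\pi_{2*,*} \MGL[v_n^{-1}])/(\ell^{k_0}, v_1^{k_1}, \dots, v_{n-1}^{k_{n-1}})$$
from the same lemma, this identifies $\pi_{2*,*} R$ with $\lim_K$ of these quotients, which is by definition the claimed $(\ell, v_1, \dots, v_{n-1})$-adic completion of $\pi_{2*,*} \MGL[v_n^{-1}]$.

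For the non-zero-divisor claim, I would invoke the explicit presentation from Proposition \ref{prop:MU-homotopy}, which gives $\pi_{2*,*}\MGL_{(\ell)}[v_n^{-1}] \cong \Z_{(\ell)}[t_i : i \ne \ell^n-1][v_n^{\pm 1}]$ with $v_j = t_{\ell^j - 1}$ for $1 \le j \le n-1$ appearing among the polynomial generators, all independent of each other and of $v_n$. The $(\ell, v_1, \dots, v_{n-1})$-adic completion of this polynomial ring is the power series ring
$$\Z_{(\ell)}[t_i : i \ne \ell^j - 1 \text{ for } 0 \le j \le n][v_n^{\pm 1}]\fpsr{\ell, v_1, \dots, v_{n-1}},$$
in which the variables $\ell, v_1, \dots, v_{n-1}$ are manifestly non-zero-divisors.

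The main obstacle, if any, is verifying the Mittag-Leffler condition, but this is handed to us by Lemma \ref{lemm:vanising-start}; the non-zero-divisor statement is then a purely algebraic consequence of the description of the completion as a power series ring over a polynomial ring. No further motivic input beyond the already-established homotopy groups of the $M_K$ is needed.
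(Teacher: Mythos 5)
Your proof is correct and follows essentially the same route as the paper: identify $R$ as the inverse limit of the $M_K$, apply the Milnor exact sequence, and use the surjectivity clause of Lemma \ref{lemm:vanising-start} to verify Mittag--Leffler and kill the $\lim^1$ term. One small caution on the ``in particular'' clause: since the completion is taken degree-wise in a bigraded ring (and $v_n$ is inverted, so each bidegree is infinitely generated over $\Z_{(\ell)}$), the completion is not literally a formal power series ring in the naive sense, and the adjunction of $\ell$ as a ``formal variable'' is an abuse of notation. The conclusion that $\ell, v_1, \dots, v_{n-1}$ are non-zero-divisors is nonetheless correct; the cleanest justification is that they form a regular sequence in $\pi_{2*,*}\MGL[v_n^{-1}]$, so for each $K$ the map $A/(\ell^{k_0-1}, v_1^{k_1},\dots) \xrightarrow{\ell} A/(\ell^{k_0}, v_1^{k_1},\dots)$ is injective, and passing to the limit shows $\ell$ (and symmetrically each $v_i$) remains injective on the completion.
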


\begin{proof}
By definition, $R$ is the homotopy limit of the $M_K$.  The Milnor exact sequence \cite[Proposition VI.2.15]{goerss2009simplicial} thus says that
$$0 \to \text{lim$^1$} \pi_{2*+1,*} M_K \to \pi_{2*,*} R \to \lim_{K} \pi_{2*,*} M_K \to 0,$$
and the final clause of Lemma \ref{lemm:vanising-start} guarantees that the lim$^1$ term vanishes.
\end{proof}

The following useful lemma is one of the main reasons we assumed the field $k$ to be adequate.
\begin{lemma} \label{lemm:trivial-after-tensor}
Let $x \in \pi_{2*,*} R$ be an element of the ideal $(\ell,v_1,v_2,\cdots,v_{n-1})$.  Then the $R$-module homomorphism
$$x:\Sigma^{|x|} R \to R$$
becomes trivial after tensoring down to $M$.  In other words, the $R$-module and homotopy $M$-module map
$$\Sigma^{|x|} M \simeq \Sigma^{|x|} R \otimes_{R} M \xrightarrow{x} R \otimes_{R} M \simeq M$$
is nullhomotopic.
\end{lemma}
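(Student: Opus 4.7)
The plan is to use the homotopy left unital $R$-algebra structure on $M$ recorded just before the lemma to express multiplication by $x$ as the action of the image $\bar{x}$ of $x$ in $\pi_{**} M$, and then observe that $\bar{x} = 0$ tautologically.

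Concretely, let $u : R \to M$ be the canonical unit and $m : M \otimes_R M \to M$ be a homotopy left unital multiplication. The left unit identity $m \circ (u \otimes_R \id) \simeq \id_M$ lets one rewrite multiplication by $x$ as
\[
\Sigma^{|x|} M \simeq \Sigma^{|x|} R \otimes_R M \xrightarrow{x \otimes_R \id} R \otimes_R M \xrightarrow{u \otimes_R \id} M \otimes_R M \xrightarrow{m} M.
\]
Composing the middle two arrows, this displays multiplication by $x$ as $\Sigma^{|x|} M \xrightarrow{\bar{x} \otimes_R \id} M \otimes_R M \xrightarrow{m} M$, where $\bar{x} := u \circ x \in \pi_{|x|} M$ is the image of $x$ under the unit map $\pi_{**} R \to \pi_{**} M$.

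To conclude, I would invoke Lemma \ref{lemm:vanising-start} with $K = (1, \ldots, 1)$, which gives $\pi_{2*,*} M \cong (\pi_{2*,*}\MGL)[v_n^{-1}]/(\ell, v_1, \ldots, v_{n-1})$; hence $\bar{x} = 0$ whenever $x$ lies in the ideal $(\ell, v_1, \ldots, v_{n-1}) \subset \pi_{2*,*} R$, and the factorization above is null.

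There is no real obstacle in this argument — the whole thing is a tautology once one unwinds the meaning of a homotopy left unital $R$-algebra structure. The substance of the lemma was already dispatched earlier in the section, in verifying that such a structure exists on $M$ at all, which is precisely where the adequacy hypothesis on $k$ is used (via base change of the homotopy left unital $\MGL$-algebra structure on $\MGL/(\ell, v_1, \ldots, v_{n-1})$ along $\MGL \to R$).
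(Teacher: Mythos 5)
Your proof is correct and takes essentially the same approach as the paper: both arguments observe that multiplication by $x$ on $M$ is a homotopy $M$-module map and hence determined by the image $\bar x$ of $x$ under the unit $\pi_{2*,*}R \to \pi_{2*,*}M$, then conclude $\bar x = 0$ from the computation of $\pi_{2*,*}M$. You merely unwind the factorization through $\bar x \otimes_R \id$ and $m$ more explicitly than the paper, which compresses this to ``determined by a class in $\pi_{|x|}M$.''
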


\begin{proof}
This map is a map of homotopy $M$-modules, and hence determined by a class in $\pi_{|x|} M$.  Thus, it suffices to check that $x$ maps to $0$ under the unit map
$$\pi_{2*,*} R \to \pi_{2*,*} M.$$
This is clear by construction.
\end{proof}

\subsection{The completed \texorpdfstring{$R$}{R}-homology\texorpdfstring{\text{ of }$\B \mu_\ell$}{}}
Our chosen orientation of $\BPGL$ induces an orientation of $R$, giving an isomorphism $$R^{**}(\P^\infty) \wequi R^{**}\fpsr{x}$$
as well as a formal group law.  We denote by $[\ell](x) \in R^{**}\fpsr{x}$ the $\ell$-series of this formal group law.  Note that 
$$[\ell](x)=\ell x + \cdots$$
is a power series in $x$, congruent to $\ell x$ modulo $x^2$, and with coefficients in $\BPGL_{2*,*}$.

\begin{lemma} \label{lemm:mul-cohomology-2}
There is a cofiber sequence of $\MGL$-modules \[ \B \mu_{\ell+} \wedge \MGL \to \P^\infty \wedge \MGL \xrightarrow{[\ell]} \P^\infty \wedge \MGL(1)[2]. \]
The map $\P^\infty \wedge \MGL \xrightarrow{[\ell]} \P^\infty \wedge \MGL(1)[2]$ is determined by its dual, which in $\MGL^{**}(\P^{\infty})$ is multiplication by $[\ell](x)$.
\end{lemma}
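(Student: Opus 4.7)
The strategy is to recognize the desired sequence as the Gysin cofiber sequence associated to the line bundle $L := \gamma^{\otimes \ell}$ on $\P^\infty = \B\Gm$ (where $\gamma$ is the tautological line bundle), smashed with $\MGL$ and rewritten via the Thom isomorphism. The starting point is the standard Gysin cofiber sequence of motivic spectra $(L \setminus 0)_+ \to \P^\infty_+ \to \mathrm{Th}(L)$, arising from removing the zero section of $L$.

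Next I would identify the complement of the zero section. The Kummer sequence $1 \to \mu_\ell \to \Gm \xrightarrow{\ell} \Gm \to 1$ (valid since $1/\ell$ is in the base) yields the fiber sequence $\B\mu_\ell \to \B\Gm \xrightarrow{[\ell]} \B\Gm$ of motivic spaces. Since the $\Gm$-torsor $\gamma^{\otimes \ell} \setminus 0 \to \P^\infty$ is classified by the map $[\ell]$, its total space is exactly the fiber $\B\mu_\ell$. Concretely, at the finite stage $\P^n$ one has $\mathcal{O}(\ell) \setminus 0 \cong (\A^{n+1} \setminus 0)/\mu_\ell$ with $\mu_\ell$ acting by scalars, and passing to the colimit over $n$ recovers the standard (Nisnevich or étale) model of $\B\mu_\ell$. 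This identification is the main geometric input of the argument.

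The final step is to smash the Gysin cofiber sequence with $\MGL$ and apply the Thom isomorphism afforded by the orientation, giving an equivalence $\mathrm{Th}(L) \wedge \MGL \simeq \P^\infty \wedge \MGL(1)[2]$. Under this identification, the zero-section / Thom-collapse map becomes multiplication by the first Chern class $c_1(L) = [\ell]_F(c_1(\gamma)) = [\ell](x)$, i.e.\ the formal-group-theoretic $\ell$-series applied to the tautological class $x$. Assembling these three steps gives the claimed cofiber sequence with the promised description of the second map.
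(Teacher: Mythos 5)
Your proposal is correct and follows essentially the same route as the paper. Since the tautological bundle $\gamma$ on $\P^\infty$ is $\scr O(-1)$, your $L = \gamma^{\otimes\ell}$ is exactly the paper's $\scr O(-\ell)$; both proofs take the Gysin cofiber sequence obtained by removing its zero section, identify the sphere bundle $L \setminus 0$ with $\B\mu_\ell$ (you give a short argument via the Kummer sequence and the finite-stage description $(\A^{n+1}\setminus 0)/\mu_\ell$, whereas the paper cites Voevodsky), and then smash with $\MGL$, apply the Thom isomorphism, and read off the Euler class of $\scr O(-\ell)$ as $[\ell](x)$.
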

\begin{proof}
The geometric construction of $\B \mu_\ell$ shows that this space can be obtained as $\scr O(-\ell)_{\P^\infty} \setminus 0$ \cite[Lemma 6.3]{voevodsky2003reduced}, and consequently there is a cofiber sequence \[ \B \mu_\ell \wequi \scr O(-\ell)_{\P^\infty} \setminus 0 \to \scr O(-\ell)_{\P^\infty} \to Th(\scr O(-\ell)_{\P^\infty}). \]
Smashing with $\MGL$ and using the Thom isomorphism we obtain a cofiber sequence of $\MGL$-modules \[ \B \mu_{\ell+} \wedge \MGL \to \P^\infty \wedge \MGL \to \P^\infty \wedge \MGL(1)[2]. \]
The final statement comes from the relation of the Euler class of $\scr O(-\ell)$ with the $\ell$-series in the formal group law, which is essentially the definition of the formal group law.
\end{proof}

\begin{lemma} \label{lemm:homology-start}
There is an equivalence of $R$-modules \[ (R \wedge \Sigma^\infty_+ \B \mu_\ell)_{\ell, v_1, \dots, v_{n-1}}^\wedge \wequi \bigvee_{k=0}^{\ell^n-1} \Sigma^{2k,k} R. \]
\end{lemma}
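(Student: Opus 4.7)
The plan is to apply Lemma \ref{lemm:mul-cohomology-2}: tensoring its cofiber sequence over $\MGL$ with $R$ and then completing at $I = (\ell, v_1, \dots, v_{n-1})$ yields a cofiber sequence of $R$-modules
$$(R \wedge \Sigma^\infty_+ \B\mu_\ell)^\wedge_I \to (R \wedge \Sigma^\infty \P^\infty)^\wedge_I \xrightarrow{[\ell]} (R \wedge \Sigma^\infty \P^\infty(1)[2])^\wedge_I.$$
Via the orientation of $R$, the cohomology of $R \wedge \Sigma^\infty \P^\infty$ is $x R^{**}\fpsr{x}$, and Lemma \ref{lemm:mul-cohomology-2} identifies the connecting map with multiplication by the formal $\ell$-series $[\ell](x) \in R^{**}\fpsr{x}$.

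By Proposition \ref{prop:BP-homotopy}(2), $[\ell](x) \equiv v_n x^{\ell^n} \pmod{I, x^{\ell^n+1}}$, and $v_n \in \pi_{2\ell^n-2,\ell^n-1} R$ is a unit. Graded Weierstrass preparation in the $I$-adically complete bigraded ring $R^{**}\fpsr{x}$ therefore factors
$$[\ell](x) = u(x) \cdot f(x),$$
where $u(x) \in R^{**}\fpsr{x}^\times$ and $f(x) = x^{\ell^n} + c_{\ell^n-1} x^{\ell^n-1} + \cdots + c_0$ is monic of $x$-degree $\ell^n$ with all $c_i \in I$ (the bidegrees are consistent precisely because $v_n$ has bidegree $(2\ell^n-2, \ell^n-1)$). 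In particular $[\ell](x)$ is a non-zero-divisor, so the cohomology long exact sequence collapses to a short exact sequence giving
$$R^{**}\bigl((R \wedge \Sigma^\infty_+ \B\mu_\ell)^\wedge_I\bigr) \cong R^{**}\fpsr{x}/[\ell](x) \cong R^{**}[x]/f(x),$$
a free $R^{**}$-module on the basis $1, x, \dots, x^{\ell^n-1}$ with $x^k$ in bidegree $(2k,k)$.

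These cohomology classes assemble into an $R$-linear comparison map
$$\psi \colon (R \wedge \Sigma^\infty_+ \B\mu_\ell)^\wedge_I \to \bigvee_{k=0}^{\ell^n-1} \Sigma^{2k,k} R,$$
which is an $R^{**}$-cohomology equivalence by construction. To upgrade $\psi$ to an equivalence of $R$-modules, I would reduce modulo $I$ by smashing with $M$: in $M^{**}\fpsr{x}$ the factorization becomes $[\ell](x) = u(x) \cdot x^{\ell^n}$ with $u(x)$ still a unit, so the analogous cofiber computation directly identifies $M \wedge \Sigma^\infty_+ \B\mu_\ell$ with $\bigvee_{k=0}^{\ell^n-1} \Sigma^{2k,k} M$ and shows that the mod-$I$ reduction of $\psi$ is an equivalence. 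A Nakayama-type argument, valid because both source and target of $\psi$ are $I$-complete $R$-modules whose mod-$I$ reductions are finitely-generated free, then promotes this to the desired equivalence.

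The main obstacle is the careful tracking of bigraded degrees in the Weierstrass preparation step and the final Nakayama-style lifting from mod $I$ to the $I$-complete setting. The former is tractable because $v_n$ has exactly the bidegree needed to serve as leading coefficient, and the latter is standard once one notes that everything in sight is finitely-generated free modulo $I$.
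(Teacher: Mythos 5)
Your strategy starts from the same cofiber sequence (Lemma \ref{lemm:mul-cohomology-2}) and the same key congruence $[\ell](x)\equiv v_n x^{\ell^n}\pmod{I,\,x^{\ell^n+1}}$, but then takes a cohomological route: Weierstrass preparation in $R^{**}\fpsr{x}$, construction of a comparison map $\psi$ from a dual basis of $R^{**}$-cohomology classes, and a Nakayama-type lift. The paper instead argues directly at the module level: it splits $R\wedge\P^\infty = Q_1\vee Q_2$ with $Q_1 = \bigvee_{k<\ell^n}\Sigma^{2k,k}R$, shows that $[\ell]|_{Q_2}$ becomes an equivalence after smashing with $M$ (hence after $I$-completion), and reads off the fiber as $Q_1$. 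These are closely related, and in fact the Weierstrass factorization you use does appear in the paper, but only later (Lemma \ref{lemm:weierstrass}/Corollary \ref{cor:cohomologyanswer}), for computing $R^{2*,*}(\B\mu_\ell)$ rather than for this homology statement. Your route is workable but a bit more roundabout, since you must also produce and then invert the comparison map $\psi$.

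The one step I would press you on is the sentence ``in $M^{**}\fpsr{x}$ the factorization becomes $[\ell](x)=u(x)\cdot x^{\ell^n}$ \dots so the analogous cofiber computation directly identifies $M\wedge\Sigma^\infty_+\B\mu_\ell$ with $\bigvee_{k<\ell^n}\Sigma^{2k,k}M$.'' A factorization in the cohomology ring $M^{**}\fpsr{x}$ tells you the effect of $[\ell]\wedge_R M$ on $M$-cohomology, but by itself it does not identify the cofiber as an $R$-module; for that you need to know the behavior of $[\ell]\wedge_R M$ as a map $\bigvee_i M\{\beta_i\}\to\bigvee_j M\{\beta_j\}$, i.e. that the matrix entries of $[\ell]$ lying in $I$ actually become \emph{nullhomotopic} maps $\Sigma^{**}M\to M$ after base change, not merely that the corresponding elements of $\pi_{**}M$ vanish. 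This is precisely what Lemma \ref{lemm:trivial-after-tensor} supplies, and its proof uses the homotopy left unital ring structure on $M$ --- that is, the adequateness hypothesis on $k$ (Definition \ref{def:adequate}). Your argument never invokes this lemma or the ring structure on $M$, so as written the mod-$I$ reduction step has a gap, and it obscures the one place in this lemma where the adequateness of $k$ is actually used. Once that is supplied, the Nakayama-style lift (cofiber of $\psi$ is $I$-complete, vanishes mod $I$, hence vanishes) is fine, since $I$ is finitely generated.
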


\begin{proof}
Lemma \ref{lemm:mul-cohomology-2} implies that there is a cofiber sequence \[ \B \mu_{\ell+} \wedge R \to \P^\infty \wedge R \xrightarrow{[\ell]} \P^\infty \wedge R(1)[2]. \]
Here $\P^\infty \wedge R \wequi \bigvee_{i \ge 0} R\{\beta_i\}$, where $\beta_i$ in bidegree $(2i,i)$ is dual to $x^i$, and the map $[\ell]$ is dual to multiplication by the power series $[\ell](x)$.
Let $M$ denote $M_{(1,1,\dots,1)}$.  The key property of our preferred generators $t_i$ of $L_* \otimes \Z_{(\ell)}$ is that \[ [\ell](x) \equiv v_n x^{\ell^n} \!\mod(x^{\ell^n + 1}, \ell, v_1,v_2,\cdots,v_{n-1}). \]
This implies, using Lemma \ref{lemm:trivial-after-tensor}, that $[\ell] \otimes_R M$ is the homotopy $M$-module map specified by
\begin{align*}
  \beta_0 &\mapsto 0 \\
  \beta_1 &\mapsto 0 \\
  \dots \\
  \beta_{\ell^n-1} &\mapsto 0 \\
  \beta_{\ell^n} &\mapsto v_{n} \beta_0 \\
  \beta_{\ell^n+1} &\mapsto v_{n} \beta_1 + ?\beta_0 \\
  \beta_{\ell^n+2} &\mapsto v_{n} \beta_2 + ?\beta_1 + ?\beta_0 \\
  \dots
\end{align*}
Since $v_n$ is a unit in $R$, it follows that if we split $M \wedge \P^\infty = P_1 \vee P_2$, where $P_1 = \bigvee_{k=0}^{\ell^n-1} M\{\beta_k\}$ and $P_2 = \bigvee_{k \ge \ell^n} M\{\beta_k\}$, then the restriction of $[\ell] \otimes_{R} M$ to $P_1$ is zero whereas the restriction to $P_2$ is an isomorphism $P_2 \to M \wedge \P^\infty$.  In particular, if we similarly set $R \wedge \P^{\infty} = Q_1 \vee Q_2$, where $Q_1 = \bigvee_{k=0}^{\ell^n-1} R$ and $Q_2=\bigvee_{k \ge \ell^n} R$, then the restriction of $[\ell]$ to $Q_2$ is an equivalence onto $R \wedge \P^{\infty}$ after completion at the ideal $(\ell,v_1,\cdots,v_{n-1})$.  It follows that
$$(R \wedge \Sigma^\infty_+ \B \mu_\ell)_{\ell, v_1, \dots, v_{n-1}}^\wedge \wequi Q_1.$$
\end{proof}

\subsection{The \texorpdfstring{$R$}{R}-cohomology\texorpdfstring{\text{ of }$\B \mu_\ell$}{}}

We calculate $R^{2*,*}(\B \mu_{\ell})$ by copying the classical argument for $E^*(\B C_\ell)$, where $E$ is a complex-oriented ring spectrum \cite[\S 5.4]{hopkins2000characters}.

\begin{lemma} \label{lemm:weierstrass}
In the ring $R^{2*,*}\fpsr{x}$, we have $[\ell](x) = u g(x)$, where $u$ is a unit and \[ g(x) = a_{\ell^n} x^{\ell^n} + a_{\ell^{n-1}}x^{\ell^n - 1} + \dots + \ell x, \] for some coefficients $a_i \in R^{2*,*}$ such that $a_{\ell^n}$ is a unit.
\end{lemma}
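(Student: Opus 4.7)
The plan is to apply the Weierstrass preparation theorem to $[\ell](x)$ in the formal power series ring $R^{2*,*}\fpsr{x}$. First I would write $[\ell](x) = \sum_{i \ge 1} c_i x^i$ with $c_i \in R^{2*,*}$, and set $I = (\ell, v_1, \dots, v_{n-1})$. Proposition \ref{prop:BP-homotopy}(2) gives
\[ [\ell](x) \equiv v_n x^{\ell^n} \pmod{(\ell, v_1, \dots, v_{n-1}, x^{\ell^n+1})}, \]
so $c_i \in I$ for $1 \le i < \ell^n$ and $c_{\ell^n} \equiv v_n \pmod I$.

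Next I would verify that $c_{\ell^n}$ is a unit. By Proposition \ref{prop:homotopy-of-R}, $R^{2*,*}$ is the $I$-adic completion of $\pi_{2*,*}\MGL_{(\ell)}[v_n^{-1}]$, and in particular is $I$-adically complete. Since $v_n$ has been inverted in $R$, the class of $c_{\ell^n}$ in $R^{2*,*}/I$ is a unit; $I$-completeness promotes this to the statement that $c_{\ell^n}$ is a unit in $R^{2*,*}$ (write $c_{\ell^n} = v_n(1 + v_n^{-1}(c_{\ell^n} - v_n))$ and use that $1 + I$ consists of units). With these inputs in hand, the standard Weierstrass preparation theorem produces a unit $u(x) \in R^{2*,*}\fpsr{x}$ together with a polynomial
\[ \tilde g(x) = c_{\ell^n} x^{\ell^n} + \tilde b_{\ell^n - 1} x^{\ell^n - 1} + \dots + \tilde b_0, \qquad \tilde b_j \in I \ (j < \ell^n), \]
such that $[\ell](x) = u(x)\tilde g(x)$.

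The remaining step is to normalize $\tilde g$ into the asserted form. Comparing constant terms, $0 = [\ell](0) = u(0)\tilde b_0$ forces $\tilde b_0 = 0$; comparing $x$-coefficients, $\ell = u(0) \tilde b_1$ gives $\tilde b_1 = \ell \cdot u(0)^{-1}$. Replacing $(u, \tilde g)$ by $(u(0)^{-1} u,\, u(0)\tilde g)$ rescales so that the coefficient of $x$ becomes exactly $\ell$, while the leading coefficient becomes $a_{\ell^n} = u(0) c_{\ell^n}$, still a unit, and the remaining $a_j = u(0) \tilde b_j$ continue to lie in $I$. I do not anticipate a real obstacle here: the argument is a direct transcription of the classical Weierstrass-preparation computation of $E^*(\B C_\ell)$ for a complex-oriented ring spectrum \cite[\S 5.4]{hopkins2000characters}, and the two nontrivial ingredients — the congruence for $[\ell](x)$ and the $I$-adic completeness of $R^{2*,*}$ — are handed to us by the cited propositions.
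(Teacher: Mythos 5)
Your proof is correct and follows essentially the same route as the paper: extract from Proposition \ref{prop:BP-homotopy}(2) that $[\ell](x)$ is a distinguished power series of Weierstrass degree $\ell^n$ over the $I$-adically complete ring $R^{2*,*}$, then invoke Weierstrass preparation and normalize. The only cosmetic difference is that you apply Weierstrass preparation to $[\ell](x)$ itself and then argue $\tilde b_0 = 0$ from $[\ell](0)=0$, whereas the paper applies it directly to $[\ell](x)/x$ (which already has $x$-constant term $\ell$); you are also somewhat more explicit about verifying the completeness hypothesis via Proposition \ref{prop:homotopy-of-R}.
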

\begin{proof}
The key property of our preferred generators $t_i$ of $L_* \otimes \Z_{(\ell)}$ is that \[ [\ell](x) \equiv t_{\ell^n - 1} x^{\ell^n} \!\mod(x^{\ell^n + 1}, \ell, t_1, \dots, t_{\ell^n-2}). \]
Since $t_{\ell^n-1}$ is a unit in $R_{2*,*}$, we may apply the Weierstrass Preparation Theorem \cite[Theorem 2.10]{omalley1972weierstrass} to the power series $\frac{[\ell](x)}{x}$.  This yields an expression
$$\frac{[\ell](x)}{x} = h(x) v,$$
where $v$ is a unit in $R^{2*,*}\fpsr{x}$ and $h(x)$ is a monic degree $\ell^n$ polynomial with coefficients in $R^{2*,*}$ and constant term $a_{\ell^n}^{-1} \ell$ for some unit $a_{\ell^n} \in R^{2*,*}$.
We define $g(x)= a_{\ell^n} xh(x)$ and $u = a_{\ell^n}^{-1}v$.
This concludes the proof.
\end{proof}

\begin{corollary} \label{cor:cohomologyanswer}
There are canonical isomorphisms
$$R^{2*,*}(\B\mu_{\ell}) \cong R^{2*,*}\fpsr{x}/[\ell](x) \cong R^{2*,*}[x]/g(x).$$
In particular, $R^{2*,*}(\B\mu_{\ell})$ is a finite free $R^{**}$-module with basis $1, x, \dots, x^{\ell^n-1}$.
\end{corollary}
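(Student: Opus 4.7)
The plan is to apply $R^{**}$-cohomology to the cofiber sequence of Lemma~\ref{lemm:mul-cohomology-2} (smashed with $R$), producing a long exact sequence in which the connecting map on $R^{**}(\P^\infty) \cong R^{**}\fpsr{x}$ is multiplication by the $\ell$-series $[\ell](x)$. The first step is to show that this multiplication is injective: since $[\ell](x) = \ell x + O(x^2)$ and $\ell \in \pi_{**}R$ is not a zero-divisor by Proposition~\ref{prop:homotopy-of-R}, the lowest-degree nonzero term of $[\ell](x)\cdot f$ equals $\ell$ times the leading coefficient of $f$, hence is nonzero whenever $f$ is. The long exact sequence then collapses to short exact sequences, giving $R^{2*,*}(\B\mu_\ell) \cong R^{2*,*}\fpsr{x}/[\ell](x)$, and invoking Lemma~\ref{lemm:weierstrass} to rewrite $[\ell] = u \cdot g$ with $u$ a unit yields the first claimed isomorphism.

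Next I would study the natural $R^{2*,*}$-module map $\phi \colon R^{2*,*}[x]/g(x) \to R^{2*,*}\fpsr{x}/g(x)$ induced by the inclusion of polynomials into power series. Since the leading coefficient $a_{\ell^n}$ of $g$ is a unit, the source is free of rank $\ell^n$ with basis $1, x, \dots, x^{\ell^n-1}$ in bidegrees $(2k,k)$. The target is likewise free of rank $\ell^n$ in the same bidegrees: by Lemma~\ref{lemm:homology-start}, $(R \wedge \Sigma^\infty_+ \B\mu_\ell)^\wedge_I \wequi \bigvee_{k=0}^{\ell^n-1} \Sigma^{2k,k} R$ with $I = (\ell, v_1, \dots, v_{n-1})$, and since $R$ is $I$-complete, dualizing this equivalence identifies $R^{**}(\B\mu_\ell)$ with the corresponding finitely generated free $R^{**}$-module.

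Finally, I would verify that $\phi$ is an isomorphism by reducing modulo $I$. The congruence $[\ell](x) \equiv v_n x^{\ell^n} \pmod{(I, x^{\ell^n+1})}$ of Proposition~\ref{prop:BP-homotopy}(2), combined with the factorization $[\ell] = ug$, forces the middle coefficients $a_1, \dots, a_{\ell^n-1}$ of $g$ to lie in $I$. Consequently both source and target of $\phi$ reduce modulo $I$ to $(R^{2*,*}/I)[x]/x^{\ell^n}$, and the reduced map is the identity. Since $R^{2*,*}$ is $I$-adically complete, $I$ lies in its Jacobson radical; Nakayama applied to the cokernel of $\phi$ then gives surjectivity, and a surjection between free modules of the same finite rank is automatically an isomorphism. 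The main obstacle is this final step, and in particular the extraction that the middle coefficients of $g$ lie in $I$ from the Weierstrass factorization; the cofiber-sequence manipulation and Nakayama application are otherwise fairly standard.
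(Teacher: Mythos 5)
Your overall shape (cofiber sequence from Lemma~\ref{lemm:mul-cohomology-2}, collapse of the long exact sequence, Weierstrass preparation from Lemma~\ref{lemm:weierstrass}) is correct, but both steps are handled differently from the paper, and the first step as you argue it has a gap.

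For the long exact sequence to collapse at bidegree $(2*,*)$, you need to control the boundary term, which lives in $R^{2*-1,*-1}(\P^\infty)$, equivalently a product of groups $R^{2m+1,m}$. Your injectivity argument uses that $\ell$ is not a zero-divisor in $\pi_{**}R$, but Proposition~\ref{prop:homotopy-of-R} only establishes this on the even diagonal $\pi_{2*,*}R$; it says nothing about the coefficient groups that actually appear here. The paper instead observes that these odd-diagonal groups simply \emph{vanish}: the vanishing $\pi_{p,q}\MGL = 0$ for $p<2q$ from Lemma~\ref{lemm:MGL-vanishing} propagates through localization and completion (via Lemma~\ref{lemm:vanising-start} and the Milnor sequence) to give $R^{2m+1,m}=0$. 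This vanishing is what makes the sequence collapse, and it is also what rescues your injectivity claim --- so you should cite it explicitly rather than leaning on the non-zero-divisor statement alone.

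For the second isomorphism, your route through Lemma~\ref{lemm:homology-start}, reduction mod $I$, and Nakayama is correct but is a substantial detour and imports a genuinely harder result. The paper's proof is much shorter: once Lemma~\ref{lemm:weierstrass} gives $[\ell](x)=u\,g(x)$ with $g$ a degree-$\ell^n$ polynomial whose leading coefficient is a unit, the usual division algorithm (which is precisely the Weierstrass division half of that lemma) shows directly that $R^{2*,*}\fpsr{x}/g(x)\cong R^{2*,*}[x]/g(x)$, free on $1,x,\dots,x^{\ell^n-1}$. No comparison with the completed homology and no Nakayama argument is needed; in particular you do not need to establish that the lower coefficients of $g$ lie in the ideal $I$, which you correctly flagged as the sticking point in your approach.
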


\begin{proof}
The first statement follows from Lemma \ref{lemm:mul-cohomology-2} via the long-exact sequence
$$R^{2*,*}(\Sigma^{-1,0} \P^{\infty}) \longleftarrow R^{2*,*}(\B\mu_{\ell}) \longleftarrow R^{2*,*}(\P^\infty) \stackrel{[\ell](x)}{\longleftarrow} R^{2*,*}(Th(\scr O(-\ell)_{\P^\infty})).$$
The key point is that $R^{2*,*}(\Sigma^{-1,0} \P^{\infty}) \cong 0$ by Lemma \ref{lemm:MGL-vanishing}.
The second statement then follows from Lemma \ref{lemm:weierstrass}.
\end{proof}

\subsection{The transfer and its section}
For this section, we assume furthermore that $k$ contains a primitive $\ell$-th root of unity.
Thus $\mu_\ell \wequi C_\ell$.

Our first goal will be to determine a formula for the stable transfer $$R \to R^{\B C_\ell}.$$  Specifically, applying $\pi_{2*,*}$ gives an $R^{2*,*}$-linear map $$\text{Tr}(\--):R^{2*,*} \to R^{2*,*}(\B C_{\ell}) \cong R^{2*,*}(\B\mu_{\ell}) \cong R^{2*,*}\fpsr{x}/[\ell](x) \cong R^{2*,*}[x]/g(x),$$
and we seek a formula for this homomorphism.

\begin{lemma}
The stable transfer map
$$\text{Tr}(\--):R^{2*,*} \to R^{2*,*}[x]/g(x)$$
is the unique $R^{2*,*}$-linear homomorphism such that
$$\text{Tr}(1) = \frac{g(x)}{x}.$$
\end{lemma}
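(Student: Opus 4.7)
The plan is to use the projection formula (Lemma \ref{lemm:transfer-properties}(4)) to reduce the problem to identifying the single element $\Tr(1) \in R^{2*,*}(\B C_\ell) \cong R^{2*,*}[x]/g(x)$, and then pin $\Tr(1)$ down using two constraints: it must be annihilated by $x$, and its image under $\Res$ must equal $\ell$.

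For $R^{2*,*}$-linearity, the projection formula applied with $b = r\cdot 1$ for $r \in R^{2*,*}$ gives $\Tr(ra) = r \cdot \Tr(a)$. Any $R^{2*,*}$-linear map out of $R^{2*,*}$ is determined by its value at $1$, so the uniqueness clause is automatic. To identify $\Tr(1)$, apply the projection formula again with $a=1$ and $b=x$: since $\Res: R^{2*,*}(\B C_\ell) \to R^{2*,*}$ is pullback along $\B e \to \B C_\ell$ we have $\Res(x)=0$, and hence $\Tr(1) \cdot x = \Tr(\Res(x)) = 0$ in $R^{2*,*}[x]/g(x)$.

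I next identify the annihilator of $x$ in $R^{2*,*}[x]/g(x)$. Writing $g(x) = x\cdot(g(x)/x)$ and using that $a_{\ell^n}$ is a unit, every class has a unique representative of degree $< \ell^n$. If such a lift $\alpha$ satisfies $x\alpha = q(x) g(x)$ in $R^{2*,*}[x]$, then cancelling the non-zerodivisor $x$ yields $\alpha = q(x) \cdot g(x)/x$, and the degree bound forces $q$ to be a scalar $c \in R^{2*,*}$. Conversely $g(x)/x$ itself is annihilated by $x$ modulo $g(x)$, so the annihilator is the free rank-one $R^{2*,*}$-submodule generated by $g(x)/x$. Thus $\Tr(1) = c \cdot g(x)/x$ for some $c \in R^{2*,*}$.

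To nail down $c$, note that $g(x)/x = a_{\ell^n}x^{\ell^n-1}+\dots+\ell$ has constant term $\ell$, so $\Res(g(x)/x) = \ell$ and hence $\Res(\Tr(1)) = c\ell$. By Lemma \ref{lemm:transfer-properties}(3), $\Res\circ \Tr$ is multiplication by $\ell$, so $c\ell = \ell$. Since $\ell$ is a non-zerodivisor in $R^{2*,*}$ by Proposition \ref{prop:homotopy-of-R}, we conclude $c = 1$. The only non-formal step is the annihilator identification, which is routine commutative algebra; the real content of the lemma has already been absorbed into the structural results of the previous subsections, namely $R^{2*,*}(\B C_\ell) \cong R^{2*,*}[x]/g(x)$ and the non-zerodivisor property of $\ell$.
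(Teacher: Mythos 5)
Your proof is correct and takes essentially the same route as the paper's: use the projection formula to show $x\cdot\Tr(1)=0$, identify the annihilator of $x$ in $R^{2*,*}[x]/g(x)$ as the free rank-one submodule on $g(x)/x$, then use $\Res\circ\Tr=\ell$ together with the non-zero-divisor property of $\ell$ to pin down the coefficient. The only differences are that you spell out the annihilator computation (which the paper compresses to ``a straightforward calculation'') and make explicit the $R^{2*,*}$-linearity of $\Tr$ via the projection formula, both of which are fine.
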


\begin{proof}
The argument is exactly analogous to that in \cite[6.15]{hopkins2000characters}; we recall it for the convenience of the reader.
Note that the restriction map $R^{2*,*}(\B C_\ell) \to R^{2*,*}(Be)$ is the $R^{2*,*}$-algebra homomorphism sending $x$ to $0$.   The projection formula of Lemma \ref{lemm:transfer-properties} thus implies that $x \Tr(1) = 0$.
Corollary \ref{cor:cohomologyanswer} implies via a straightforward calculation that the only classes in $R^{2*,*}(\B C_{\ell})$ that are killed by $x$ are the $R^{2*,*}$-multiples of $\frac{g(x)}{x}$.
Thus $$\Tr(1)=r \frac{g(x)}{x},$$ for some element $r \in R^{2*,*}$.
Since the constant term of $g(x)/x$ is $\ell$, Lemma \ref{lemm:transfer-properties}(3) implies that $r\ell=\ell$.
By Proposition \ref{prop:homotopy-of-R}, $\ell$ is not a $0$-divisor in $\pi_{2*,*} R$, and it follows that $r=1$.
This concludes the proof.
\end{proof}

Recall that the stable transfer arises from a map
$$\Sigma^{\infty}_+ \B C_\ell \to \1.$$
After tensoring this map with $R$, we obtain a map
$$R \wedge \Sigma^{\infty}_+ \B C_\ell \to R.$$
Since the codomain $R$ is complete, we may complete the domain of the above to obtain
$$(R \wedge \Sigma^{\infty}_+ \B C_\ell)^{\wedge}_{\ell,v_1,\cdots,v_{n-1}} \to R.$$

\begin{corollary} \label{cor:gamma}
There is an $R$-module homomorphism 
$$\gamma:R \to (R \wedge \Sigma^{\infty}_+ \B C_\ell)^{\wedge}_{\ell,v_1,\cdots,v_{n-1}}$$
such that the composite of $\gamma$ with the transfer
$$(R \wedge \Sigma^{\infty}_+ \B C_\ell)^{\wedge}_{\ell,v_1,\cdots,v_{n-1}} \to R$$
is the identity on $R$.
\end{corollary}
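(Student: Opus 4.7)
The plan is to use $R$-linearity of the transfer to reduce to exhibiting the unit $1 \in \pi_{0,0}(R)$ as the image of a specific basis element. Write $M := (R \wedge \Sigma^\infty_+ \B C_\ell)^\wedge_{\ell, v_1, \dots, v_{n-1}}$.

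First, by Lemma \ref{lemm:homology-start}, $M$ is a free $R$-module with basis $\beta_0, \dots, \beta_{\ell^n - 1}$, where $\beta_k \in \pi_{2k, k}(M)$ is the class dual to $x^k \in R^{2k, k}(\B C_\ell)$ under the Kronecker pairing. Since $\Tr: M \to R$ is $R$-linear and $M$ is free of finite rank, $\Tr$ is entirely determined by the classes $\Tr(\beta_k) \in \pi_{2k, k}(R)$.

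Second, I would compute these $\Tr(\beta_k)$ using the standard adjunction $\langle \alpha, \Tr(\beta)\rangle = \langle \Tr^*(\alpha), \beta\rangle$ between the cohomological and homological transfers, which is a formal consequence of the fact that both arise from the same spectrum map $\Sigma^\infty_+ \B C_\ell \to \1$ and of the naturality of the Kronecker pairing. Taking $\alpha = 1$ and using the previous lemma, which identifies $\Tr^*(1) = g(x)/x = \sum_{k=0}^{\ell^n - 1} a_{k+1} x^k$, one obtains
\[ \Tr(\beta_k) = \langle 1, \Tr(\beta_k)\rangle = \langle g(x)/x, \beta_k\rangle = a_{k+1}. \]
In particular $\Tr(\beta_{\ell^n - 1}) = a_{\ell^n}$, which is a unit in $\pi_{*,*}(R)$ by Lemma \ref{lemm:weierstrass}.

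Finally, I would define $\gamma: R \to M$ to be the unique $R$-module map sending $1 \in \pi_{0,0}(R)$ to $a_{\ell^n}^{-1} \beta_{\ell^n - 1}$; this class lies in $\pi_{0,0}(M)$ because $a_{\ell^n}^{-1}$ has bidegree $(-(2\ell^n - 2), -(\ell^n - 1))$ while $\beta_{\ell^n - 1}$ has bidegree $(2\ell^n - 2, \ell^n - 1)$. By $R$-linearity, $\Tr \circ \gamma$ sends $1$ to $a_{\ell^n}^{-1} \Tr(\beta_{\ell^n - 1}) = 1$, hence equals the identity of $R$. The main obstacle will be carefully justifying the adjunction formula in this completed bigraded setting; however, this should be a formal consequence of the six functors formalism on $\SH(k)$ together with the finite rank of $M$ as an $R$-module (so that the relevant completions interact cleanly with the pairing).
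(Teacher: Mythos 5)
Your proposal is correct and follows essentially the same route as the paper's proof: both identify $\gamma$ with the $R$-module map dual (under the evaluation pairing) to $a_{\ell^n}^{-1} x^{\ell^n - 1} \in R^{2*,*}[x]/g(x)$, and both verify $\Tr \circ \gamma \simeq \id$ from the formula $\Tr(1) = g(x)/x$ and the fact that $a_{\ell^n}$ is a unit. You spell out the Kronecker-pairing computation of $\Tr(\beta_k)$ a bit more explicitly than the paper does, but the idea is identical.
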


\begin{proof}
Since $(R \wedge \B C_\ell)^{\wedge}_{\ell,v_1,\cdots,v_{n-1}}$ is a finitely generated free $R$-module, we may specify the map $\gamma$ by specifying its $R$-linear dual.  This dual element is specified by a class in $R^{2*,*}[x]/g(x)$, which we know to be a free $R^{2*,*}$-module generated by $1,x,\cdots,x^{\ell^n-1}$.
Recall that $g(x)=a_{\ell^n} x^{\ell^n} + a_{\ell^{n}-1} x^{\ell^n-1} + \cdots \ell x$, where $a_{\ell^n}$ is a unit in $R^{2*,*}$.
We choose $\gamma$ to be dual to the class $a_{\ell^n}^{-1} x^{\ell^n-1}$.  The result then follows from the formula $$\Tr(1)=\frac{g(x)}{x},$$ the fact that the coefficient of $x^{\ell^n-1}$ in $\frac{g(x)}{x}$ is $a_{\ell^n}$, and the meaning of specifying a map via its dual\tombubble{I would like to say something clearer here, maybe.}.
\end{proof}

The following is the main result of this section.
\begin{corollary} \label{cor:vanishing-smash-product}
Let $k$ be an adequate field (see Definition \ref{def:adequate}) of exponential characteristic $e \ne \ell$ containing a primitive $\ell$-th root of unity.

Let $A \in \NAlg(\SH(S))_{R/}$, and assume that $\ell^k = 0 \in \pi_0(A)$ for some $k \ge 0$.
Then $A \wequi 0$.
\end{corollary}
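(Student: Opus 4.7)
The plan is to argue by induction on $k$. The base case $k = 0$ is immediate: $\ell^0 = 1 = 0 \in \pi_0(A)$ forces the identity of $A$ to be null, so $A \wequi 0$. For the inductive step, assume $\ell^k = 0$ in $\pi_0(A)$ with $k \ge 1$ and that the result is known for $k-1$; the goal is to deduce $\ell^{k-1} = 0$ in $\pi_0(A)$.

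The main input is the multiplicative norm afforded by the normed structure. Since $k$ contains a primitive $\ell$-th root of unity we have $\B\mu_\ell \wequi \B C_\ell$, so there is a norm $\N \colon \pi_0(A) \to A^0(\B C_\ell)$. Apply $\N$ to the equation $\ell^k = 0$: on one hand $\N(\ell^k) = \N(0) = 0$ by Lemma \ref{lemm:norm-props}(2), while on the other hand Corollary \ref{cor:norm-trick} gives
$$\N(\ell^k) \equiv -\ell^{k-1}\Tr(1) \pmod{\ell^k}$$
in $A^0(\B C_\ell)$. Because $\ell^k$ vanishes in the ring $A^0 = \pi_0(A)$ acting on $A^0(\B C_\ell)$, the congruence becomes an equality, and hence $\ell^{k-1}\Tr(1) = 0$ in $A^0(\B C_\ell)$.

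To convert this into a relation in $A^0$ I use the section $\gamma$ from Corollary \ref{cor:gamma}. Since $A$ lies under $R$ it is in particular an $R$-module, and because $R$ is itself $(\ell, v_1, \dots, v_{n-1})$-complete so is $A$; thus taking $R$-linear maps into $A$ identifies $[(R \wedge \Sigma^\infty_+ \B C_\ell)^\wedge, A]_R$ with $A^0(\B C_\ell)$. The $R$-linear section $\gamma$ then induces a $\pi_0(A)$-linear homomorphism $\gamma^* \colon A^0(\B C_\ell) \to A^0$, and tracing definitions, $\Tr(1) \in A^0(\B C_\ell)$ corresponds to the composite of the transfer with the unit $R \to A$, so the identity $\tau \circ \gamma = \id_R$ forces $\gamma^*(\Tr(1)) = 1$. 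Applying $\gamma^*$ to the vanishing $\ell^{k-1}\Tr(1) = 0$ then yields $\ell^{k-1} = 0$ in $\pi_0(A)$, and the inductive hypothesis completes the proof.

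The only delicate step is the identification $\gamma^*(\Tr(1)) = 1$, which requires matching the cohomological $\Tr$ of Section \ref{sec:norm-and-transfer} with the $R$-linear dual of $\gamma$ after completion; once one knows $A$ is automatically complete as an $R$-module, this is a bookkeeping exercise. Everything else is a direct application of Lemma \ref{lemm:norm-props}, Corollary \ref{cor:norm-trick}, and Corollary \ref{cor:gamma}.
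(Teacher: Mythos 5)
Your argument is structurally identical to the paper's: induct on $k$, apply Corollary \ref{cor:norm-trick} to get $\ell^{k-1}\Tr(1)=0$ in $A^0(\B C_\ell)$, and then use the section $\gamma$ of Corollary \ref{cor:gamma} to convert that into the vanishing of $\ell^{k-1}$ in $\pi_0(A)$. The base case and inductive step match, and your reading of the norm formula (the mod-$\ell^k$ congruence becoming an equality because $\ell^k$ already kills $A$) is exactly right.

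However, the justification you give for the one delicate step is not correct as stated. You assert that $A$ is $(\ell,v_1,\dots,v_{n-1})$-complete \emph{because} $R$ is: this is not a valid general principle. Modules over a complete ring need not be complete with respect to the defining ideal --- $R[1/v_i]$ is an $R$-module with $\Hom_R(R[1/v_i],R[1/v_i]) \ne 0$, so it fails the completeness criterion $\Hom_R(R[1/v_i],\ph)=0$. What actually gives completeness of $A$ (and what makes the identification $[(R\wedge\Sigma^\infty_+\B C_\ell)^\wedge_{\ell,v_1,\dots,v_{n-1}}, A]_R \simeq [R\wedge\Sigma^\infty_+\B C_\ell, A]_R$ usable) must use the hypothesis $\ell^k = 0$ in $\pi_0(A)$: multiplication by $\ell^k$ is null on $A$, so $A$ is $\ell$-complete. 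Note the paper's own published wording (``Using the fact that $R$ is complete'') is similarly laconic on this point, so your reasoning mirrors the source rather than introducing a new gap --- but the general principle ``$R$ complete implies $A$ complete'' that you invoke is simply false, and if you want to defend this step you should route the argument through the torsion hypothesis on $A$ rather than completeness of $R$.
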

\begin{proof}
If $k=0$, then this is just the statement that a ring $A$ is zero if and only if $1=0$ in $\pi_{0}A$.
In general, it will suffice by induction to prove that $\ell^{k-1} = 0 \in \pi_0(A)$.
Using Corollary \ref{cor:norm-trick} (and naturality of norms) we find that \[ 0 = \N(\ell^k) = -\ell^{k-1} \Tr(1) \in A^0(\B C_\ell). \]
This implies that the composite
$$\Sigma^{\infty}_+ \B C_{\ell} \xrightarrow{\Tr(1)} \1 \to R \stackrel{\ell^{k-1}}{\to} A$$
is nullhomotopic, where the last map is $\ell^{k-1}$ times the unit map.  Using the fact that $R$ is complete, we obtain that the composite of $R$-module maps
$$(R \wedge \Sigma^{\infty}_+ \B C_\ell)^{\wedge}_{\ell,v_1,\cdots,v_{n-1}} \xrightarrow{\Tr(1)} R \stackrel{\ell^{k-1}}{\to} A$$
is null.  Consequently \[ 0 = \ell^{k-1} \Tr(1) \gamma = \ell^{k-1} \in A^0, \] making use of Corollary \ref{cor:gamma}.
\end{proof}

\section{Detecting zero rings} \label{sec:detection}
\subsection{Detection in abstract \texorpdfstring{$\infty$-}{∞-}categories}

Recall the notion of a homotopy left unital ring from Definition \ref{def:homotopy-left-unital}.
\begin{lemma} \label{lemm:detect-zero-ring}
Let $E \in \scr C$ carry a homotopy left unital multiplication.
Then $E \wequi 0$ if and only if the unit $1: \1 \to E$ is null homotopic.
\end{lemma}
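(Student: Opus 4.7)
The plan is essentially a one-line manipulation, split into the two implications.

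For the ``only if'' direction, suppose $E \simeq 0$. Then $E$ is a zero object of $\scr C$, so every morphism with target $E$ is nullhomotopic; in particular the unit $1: \1 \to E$ is.

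For the ``if'' direction, suppose $1: \1 \to E$ is nullhomotopic. The key step is to tensor this nullhomotopy with $\id_E$: since the monoidal product preserves zero morphisms in each variable, the map
\[ 1 \otimes \id_E : \1 \otimes E \longrightarrow E \otimes E \]
is nullhomotopic. Postcomposing with $m: E \otimes E \to E$ and invoking the homotopy left unital condition (Definition \ref{def:homotopy-left-unital}), the composite $\1 \otimes E \xrightarrow{1 \otimes \id} E \otimes E \xrightarrow{m} E$ is homotopic to the unitor identification $\1 \otimes E \simeq E$. Thus $\id_E$ factors through a nullhomotopic map, hence is itself nullhomotopic, whence $E \simeq 0$.

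There is no real obstacle here; the only subtle point is that one needs the nullhomotopy of $1$ to be functorial enough to tensor with $\id_E$, but this is automatic since $-\otimes E: \scr C \to \scr C$ is a functor and therefore sends nullhomotopic maps to nullhomotopic maps. No further hypotheses on $\scr C$ (beyond being a symmetric monoidal $\infty$-category with a zero object implicit in ``$E \simeq 0$'') are used.
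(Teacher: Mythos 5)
Your proof is correct and matches the paper's argument: both observe that by homotopy left unitality, $\id_E$ factors as $E \wequi \1 \otimes E \xrightarrow{1 \otimes \id_E} E \otimes E \xrightarrow{m} E$, so nullhomotopy of $1$ forces $\id_E$ to be nullhomotopic. The converse direction is handled identically (the paper just calls it ``clear'').
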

\begin{proof}
By definition, the identity map on $E$ factors as $E \wequi \1 \wedge E \xrightarrow{1 \wedge \id} E \wedge E \xrightarrow{m} E$ and hence is null homotopic as soon as $1$ is.
The converse is clear.
\end{proof}

\begin{lemma} \label{lemm:detect-abstract}
Let $\scr C$ be a stable symmetric monoidal $\infty$-category which has filtered colimits compatible with $\otimes$.
Let $\{L_i \in \scr C\}_i$ be invertible objects and $\{x_i: L_i^{-1} \to \1\}$ maps.
Put
\begin{itemize}
\item $A_n := \1/(x_1, \dots, x_n) = \1/x_1 \otimes \dots \otimes \1/x_n$;
\item $A_\infty := \1/(x_1, x_2, \dots) = \colim_n \1/(x_1, \dots, x_n)$;
\item $B_{n+1} := \1/(x_1, \dots, x_n)[x_{n+1}^{-1}] = \1/(x_1, \dots, x_n) \otimes \1[x_{n+1}^{-1}]$.
\end{itemize}

Let $E \in \scr C$.
\begin{enumerate}
\item Let $M \ge N \ge 0$ such that (i) $E \wedge A_M \wequi 0$ and (ii) $E \wedge B_{n+1} \wequi 0$ for $n = N, N+1, \dots, M-1$.
  Then $E \wedge A_N \wequi 0$.
\item Suppose that $\1 \in \scr C$ is compact and both $E$ and each $A_n$ (for $n \ge N$) admit a homotopy left unital multiplication.
  Suppose further that (i) $E \wedge A_\infty \wequi 0$ and (ii) $E \wedge B_{n+1} \wequi 0$ for $n = N, N+1, N+2, \dots$.
  Then $E \wedge A_N \wequi 0$.
\end{enumerate}
\end{lemma}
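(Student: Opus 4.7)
My plan is to prove (1) by a straightforward downward induction, and then to deduce (2) from (1) by leveraging compactness of $\1$ to replace the infinite colimit hypothesis with finite vanishing at some stage $M$.

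For part (1), I would induct downward on $n$ from $M$ to $N$. The base case $n=M$ is the hypothesis. For the inductive step, assume $E \otimes A_{n+1} \simeq 0$ for some $n \in \{N, N+1, \dots, M-1\}$. Smashing the defining cofiber sequence
\[ L_{n+1}^{-1} \otimes A_n \xrightarrow{x_{n+1}} A_n \to A_{n+1} \]
with $E$, we see that multiplication by $x_{n+1}$ is an equivalence on $E \otimes A_n$. Hence the telescope $E \otimes A_n[x_{n+1}^{-1}]$—which is the sequential colimit of equivalences—is just $E \otimes A_n$ itself. But this telescope is also $E \otimes B_{n+1}$, which vanishes by hypothesis (ii). Thus $E \otimes A_n \simeq 0$, completing the induction.

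For part (2), the strategy is to produce some $M \ge N$ with $E \otimes A_M \simeq 0$, then apply (1). Consider the composite $u : \1 \to E \otimes A_N \to E \otimes A_\infty$, which is the canonical unit of $E \otimes A_\infty$; this is null since $E \otimes A_\infty \simeq 0$. Because $\otimes$ preserves filtered colimits, $E \otimes A_\infty = \colim_M E \otimes A_M$, and compactness of $\1$ yields
\[ [\1, E \otimes A_\infty] \;\simeq\; \colim_M [\1, E \otimes A_M]. \]
Therefore there exists $M \ge N$ such that the composite $\1 \to E \otimes A_N \to E \otimes A_M$ is already null. By hypothesis $E$ and $A_M$ each admit a homotopy left unital multiplication; combining them via the symmetry gives such a structure on $E \otimes A_M$, with unit the tensor product $e_E \otimes e_{A_M}$ of the two units. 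Since the connecting maps $A_N \to A_M$ are the canonical ones (built from iterated cofiber maps, preserving the distinguished point), this tensor-unit agrees with the null composite above. Lemma \ref{lemm:detect-zero-ring} then forces $E \otimes A_M \simeq 0$, and part (1), applied with this $M$ (hypothesis (ii) of (2) supplies the requisite vanishing of $E \otimes B_{n+1}$ for $n = N, \dots, M-1$), concludes $E \otimes A_N \simeq 0$.

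The substantive step is the compactness argument and the verification that the two units agree; the downward induction itself is routine once one observes that $A_n[x_{n+1}^{-1}] \simeq A_n$ whenever $x_{n+1}$ acts as an equivalence. No new ingredients beyond the compatibility of $\otimes$ with filtered colimits, the compactness of $\1$, and Lemma \ref{lemm:detect-zero-ring} are needed.
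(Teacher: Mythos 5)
Your argument is correct and follows the paper's proof in both parts: the downward induction on the cofiber sequence for $A_n \to A_{n+1}$ together with the telescope description of $B_{n+1}$ in part (1), and the reduction to (1) via compactness of $\1$ and Lemma~\ref{lemm:detect-zero-ring} in part (2). A small improvement over the paper's write-up is that you avoid asserting that a sequential colimit of objects with homotopy left unital multiplications again carries one (which in general requires compatibility of the multiplications); you instead observe directly that the canonical map $\1 \to E \otimes A_\infty$ is null and use compactness to descend this to a finite stage $M \ge N$, where the hypothesized structure on $E \otimes A_M$ suffices.
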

\begin{proof}
(1) Let us show that $E \wedge A_{n+1} \wequi 0$ and $E \wedge B_{n+1} \wequi 0$ together imply $E \wedge A_n \wequi 0$.
This will imply the result by induction.
We have the cofiber sequence \[ L_{n+1}^{-1} \wedge E \wedge A_{n} \xrightarrow{x_{n+1} \wedge \id_E \wedge \id_{A_{n}}} E \wedge A_{n} \to \1/x_{n+1} \wedge E \wedge A_{n} \wequi E \wedge A_{n+1} \wequi 0, \] from which we conclude that $x_n \wedge \id_E \wedge \id_{A_{n}}$ is an equivalence.
By definition, $E \wedge B_{n+1} = E \wedge A_{n} \wedge \1[x_{n+1}^{-1}]$ is obtained as the filtered colimit of \[ E \wedge A_{n} \xrightarrow{x_{n+1}' \wedge \id_E \wedge \id_{A_{n}}} L_{n+1} \wedge E \wedge A_{n} \xrightarrow{\id_{L_{n+1}} \wedge x_{n+1}' \wedge \id_E \wedge \id_{A_{n}}} L_{n+1}^{\otimes 2} \wedge E \wedge A_{n} \to \dots, \] where \[ x_{n+1}' = \id_{L_{n+1}} \wedge x_{n+1}: \1 \wequi L_{n+1} \wedge L_{n+1}^{-1} \to L_{n+1} \wedge \1 \wequi L_{n+1}. \]
Consequently this is a filtered colimit along equivalences, and the colimit (which is $0$ by assumption) is equivalent to any one object.
Hence $E \wedge A_n \wequi 0$, as desired.

(2) Let $C, C' \in \scr C$ have homotopy left unital multiplications.
Then so does $C \otimes C'$.
If $C_1, C_2, \dots \in \scr C$ have homotopy left unital multiplications then so does $C_\infty := \colim_i C_i$.
In particular, $C_\infty \wequi 0$ if and only if $C_n \wequi 0$ for some $n$ (using that $\1$ is compact and Lemma \ref{lemm:detect-abstract}).

We deduce that $A_n, A_\infty, E \wedge A_n, E \wedge A_\infty$ have homotopy left unital multiplications, and $E \wedge A_n \wequi 0$ for $n$ sufficiently large.
If $n \le N$ then the unit $\1 \to E \wedge A_N$ factors as $\1 \to E \wedge A_n \to E \wedge A_N$.
Since the middle term is zero we conclude that $E \wedge A_N \wequi 0$ as desired.
We may thus assume that $n > N$.
We have reduced to (1).
This concludes the proof.
\end{proof}

\subsection{Detection in $\MGL$-modules}
In this subsection we assume again that all spectra are $e$-periodized, where $\ell \ne e$ is the exponential characteristic of $k$.
\begin{lemma} \label{lemm:final}
Let $E \in \SH(k)$ be a homotopy left unital ring spectrum with $E \wedge \H\Z/\ell = 0$ and also $E \wedge \MGL[v_{i+1}^{-1}]/(\ell, v_1, \dots, v_i) = 0$ for all $i \ge 0$.
\begin{enumerate}
\item If $k$ is $\ell$-good, then $E \wedge \MGL/\ell \wequi 0$.
\item In general, there exists a finite separable extension $k'/k$ such that $E \wedge \MGL/\ell|_{k'} \wequi 0$.
\end{enumerate}
\end{lemma}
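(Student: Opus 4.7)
The plan is to apply Lemma \ref{lemm:detect-abstract}(2) in the symmetric monoidal $\infty$-category $\scr C = \MGL_{(\ell)}\Mod$, with $N = 1$ and the sequence of maps in $\pi_{**}\MGL_{(\ell)}$ given by $x_1 = \ell$ and $x_{i+1} = v_i$ for $i \ge 1$. Under this indexing, $A_n = \MGL_{(\ell)}/(\ell, v_1, \dots, v_{n-1})$ and $B_{n+1} = \MGL_{(\ell)}[v_n^{-1}]/(\ell, v_1, \dots, v_{n-1})$. Taking $F := E \wedge \MGL_{(\ell)}$, which inherits a homotopy left unital multiplication from $E$, the desired conclusion $E \wedge \MGL/\ell \wequi 0$ becomes $F \wedge_{\MGL_{(\ell)}} A_1 \wequi 0$. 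Note that the unit $\MGL_{(\ell)} \in \scr C$ is compact, so it remains only to verify conditions (i) and (ii) of Lemma \ref{lemm:detect-abstract}(2) and to supply homotopy left unital multiplications on the $A_n$.

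For part (1), each $A_n$ (for $n \ge 1$) carries a homotopy left unital multiplication by Lemma \ref{lemm:final-2}(1), applicable since $k$ is $\ell$-good and since modulo $\ell$ the spectra $\MGL_{(\ell)}$ and $\MGL_\ell^\wedge$ agree. For condition (i), Lemma \ref{lemm:hopkins-morel-modified}(4) combined with further reduction mod $\ell$ identifies $A_\infty$ with $\H\Z/\ell[T_i \mid i \ne \ell^n - 1]$, which is a wedge of shifts of $\H\Z/\ell$; the hypothesis $E \wedge \H\Z/\ell \wequi 0$ then yields $F \wedge_{\MGL_{(\ell)}} A_\infty \wequi 0$. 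For condition (ii), unwinding the definitions shows $F \wedge_{\MGL_{(\ell)}} B_{n+1} \wequi E \wedge \MGL[v_n^{-1}]/(\ell, v_1, \dots, v_{n-1})$, which vanishes for all $n \ge 1$ by the standing hypothesis (set $i = n - 1$). Lemma \ref{lemm:detect-abstract}(2) then concludes $E \wedge \MGL/\ell \wequi 0$.

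For part (2), I would reduce to part (1) by a two-step base change argument. First, Lemma \ref{lemm:final-2}(2) with $n = 1$ and $x_1 = \ell$ produces a finite separable extension $k_1/k$ over which $\MGL/\ell$ admits a homotopy left unital $\MGL$-algebra structure; thus $F_1 := E|_{k_1} \wedge \MGL/\ell|_{k_1}$ is a homotopy left unital ring spectrum. Let $\overline{k_1}$ be a separable closure of $k_1$, which is $\ell$-good. Since the hypotheses on $E$ are stable under base change, part (1) applied over $\overline{k_1}$ gives $F_1|_{\overline{k_1}} \wequi 0$; in particular the unit map $\MGL_{(\ell)}|_{k_1} \to F_1$ is null after base change to $\overline{k_1}$. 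Using compactness of $\MGL_{(\ell)}$ in $\SH(k_1)$ together with continuity of $\SH$ under essentially smooth base change (cf.\ \cite[Lemma A.7]{hoyois-algebraic-cobordism}), this unit must already be null over some finite separable subextension $k'/k_1 \subset \overline{k_1}$. Applying Lemma \ref{lemm:detect-zero-ring} to the homotopy left unital ring $F_1|_{k'}$ then yields $F_1|_{k'} \wequi 0$.

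The main obstacle in this plan is not really conceptual — the bulk of the argument is an assembly of the pieces already developed in the paper. The only genuinely delicate step is the continuity descent in part (2): one needs the nullity of the unit map over the separable closure to descend to a finite subextension, and this hinges on compactness of the unit in $\SH(k_1)$ and standard essentially smooth base change for $\SH$.
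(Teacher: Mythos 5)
Your proof is essentially correct and follows the same overall strategy as the paper, with two minor variations worth noting.

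For part (1), you invoke Lemma~\ref{lemm:detect-abstract}(2) directly in $\MGL_{(\ell)}\Mod$, whereas the paper carries out the compactness step by hand (using compactness of $\1$ to find a single large $N$ with $\1 \to E \wedge A_N$ null, then a ring structure on just that one $A_N$, then applies Lemma~\ref{lemm:detect-zero-ring} and finishes via Lemma~\ref{lemm:detect-abstract}(1)). Over an $\ell$-good field these routes are interchangeable since every $A_n$ then carries a left unital multiplication by Lemma~\ref{lemm:final-2}(1). For part (2), the paper runs the same uniform argument and simply dispatches to Lemma~\ref{lemm:final-2}(2) to supply the single needed ring structure on $A_N$ over a finite extension; you instead first arrange a ring structure on $A_1 = \MGL/\ell$ over $k_1$, apply part (1) over $\overline{k_1}$, and descend the nullity of the unit via continuity. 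Both work; your route re-derives, in a slightly different place, essentially the same continuity descent that the paper has already packaged in the proof of Lemma~\ref{lemm:final-2}(2), so the paper's version is a bit more economical. One small slip: ``compactness of $\MGL_{(\ell)}$ in $\SH(k_1)$'' is not correct as stated --- $\MGL_{(\ell)}$ is not compact in $\SH(k_1)$. What you want is compactness of the monoidal unit: either $\1$ in $\SH(k_1)$, so that $\pi_0(F_1|_{(\ph)})$ commutes with the filtered colimit of finite subextensions of $\overline{k_1}$, or equivalently $\MGL_{(\ell)}$ viewed as the unit of $\MGL_{(\ell)}\Mod$, which is compact in that module category. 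With that correction the argument goes through.
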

\begin{proof}
By Lemma \ref{lemm:hopkins-morel-modified}(4) we know that $E \wedge \MGL/(\ell, v_1, \dots) \wequi 0$.
It follows that the unit map $\1 \to E \wedge \MGL/(\ell, v_1, \dots, v_N)$ is null, for $N$ sufficiently large.
By Lemma \ref{lemm:final-2}, possibly after replacing $k$ by a finite separable extension, we may assume that $\MGL/(\ell, v_1, \dots, v_N)$ has a homotopy left unital multiplication.
Hence by Lemma \ref{lemm:detect-zero-ring} we know that $E \wedge \MGL/(\ell, v_1, \dots, v_N) \wequi 0$.
Thus we conclude by Lemma \ref{lemm:detect-abstract}(1).
\end{proof}

\subsection{Detecting zero normed spectra}
\begin{lemma} \label{lemm:detect-zero-normed}
Let $K/k$ be a (not necessarily algebraic) separable field extension and $E \in \NAlg(\SH(k))$ with $E|_{K} \wequi 0$.
Then $E \wequi 0$.
\end{lemma}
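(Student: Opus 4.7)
The plan is to reduce from a general separable extension $K/k$ to a finite separable one using continuity of $\SH$ along cofiltered limits, and then to exploit the multiplicative norm intrinsic to the normed structure on $E$.

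First I would handle the reduction to a finite separable subextension. By Popescu's Néron desingularization, the separable $k$-algebra $K$ is a filtered colimit of smooth finitely generated $k$-subalgebras $\{A_\alpha\}$. Continuity of $\SH$ under cofiltered limits of qcqs schemes with affine transition maps (as in \cite[Lemma A.7(1)]{hoyois-algebraic-cobordism}) then gives $\SH(\Spec K) \wequi \colim_\alpha \SH(\Spec A_\alpha)$, so the vanishing $E|_K \wequi 0$ is witnessed at a finite stage: there is some $\alpha$ with $E|_{\Spec A_\alpha} \wequi 0$, since a null-homotopy of $\id_{E|_K}$ in the colimit of mapping spectra must be realized in one of the $\SH(\Spec A_\alpha)$. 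Set $X := \Spec A_\alpha$, a nonempty smooth affine $k$-scheme of finite type, and pick a closed point $x \in X$. Smoothness of $X/k$ (equivalently geometric regularity) forces the residue field $L := \kappa(x)$ to be finite and separable over $k$: if $\kappa(x)/k$ had any inseparability, then $\kappa(x) \otimes_k \bar k$ would have nilpotents, contradicting regularity of $X \times_k \bar k$. Restricting along $\Spec L \hookrightarrow X$ yields $E|_{\Spec L} \wequi 0$.

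Second I would conclude via the norm along the finite étale map $f : \Spec L \to \Spec k$. The $\NAlg$-structure on $E$ of \cite{bachmann-norms} supplies a symmetric monoidal functor $f_\otimes$ together with a multiplication map $f_\otimes f^* E \to E$, inducing a unit-preserving multiplicative homomorphism $\N_{L/k} : E^0(\Spec L) \to E^0(\Spec k) = \pi_0(E)$ satisfying $\N_{L/k}(1) = 1$ and $\N_{L/k}(0) = 0$ — both identities are entirely parallel to Lemma \ref{lemm:norm-props}(2), relying only on the facts that $f_\otimes$ preserves the zero object (the zero morphism factors through $0$, which $f_\otimes$ sends to $0$) and the unit (since $f$ is finite étale of degree one on units). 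Since $E|_L \wequi 0$, we have $0 = 1$ in $E^0(\Spec L)$, so applying $\N_{L/k}$ gives $0 = 1$ in $\pi_0(E)$, and Lemma \ref{lemm:detect-zero-ring} forces $E \wequi 0$.

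The main technical point requiring care is the continuity argument: one needs to confirm that the vanishing of an object in the colimit $\colim_\alpha \SH(\Spec A_\alpha)$ is detected at a finite stage, which follows from standard principles about mapping spaces in filtered colimits of presentable $\infty$-categories once one rewrites $E|_K \wequi 0$ as the equality of the identity and zero in $\pi_0 \map_{\SH(\Spec K)}(E|_K, E|_K)$. Once that is in place, the remainder is a direct invocation of the unit- and zero-preserving multiplicative norm intrinsic to normed spectra.
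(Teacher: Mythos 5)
Your route is genuinely different from the paper's. You invoke Popescu (N\'eron) desingularization to realize $\operatorname{Spec} K$ as the cofiltered limit of nonempty smooth affine finite-type $k$-schemes, pass to a closed point with finite separable residue field, and finish with the norm. The paper instead reduces, via continuity of $\pi_0(E|_{(-)})$ along filtered colimits of fields, to a finitely generated subextension, then splits the remaining reduction into simple purely transcendental steps (handled by homotopy invariance and unramifiedness of homotopy sheaves, the reason for citing \cite[Theorem 6.2.7 and Lemma 6.4.4]{morel2005stable}) and a finite separable step (handled by the norm, as you do). Your version avoids the transcendental case entirely and is conceptually tighter, at the price of invoking N\'eron--Popescu, a much heavier input than the paper's elementary field-level reduction.

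There are, however, two gaps. The more serious one: the claim that smoothness of $X/k$ forces \emph{every} closed point $x \in X$ to have $\kappa(x)/k$ finite separable is false, and the justification offered (regularity of $X \times_k \bar k$) does not deliver it. Counterexample: $k = \F_p(t)$, $X = \A^1_k = \operatorname{Spec} k[y]$; the closed point cut out by the irreducible polynomial $y^p - t$ has residue field $k(t^{1/p})$, purely inseparable of degree $p$ over $k$, while $X \times_k \bar k \cong \A^1_{\bar k}$ is of course regular. Regularity of the base change is a condition on the local rings of $X \times_k \bar k$, not on $\operatorname{Spec}(\kappa(x) \otimes_k \bar k)$, which is a closed subscheme rather than an open one. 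What you actually need, and what is true, is that a nonempty smooth finite-type $k$-scheme has \emph{some} (in fact a dense set of) closed points with finite separable residue field; this is standard (reduce \'etale-locally to $\A^n_k$, where for $k$ infinite it follows from density of $k$-points in a nonempty open, and for $k$ finite it is automatic since finite fields are perfect).

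Second, your continuity step asks a null-homotopy of $\id_{E|_K}$ to descend to a finite stage, i.e.\ a statement about $\colim_\alpha \map_{\SH(\operatorname{Spec} A_\alpha)}(E,E)$; but \cite[Lemma A.7]{hoyois-algebraic-cobordism} concerns mapping spaces \emph{out of compact objects}, and $E$ is not assumed compact. The correct move (and the paper's) is to test $1 = 0$ in $\pi_0(E|_K) = [\1, E|_K]$: since $\1$ is compact, $\pi_0(E|_K) \cong \colim_\alpha \pi_0(E|_{\operatorname{Spec} A_\alpha})$, so $1 = 0$ at a finite stage, whence Lemma \ref{lemm:detect-zero-ring} gives $E|_{\operatorname{Spec} A_\alpha} \wequi 0$. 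With these two repairs your argument is sound.
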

\begin{proof}
It is enough to show that $1=0 \in \pi_0(E)$.
Since the functor $K \mapsto \pi_0(E|_K)$ preserves filtered colimits \cite[Lemma A.7(1)]{hoyois-algebraic-cobordism}, we may assume that $K/k$ is finitely generated.
We may further reduce (by induction) to either a simple purely transcendental extension or a finite separable extension.
For the simple purely transcendental case, we use that \[ \pi_0(E) = \ul{\pi}_{0,0}(E)(k) \wequi \ul{\pi}_{0,0}(E)(k[x]) \hookrightarrow \ul{\pi}_{0,0}(E)(k(x)) = \pi_0(E|_{k(x)}), \] i.e. homotopy invariance and unramifiedness of homotopy sheaves of motivic spectra \cite[Theorem 6.2.7 and Lemma 6.4.4]{morel2005stable}.
Finally if $l/k$ is finite separable and $E|_l \wequi 0$ then the norm $N_{l/k}(E|_l) \to E$ \cite[\S7.2]{bachmann-norms} is a ring map with vanishing source and hence vanishing target.
This concludes the proof.
\end{proof}

\subsection{Proof of the main theorem}
We first prove a version of our main theorem over fields.
\begin{theorem} \label{thm:mainfinal}
Let $k$ be a field, $\ell$ a prime invertible in $k$, and $E \in \NAlg(\SH(S))$.
Suppose that $\ell^n = 0 \in \pi_0(E)$ for some $n$, and also that \[ E \wedge \H\Z/\ell \wequi 0. \]
Then also \[ E \wedge \MGL/\ell \wequi 0. \]
\end{theorem}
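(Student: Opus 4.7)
The plan is to reduce the theorem to a separable closure of $k$, where Corollary \ref{cor:vanishing-smash-product} applies cleanly, and then to descend along the extension using the normed structure. A crucial observation enabling this strategy is that, under the hypothesis $\ell^n = 0 \in \pi_0(E)$, the conclusion $E \wedge \MGL/\ell \wequi 0$ is equivalent to the a priori stronger statement $E \wedge \MGL \wequi 0$: the element $\ell^n$ acts as zero on the ring spectrum $E \wedge \MGL$, so if in addition $\ell$ acted as an equivalence (which is precisely what $E \wedge \MGL/\ell \wequi 0$ means) then the identity of $E \wedge \MGL$ would be null. This reformulation is valuable because $E \wedge \MGL$ is itself a normed spectrum, to which Lemma \ref{lemm:detect-zero-normed} applies.

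Let $\bar k$ be a separable closure of $k$. Because $\ell$ is invertible in $k$, the field $\bar k$ contains all $\ell^m$-th roots of unity, and $\cd_\ell(\bar k) = 0$, so $\bar k$ is $\ell$-good, hence adequate. All hypotheses on $E$ persist after base change to $\bar k$. For each integer $N \ge 1$, let $R_N := \MGL_{(\ell)}[v_N^{-1}]^\wedge_{(\ell, v_1, \ldots, v_{N-1})}$ be the normed spectrum of Section \ref{sec:cohom-homology}. The smash product $E_{\bar k} \wedge R_N$ lies in $\NAlg(\SH(\bar k))_{R_N/}$, and since $\ell^n = 0 \in \pi_0(E)$ the same holds in $\pi_0(E_{\bar k} \wedge R_N)$ via the unit map. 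Corollary \ref{cor:vanishing-smash-product} then yields $E_{\bar k} \wedge R_N \wequi 0$, and consequently $E_{\bar k} \wedge \MGL[v_N^{-1}]/(\ell, v_1, \ldots, v_{N-1}) \wequi 0$ for every $N \ge 1$.

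Now Lemma \ref{lemm:final}(1) applies over $\bar k$: this field is $\ell$-good, $E_{\bar k}$ is homotopy left unital because normed, the hypothesis $E_{\bar k} \wedge \H\Z/\ell \wequi 0$ is the base change of the given one, and the remaining vanishings $E_{\bar k} \wedge \MGL[v_{i+1}^{-1}]/(\ell, v_1, \ldots, v_i) \wequi 0$ for all $i \ge 0$ are precisely those just established. We deduce $E_{\bar k} \wedge \MGL/\ell \wequi 0$, hence by the opening equivalence $E_{\bar k} \wedge \MGL \wequi 0$. Since $E \wedge \MGL \in \NAlg(\SH(k))$ is normed and its base change to the separable extension $\bar k/k$ vanishes, Lemma \ref{lemm:detect-zero-normed} concludes $E \wedge \MGL \wequi 0$ over $k$, and in particular $E \wedge \MGL/\ell \wequi 0$.

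The main obstacle is the vanishing $E_{\bar k} \wedge R_N \wequi 0$ at every height $N$, which is essentially the content of Corollary \ref{cor:vanishing-smash-product} and depends on the genuinely normed (rather than merely $\scr E_\infty$) structure of $E$. The $\ell^n$-torsion hypothesis plays a double role: it provides the required input to Corollary \ref{cor:vanishing-smash-product}, and it is also what upgrades the mod-$\ell$ vanishing produced by Lemma \ref{lemm:final} to integral vanishing of the normed spectrum $E \wedge \MGL$, allowing the final descent via Lemma \ref{lemm:detect-zero-normed} without needing to handle non-normed objects.
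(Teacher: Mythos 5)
Your proof is correct and uses the same key ingredients as the paper's (Corollary \ref{cor:vanishing-smash-product}, Lemma \ref{lemm:final}, Lemma \ref{lemm:detect-zero-normed}), but with a cleaner organization worth noting. The paper stays over $k$ and, for each $i$, passes to a finite separable extension $k_i$ to apply Corollary \ref{cor:vanishing-smash-product}, descends via Lemma \ref{lemm:detect-zero-normed}, and then applies Lemma \ref{lemm:final}(2) (the version requiring a further finite extension) with one more descent at the end. You instead base change once to the separable closure $\bar k$, observe that it is $\ell$-good hence adequate at every height, apply Corollary \ref{cor:vanishing-smash-product} and then Lemma \ref{lemm:final}(1) (the $\ell$-good case) entirely over $\bar k$, and descend just once using the fact that Lemma \ref{lemm:detect-zero-normed} allows infinite separable extensions. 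This sidesteps the paper's aside about base change along finite extensions commuting with completion (you form each $R_N$ directly over $\bar k$, so no such compatibility is needed) and replaces the paper's reduction to $(E \wedge \MGL)_\ell^\wedge \wequi 0$ with the equivalent reformulation $E \wedge \MGL \wequi 0$, which is slightly more convenient since $E \wedge \MGL$ is visibly a normed spectrum to which the descent lemma can be applied directly. Both approaches are valid; yours buys a shorter argument at no cost, given that Lemma \ref{lemm:detect-zero-normed} was already stated for arbitrary separable extensions.
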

\begin{proof}
It is necessary and sufficient to show that $(E \wedge \MGL)_\ell^\wedge \wequi 0$.
Let \[ R_i = \MGL_{(\ell)}[v_i^{-1}]^{\wedge}_{(\ell,v_1,v_2,\cdots,v_{i-1})}. \]
For each $i$, there exists a finite separable extension $k_i/k$ containing a primitive $\ell$-th root of unity which is adequate (in the sense of Definition \ref{def:adequate}) for $R_i$.
Since base change along a finite separable extension commutes with completion (being a right adjoint), $R_i$ is stable under finite separable base change, and hence\NB{Or: $R_i \wedge E|_{k_i}$ is complete since $E$ is torsion.} $R_i \wedge E|_{k_i} \wequi 0$ by Corollary \ref{cor:vanishing-smash-product}.
Thus by Lemma \ref{lemm:detect-zero-normed} we find that $R_i \wedge E \wequi 0$, and hence $\MGL[v_{i+1}^{-1}]/(\ell, v_1, \dots, v_i) \wedge E \wequi 0$ for all $i$.
By Lemma \ref{lemm:final}, there exists a further finite separable extension $k'/k$ such that $E \wedge \MGL/\ell|_{k'} \wequi 0$, whence $(E \wedge \MGL)_\ell^\wedge|_{k'} \wequi 0$.
We conclude by a final application of Lemma \ref{lemm:detect-zero-normed}.
\end{proof}

One may prove that normed spectra are stable under arbitrary base change; this is explained in the forthcoming work \cite{bachmann-splitting}.
Using this, the above easily implies our result over more general bases.
\begin{theorem}
Let $S$ be a noetherian scheme of finite dimension, $\ell$ a prime invertible on $S$, and $E \in \NAlg(\SH(S))$.
Suppose that $\ell^n = 0 \in \pi_0(E)$ for some $n$, and also that \[ E \wedge \H\Z/\ell \wequi 0. \]
Then also \[ E \wedge \MGL/\ell \wequi 0. \]
\end{theorem}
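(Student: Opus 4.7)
The plan is to reduce to the field-level Theorem \ref{thm:mainfinal} by invoking the standard detection principle that, for $S$ noetherian of finite Krull dimension, the family of pullback functors $i_s^* \colon \SH(S) \to \SH(k(s))$ indexed by all points $s \in S$ is jointly conservative. This principle follows from Nisnevich descent (reducing vanishing in $\SH(S)$ to vanishing on henselian local stalks), combined with continuity of $\SH$ along cofiltered affine limits of schemes and noetherian induction on $\dim S$ via the open/closed localization triangle.

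The first step is to verify that the hypotheses of Theorem \ref{thm:mainfinal} are preserved under pullback. For each point $s \in S$ with structure map $i_s \colon \Spec k(s) \to S$, the functor $i_s^*$ preserves the normed structure, so $E|_{k(s)} \in \NAlg(\SH(k(s)))$. The relation $\ell^n = 0 \in \pi_0(E)$ passes to $\pi_0(E|_{k(s)})$ since $i_s^*$ is a ring map on $\pi_0$. Crucially, Spitzweck's $\H\Z$, and hence $\H\Z/\ell$, is stable under arbitrary base change, so
\[ E|_{k(s)} \wedge \H\Z/\ell \wequi (E \wedge \H\Z/\ell)|_{k(s)} \wequi 0. \]
Applying Theorem \ref{thm:mainfinal} over the field $k(s)$, in which $\ell$ is invertible by hypothesis on $S$, then yields $E|_{k(s)} \wedge \MGL/\ell \wequi 0$. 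Since $\MGL/\ell$ is likewise stable under base change, this gives $(E \wedge \MGL/\ell)|_{k(s)} \wequi 0$ for every $s \in S$, and joint conservativity of the $i_s^*$ concludes $E \wedge \MGL/\ell \wequi 0$ in $\SH(S)$.

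The main obstacle is the invocation of the detection principle, which is the only nontrivial external input; the remainder is a routine check that the hypotheses of the field-case theorem are inherited by pullback to residue fields. If one prefers to avoid ``joint conservativity'' as a black box, the same reduction can be carried out directly by Nisnevich descent together with noetherian induction on $\dim S$, the inductive step being controlled by the open/closed localization triangle $j_\# j^* \to \id \to i_* i^*$ for the inclusion of a generic point in a closed irreducible subset.
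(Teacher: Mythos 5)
Your proposal is essentially the paper's own proof: the paper likewise reduces to the field case (Theorem~\ref{thm:mainfinal}) by citing conservativity of pullback to residue fields \cite[Corollary 14]{bachmann-real-etale}, compatibility of the normed structure with base change, and base-change stability of $\MGL$ (and, implicitly, of Spitzweck's $\H\Z$). The only difference is that you sketch why the conservativity principle holds via Nisnevich descent and noetherian induction, whereas the paper simply cites it; both routes are correct.
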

\begin{proof}
Pullback to fields is conservative \cite[Corollary 14]{bachmann-real-etale}, formation of normed spectra is compatible with arbitrary base change \cite{bachmann-splitting}, and $\MGL$ is stable under base change (since the Grassmannians it is built out of are).
\end{proof}

\begin{theorem} \label{thm:mainIIfinal}
Let $S$ be a noetherian scheme of finite dimension, and write $\scr S$ for the set of primes not invertible on $S$.
Let $E \in \NAlg(\SH(S))$ and suppose that \[ E \wedge \H\Z[\scr S^{-1}] \wequi 0. \]
Then also \[ E \wedge \MGL[\scr S^{-1}] \wequi 0. \]
\end{theorem}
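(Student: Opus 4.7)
The plan is to reduce Theorem~\ref{thm:mainIIfinal} to Theorem~\ref{thm:main} by an arithmetic fracture argument on the target $E \wedge \MGL[\scr S^{-1}]$. Vanishing of this spectrum is equivalent to the conjunction of rational vanishing and mod-$\ell$ vanishing for each prime $\ell$; the primes $\ell \in \scr S$ are handled automatically because $\Z[\scr S^{-1}]/\ell = 0$, so one reduces to (a) $E \wedge \MGL \otimes \Q \wequi 0$ and (b) $E \wedge \MGL/\ell \wequi 0$ for each prime $\ell$ invertible on $S$ (noting that $\MGL[\scr S^{-1}]/\ell \wequi \MGL/\ell$ for such $\ell$, since $\ell$-torsion spectra are unaffected by inverting primes in $\scr S$).

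For (a), one uses that the rational spectrum $\MGL \otimes \Q$ is a module over $\H\Q$, via the formal group law logarithm that gives a splitting $\MGL_\Q \wequi \bigvee_i \Sigma^{2i,i}\H\Q$. Hence $E \wedge \MGL \otimes \Q$ is an $E \wedge \H\Q$-module. Since $E \wedge \H\Z[\scr S^{-1}] \wequi 0$, tensoring with $\Q$ yields $E \wedge \H\Q \wequi 0$, and (a) follows.

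For (b), I would apply Theorem~\ref{thm:main} with the given prime $\ell$. The homological hypothesis $E \wedge \H\Z/\ell \wequi 0$ follows from $E \wedge \H\Z/\ell \wequi E \wedge \H\Z[\scr S^{-1}]/\ell \wequi 0$, since $\ell \notin \scr S$. The remaining obstacle, which I expect to be the main difficulty, is supplying the torsion hypothesis $\ell^n = 0 \in \pi_0(E)$, which is not assumed. My strategy is to replace $E$ by the normed $\ell$-complete spectrum $\tilde E := E \wedge \MGL[\scr S^{-1}]_\ell^\wedge$, which is normed by Corollary~\ref{cor:loc-MGL} (and whose $\ell$-completion preserves the normed structure because $\N_{L/K}(\ell) = \ell^{[L:K]}$, triggering the criterion of the opening proposition of \S\ref{sec:completion-locn-MGL}). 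Although $\tilde E$ is $\ell$-complete rather than $\ell$-torsion on $\pi_0$, a Nakayama-type argument for $\ell$-complete spectra should reduce vanishing of $\tilde E$ to vanishing of $\tilde E/\ell \wequi E \wedge \MGL/\ell$.

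Breaking the apparent circularity here is where the subtlety lies. One route is to enhance the norm-trick of Corollary~\ref{cor:vanishing-smash-product} by iterating it through successive powers of $\ell$ modulo the $\ell$-adic filtration; another is to exploit the intrinsic $\ell$-torsion of $\MGL/\ell$ itself (which satisfies $\ell^N \cdot \id = 0$) so that the key identity $\N(\ell^k) \equiv -\ell^{k-1}\Tr(1) \pmod{\ell^k}$ can be used with the element $\ell^k$ acting as zero on the relevant $\MGL/\ell$-module, rather than literally being zero in $\pi_0(E)$. Either of these routes should suffice to deduce $E \wedge \MGL/\ell \wequi 0$, completing the proof.
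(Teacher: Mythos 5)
Your overall fracture strategy is the right shape, and part (a) (rational vanishing via $\MGL_\Q$ being an $\H\Q$-algebra) matches the paper exactly.  But the gap you flagged at the torsion hypothesis is genuine, and the two workarounds you sketch do not close it.  Your proposed replacement $\tilde E := E \wedge \MGL[\scr S^{-1}]_\ell^\wedge$ fails precisely because $\ell$-\emph{completion} does not make $\pi_0$ into an $\ell$-power-torsion group (an $\ell$-complete ring can have $\ell$ a non-zero-divisor, e.g.\ $\Z_\ell$), so the hypothesis $\ell^n = 0 \in \pi_0$ of Theorem~\ref{thm:main} is simply not available for $\tilde E$, and the ``Nakayama for completions'' step reduces its vanishing back to the very statement $E \wedge \MGL/\ell \wequi 0$ you are trying to prove.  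Your second route, exploiting the intrinsic $\ell^N$-torsion of $\MGL/\ell$, is blocked for a different reason: $E \wedge \MGL/\ell$ is only an $E$-module, not a normed spectrum (quotients such as $\MGL/\ell$ are not normed), so Theorem~\ref{thm:main} cannot be applied to it.

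The paper resolves this with a cleaner move than either of your suggestions: replace $E$ by $E \wedge \MGL_{(\ell)}$, the $\ell$-\emph{localization} rather than the $\ell$-completion.  This is still a normed spectrum, and it suffices to show it vanishes (it is an $\MGL[\scr S^{-1}]$-module, hence a retract of $E \wedge \MGL_{(\ell)} \wedge \MGL[\scr S^{-1}]$).  Now the key point: $E \wedge \MGL_{(\ell)}$ is $\ell$-local, so inverting $\ell$ is the same as rationalizing, and by your part (a) the rationalization vanishes; therefore $(E \wedge \MGL_{(\ell)})[1/\ell] \wequi 0$.  Taking $\pi_0$ of this telescope forces $\ell^n \cdot 1 = 0 \in \pi_0(E \wedge \MGL_{(\ell)})$ for some $n$, which is exactly the missing torsion hypothesis.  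Theorem~\ref{thm:main} then gives $E \wedge \MGL_{(\ell)} \wedge \MGL/\ell \wequi 0$; combined with $\ell^n = 0$ on $\pi_0$, one deduces $E \wedge \MGL_{(\ell)} \wedge \MGL \wequi 0$ (since $E \wedge \MGL_{(\ell)} \wedge \MGL$ is then a retract of $E \wedge \MGL_{(\ell)} \wedge \MGL/\ell^n$, and all $\MGL/\ell^N$ smashed with it vanish once $\MGL/\ell$ does), hence $E \wedge \MGL_{(\ell)} \wequi 0$.  In short: the step you're missing is that rational vanishing of an $\ell$-\emph{local} normed ring already produces the torsion condition for free, so no enhancement of the norm trick or appeal to the torsion of $\MGL/\ell$ is needed.
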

\begin{proof}
It suffices to show that for every $\ell \not\in \scr S$ we have $E \wedge \MGL_{(\ell)} \wequi 0$, whence replacing $E$ by $E \wedge \MGL_{(\ell)}$ we may assume that $E$ is an $\ell$-local $\MGL$-module.
Under this assumption $E[1/\ell] \wequi E_\Q \wequi E \wedge \H\Q \wequi 0$, and hence $\ell^n = 0 \in \pi_0(E)$ for some $n$.
Since also $E \wedge \H\Z/\ell \wequi 0$, the theorem implies that $E \wedge \MGL/\ell \wequi 0$.
From this we deduce that also $E \wedge \MGL/\ell^N \wequi 0$ for all $N$.
Finally since $\ell^n = 0$ in $E$, $E \wedge \MGL$ is a wedge summand of $E \wedge \MGL/\ell^n$, which is zero.
This concludes the proof.
\end{proof}

\bibliographystyle{alpha}
\bibliography{bibliography}
\end{document}